\documentclass[12pt]{amsart}
\usepackage{geometry}
\usepackage{graphicx, amsfonts, amssymb}
\usepackage{mathrsfs, amsmath, amsthm}
\usepackage{verbatim}
\usepackage{relsize}
\usepackage{epsfig}
\usepackage{xcolor}

\newcommand{\hol}{\rm{Hol}}
\DeclareMathOperator{\og}{O}

\newcommand{\defeq}{\stackrel{\text{def}}{=}}
\def\D{{\mathbb D}}

\def\({\left(}       \def\){\right)}

\newtheorem{theorem}{Theorem}

\newtheorem{corollary}{Corollary}
\newtheorem{proposition}{Proposition}
\theoremstyle{definition}

\numberwithin{equation}{section}
\theoremstyle{theorem}
\newtheorem{other}{\bf Theorem}              

\newenvironment{Pf}{\noindent{\emph{Proof of}}}{$\Box$ }
 \DeclareMathOperator{\ogr}{O}

\begin{document}
\title[Rhaly and Generalized Hilbert operators ]
{Rhaly and Generalized Hilbert operators between spaces of analytic functions }

\author[P.~Galanopoulos]{Petros Galanopoulos}
 \address{Department of Mathematics,
Aristotle University of Thessaloniki, 54124, Thessaloniki, Greece}
\email{petrosgala@math.auth.gr}
\author[D.~Girela]{Daniel Girela}
 \address{An\'alisis Matem\'atico,
Universidad de M\'alaga, Campus de Teatinos, 29071 M\'alaga, Spain}
\email{girela@uma.es}

\subjclass[2020]{Primary 30H10; 47B91; 42B30}
\keywords{Hardy spaces, Bergman spaces, The
Ces\`{a}ro operator, Rhaly operators, Generalized Hilbert operators, Mean Lipschitz spaces}
\begin{abstract}
In this article we address 
the question of characterizing the boundedness of Rhaly and generalized Hilbert operators when acting between distinct
spaces of analytic functions in the unit disc
$\mathbb D$.
 Let  $(\eta )=\{ \eta_n\}_{n=0}^\infty $ be a sequence of complex numbers such that the $g(z)=g_{(\eta)}(z)=\sum_{n=0}^{\infty} \eta_n z^n$ is  analytic in $\mathbb D$. Assume that $f(z)=\sum_{k=0}^{\infty} a_k z^k$ belongs to the Hardy space $ H^1 $ then the associated Rhaly operator
is defined 
as
$$
\mathcal R_{(\eta )}(f)(z)=\sum_{n=0}^{\infty} \eta_n \sum_{k=0}^n a_k\, z^n\,. \quad z\in \mathbb D\,.
$$
 In addition, 
 the corresponding  generalized Hilbert operator is defined 
 as
$$
\mathcal H_{g}(f)(z)=\sum_{n=0}^{\infty} (n+1) \eta_{n+1} \sum_{k=0}^{\infty} \frac{a_k}{n+k+1}\, z^n=\int_0^1 f(t) g'(tz)\,dt\,, \quad z\in \mathbb D\,.
$$
We prove that the boundedness of $\mathcal R_{(\eta )} : H^p \to H^q$  is equivalent to that of $\mathcal H_{g} : H^p \to H^q$, when $1<p<q\leq 2$ or when $1<p\leq 2 <q$, and it is characterized exactly by the membership of the function $g$
 in the mean Lipschitz space $\Lambda ^q_{1/p}$.
 The case $1<p=q \leq 2$ was obtained previously by the authors.
 Our result confirms in a strong way the bond between these two operators.
 \par On the other hand, this condition is no longer true when the two operators act between distinct Bergman spaces.
  Specifically, if $2<p<q <\infty$ then
  we prove that  $\mathcal R_{(\eta)} : A^p \to A^q$ boundedly if and only if $g\in \Lambda^q_{2/p-1/p} $. Pelaez and Rattya prove that the same holds for the $\mathcal H_{g}$. The $2<p<q<\infty$ 
   is the natural range of values for the $\mathcal H_{g}$ to be well defined on $A^p$. This difference on the action of the two operators between  Hardy and  Bergman spaces is surprising compared to the case  $p=q$ where the $g\in \Lambda ^p_{1/p}$ is the exact condition for both operators to be bounded either in  Hardy or in  Bergman spaces.
\par Furthermore, we deal with the action $\mathcal R_{\eta} : D^p_{\alpha} \to D^p_{\beta}$ of the Rhaly operator between weighted Dirichlet spaces extending previous work of O.Blasco with the authors for the case $p=2$. 
Finally, we answer completely the question about the action of $\mathcal H_{g}$ between spaces of analytic functions whose derivative is in a Hardy space. To be precise, we prove that $\mathcal H_{g} : S^p \to S^q$ is bounded if and only it is compact which in its turn is equivalent to  $g\in S^q$  when $ 1\leq p<\infty,\, 1<q<\infty$. According to a previous result of the authors the latter is the case for the Rhaly operator too.


\end{abstract}

 \thanks{This research has been supported in part by a grant from \lq\lq El Ministerio de
Ciencia e Innovaci\'{o}n\rq\rq , Spain Project PID2022-133619NB-I00
and by grant from la Junta de Andaluc\'{\i}a
FQM-210-G-FEDER)}\maketitle

\section{Introduction and main results}\label{intro}
\par\smallskip Let
$\mathcal S$ be the space of all complex sequences $(a)=\{ a_k\}
_{k=0}^\infty \subset \mathbb C$. If $(\eta )\,=\,\{
\eta_n\}_{n=0}^\infty \in \mathcal S$, the Rhaly operator $\mathcal
R_{(\eta )}\,=\,\mathcal R_{\{ \eta _n\} }
:\mathcal
S\,\rightarrow\,\mathcal S$ is defined as follows: If $\{
a_k\}_{k=0}^\infty \in \mathcal S$, then
$$\mathcal R_{(\eta )}(\{ a_k\}_{k=0}^\infty )\,=\,\left \{
\eta_n\sum_{k=0}^na_k\right \}_{n=0}^\infty .$$ Taking $(\eta )=\{
\frac{1}{n+1} \} $ the operator $\mathcal R_{(\eta )}$ reduces to
the classical Ces\`{a}ro operator $\mathcal C$. Thus Rhaly operators
are a natural generalization of the Ces\`{a}ro operator which is
known to be bounded on $\ell^p$ for $1<p<\infty $ \cite{H-20,HLP,L}.
The sequences $(\eta)=\{ \eta_n\}_{n=0}^\infty $ for which the operator
$\mathcal R_{(\eta) }$ is either bounded or compact on the
space $\ell^p$ for any $p\in (1, \infty )$ have been characterized
in \cite{GGPr}. A characterization of the sequences $(\eta )=\{ \eta_n\}$
for which $\mathcal R_{(\eta) }$ belongs to the Schatten class
$\mathcal S^q(\ell^2)$ ($1<q<\infty $) has been obtained in
\cite{BDS}.
\par\medskip
Particular cases of Rhaly operators of special interest are those
where the sequence $(\eta )=\{ \eta _n\} $ is the sequence of moments of a
complex Borel measure $\mu $ on $\mathbb D$: If $\mu $ is a complex
Borel measure in $\mathbb D$ and, for $n=0, 1, 2, \dots $, $\mu_n$
denotes the moment of order $n$ of $\mu $, that is,
$\mu_n\,=\,\int_{\mathbb D}w^n\,d\mu (w)$,\,($n=0, 1, 2, \dots $),
the operator $\mathcal R_{\{\mu _n\} }$ will be denoted by $\mathcal
C_{\mu }$. When $\mu $ is the Lebesgue measure on the radius $[0,
1)$ the operator $\mathcal C_{\mu }$ is the Ces\`{a}ro operator
$\mathcal C$.
\par \medskip The generalized Hilbert operator  introduced by the sequence $(\eta )\,=\,\{
\eta_n\}_{n=0}^\infty \in \mathcal S$ 
 on the space $ \mathcal S$ is
the formally defined operator
$$
\{ a_k\}_{k=0}^\infty \longrightarrow  \left \{
(n+1) \eta_{n+1} \sum_{k=0}^{\infty} \frac{a_k}{n+k+1} \right \}_{n=0}^\infty .
$$
 In order to be consistent to the notation commonly used in the literature for the generalized Hilbert operators
 we will denote
 the sequence that introduce the operator as $g=\,\{
\eta_n\}_{n=0}^\infty$ and therefore
the associated generalized Hilbert operator on $ \mathcal S$  as
$$
\mathcal H_{g}(\{ a_k\}_{k=0}^\infty)=\left \{
(n+1) \eta_{n+1} \sum_{k=0}^{\infty} \frac{a_k}{n+k+1} \right \}_{n=0}^\infty\,.
$$
Under particular conditions on the $\{ a_k\}
_{k=0}^\infty$ the $\mathcal H_g$ becomes well defined. For example, this is true when $\{ a_k\}
_{k=0}^\infty \in l^p \,(1< p<\infty)$.  If we choose the sequence $g=\{\frac{1}{n} \}_{n=1}^{\infty}$
 then the operator $\mathcal H_{g}$ reduces to
the Hilbert operator. This is the operator introduced by the most important representative of the Hankel matrices that is the Hilbert matrix
 $\mathcal H=[c_{n,k}]_{n,k \geq 0}=[\frac{1}{n+k+1}]_{n,k \geq 0}$ by multiplication of $\mathcal H$ with the column matrix with entries the terms of the sequence $\{a_k\}_{k=0}^\infty$. Thus, from this point of view, the generalized Hilbert  operators
are a natural extension of the Hilbert operator which is
known to be bounded on $\ell^p,\,1<p<\infty $ \cite{HLP}\,. 
\par Although the formulas of definition of $\mathcal R_{(\eta)}$ and $\mathcal H_g$ are different it seems that there is a strong connection between them. This can be realized initially through their action on the sequence spaces. Despite the fact that in \cite{GGPr}  it is not written down a concrete proof for the boundedness of $\mathcal H_{g}$ on $l^p$ spaces, looking  carefully at the proof presented there about $\mathcal R_{(\eta )}$ we realize that in fact it is established the following.
\begin{other}\label{RHglp}
Let $1<p<\infty$ and $g=(\eta)=\,\{\eta_n\}_{n=0}^\infty \in \mathcal S$. The $\mathcal R_{(\eta )} : l^p \to l^p$ is bounded if and only if the $\mathcal H_{g} :l^p \to l^p$ is bounded.
 \end{other}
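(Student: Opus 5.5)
The plan is to show that both boundedness statements are equivalent to one explicit condition on $(\eta)$, namely
$$
M_p(\eta)\defeq\sup_{N\ge 1} N^{1-1/p}\Big(\sum_{n\ge N}|\eta_n|^p\Big)^{1/p}<\infty .
$$
For $\mathcal R_{(\eta)}$ this is the characterization obtained in \cite{GGPr}. It is also the discrete Muckenhoupt condition for the weighted Hardy inequality $\sum_n|\eta_n|^p\big(\sum_{k\le n}|a_k|\big)^p\le C\sum_k|a_k|^p$, so it can be rederived directly if needed. It therefore suffices to prove that $\mathcal H_g$ is bounded on $\ell^p$ if and only if $M_p(\eta)<\infty$.

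\emph{Necessity.} Fix $N$ and test both operators on $a^{(N)}$ with $a_k=N^{-1/p}$ for $k\le N$ and $a_k=0$ otherwise, so that $\|a^{(N)}\|_{\ell^p}\asymp 1$. For $\mathcal R_{(\eta)}$ the $n$-th entry of the image is $\eta_n (N+1)N^{-1/p}$ for every $n\ge N$. For $\mathcal H_g$ and $n\ge N$, the coefficients $a_k$ are nonnegative and $\sum_{k\le N}\frac1{n+k+1}\asymp N/n$. Hence the $n$-th entry has modulus $\asymp |\eta_{n+1}|N^{1-1/p}$. Taking $\ell^p$ norms over $n\ge N$ gives $N^{1/p'}\big(\sum_{n>N}|\eta_n|^p\big)^{1/p}\lesssim\|\mathcal H_g\|$. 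Thus $M_p(\eta)<\infty$ in both cases, up to a harmless index shift.

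\emph{Sufficiency for $\mathcal H_g$.} Assume $M=M_p(\eta)<\infty$. It is enough to treat $|a_k|$, and I split the sum at $k=n$. For the part with $k\le n$, the bound $\frac{n+1}{n+k+1}\le 1$ dominates the entry by $|\eta_{n+1}|\sum_{k\le n}|a_k|$. This is the Rhaly operator of the shifted sequence $\{\eta_{n+1}\}$ acting on $\{|a_k|\}$. The shifted sequence still satisfies the condition, so this part is bounded by the Rhaly case.

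For the part with $k>n$, the bound $\frac1{n+k+1}\le\frac1{k+1}$ gives the entry $\le (n+1)|\eta_{n+1}|\sum_{k>n}\frac{|a_k|}{k+1}$. This is a dual Hardy (tail) operator. I will prove the inequality
$$
\sum_n w_n\Big(\sum_{k>n}|a_k|/(k+1)\Big)^p\lesssim\sum_k|a_k|^p,\qquad w_n=(n+1)^p|\eta_{n+1}|^p,
$$
via the discrete Muckenhoupt criterion for the adjoint Hardy operator. The criterion requires
$$
\sup_N\Big(\sum_{n\le N}w_n\Big)^{1/p}\Big(\sum_{k\ge N}(k+1)^{-p'}\Big)^{1/p'}<\infty .
$$
The second factor is $\asymp N^{-1/p}$, so I need $\sum_{n\le N}n^p|\eta_n|^p\lesssim N$. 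I get this dyadically: on the block $2^j\le n<2^{j+1}$, the condition gives $\sum|\eta_n|^p\le M^p2^{-j(p-1)}$. Hence the block contributes at most $2^{(j+1)p}M^p2^{-j(p-1)}\lesssim M^p2^j$. Summing over $2^j\le N$ yields $\lesssim M^pN$.

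The main obstacle is this second part. The tempting pointwise bound $|\eta_n|\le M n^{-1/p'}$ only leads to the operator $n^{1/p}\sum_{k>n}|a_k|/k$, which is \emph{not} bounded on $\ell^p$; the test sequence $a_k=k^{-1/p-\varepsilon}$ shows this. So one must keep the averaged tail information in the weight and use the Muckenhoupt-type criterion, as above, rather than a Schur test with the pointwise bound. Combining the two parts gives $\|\mathcal H_g\|_{\ell^p\to\ell^p}\lesssim M_p(\eta)$, which completes the equivalence.
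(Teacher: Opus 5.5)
Your proof is correct, and it takes a genuinely different, more explicit route than the paper. The paper writes out no proof of this statement. It only asserts that the argument of \cite{GGPr} for $\mathcal R_{(\eta)}$, read carefully, also establishes the result for $\mathcal H_g$.

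You instead make the common mechanism explicit: both operators are governed by the single tail condition $M_p(\eta)<\infty$.
\begin{itemize}
\item \emph{Necessity.} It comes from the normalized block sequences. The lower bound $\sum_{k\le N}\frac{1}{n+k+1}\ge \frac{N+1}{2n+1}$ for $n\ge N$ is all you need there.
\item \emph{The index shift.} It is indeed harmless. The quantity $\eta_0$ never enters $\mathcal H_g$, and it is irrelevant to boundedness of $\mathcal R_{(\eta)}$.
\item \emph{Sufficiency for $\mathcal H_g$.} You split the matrix along the diagonal. The lower part is dominated entrywise by the Rhaly matrix of the shifted sequence $\{\eta_{n+1}\}$. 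The strictly upper part is a weighted dual Hardy operator. You verify its Muckenhoupt constant correctly: with $v_k=(k+1)^p$ one has $v_k^{1-p'}=(k+1)^{-p'}$, and the dyadic bound $\sum_{n\le N}n^p|\eta_n|^p\lesssim M^pN$ follows from the tail condition.
\end{itemize}

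What your route buys is a self-contained, quantitative statement,
\[
\|\mathcal R_{(\eta)}\|_{\ell^p\to\ell^p}\asymp\|\mathcal H_g\|_{\ell^p\to\ell^p}\asymp M_p(\eta),
\]
up to the trivial contribution of $\eta_0$. It also isolates the one nontrivial point: the upper triangular part of $\mathcal H_g$ needs the averaged tail information, not merely the pointwise consequence $|\eta_n|\lesssim n^{-1/p'}$. Your counterexample $a_k=k^{-1/p-\varepsilon}$ shows this correctly.

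The cost is reliance on the standard discrete Muckenhoupt (Bennett) criteria for the Hardy and dual Hardy operators. The paper's justification is shorter, but it leaves the reader to reconstruct the argument from \cite{GGPr}.
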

\par\medskip The next natural step in the study of the Rhaly and the generalized Hilbert operators  is to consider them on analytic functions.  Let $\mathbb D=\{ z\in \mathbb C: \vert z\vert <1\} $
be the unit disc in the complex plane and let $\hol (\mathbb D)$ be
the space of analytic functions in $\mathbb D$.
If $(\eta )=\{ \eta _n\}_{n=0}^\infty \in \mathcal S$
then we can view the $\mathcal
R_{(\eta )}$ as an operator acting on analytic
functions in the disc: If $f\in \hol (\mathbb D)$, $f(z)=\sum
_{k=0}^\infty a_k z^k $ ($z\in \mathbb D$), formally, we set
$$\mathcal R_{(\eta )}(f)(z)\,=\,
\sum_{n=0}^\infty \eta _n\left
(\sum_{k=0}^na_k\right )z^n,\quad z\in\mathbb D,$$ whenever the
right hand side makes sense and defines an analytic function in
$\mathbb D$. Notice that the Ces\`{a}ro operator
$$\mathcal C(f)(z)=\sum_{n=0}^\infty \left
(\frac{1}{n+1}\sum_{k=0}^na_k\right )z^n
$$
 and the more general operators
$$\mathcal C_\mu
(f)(z)=\sum_{n=0}^\infty \left (\mu _n\sum_{k=0}^na_k\right
)z^n
$$
become linear operators from $\hol (\mathbb D)$ into itself.
 We recall that the Ces\`{a}ro operator is bounded on the Hardy spaces $H^p$ ($0<p<\infty $) and on the Bergman spaces $A^p $ ($0<p<\infty $). 
 This has been proved
over the years using different methods \cite{Andersen, Mi, Nowak,
    Sis-1987, Sis-1990, Sis-1996, Stem}.
 The boundedness of $\mathcal C_\mu $ on spaces of analytic functions in
$\mathbb D$ has been studied by several authors (see, e.\,\@g.,
\cite{BaoSWu, BBJ, Blasco-2024, Bl-Ma, GGMM, JT, T}).
\par\medskip The sequence $(\eta )=\{ \eta _n\}_{n=0}^\infty$ may not be
bounded. Hence, contrary to what happens with the operators
$\mathcal C$ and  $\mathcal C_\mu $, it is not obvious that
$\mathcal R_{(\eta )}$ is  well defined  from $\hol (\mathbb D)$
into itself. Proposition~1 of \cite{Bl-Ga-Gi} asserts that if $X$ is
a subspace of $\hol (\mathbb D)$ which contains the constants, then
$\mathcal R_{(\eta )}$ is a well defined linear operator from $X$
into $\hol (\mathbb D)$ if and only if the power series
$\sum_{n=0}^\infty \eta _nz^n$ defines an analytic function in
$\mathbb D$. From now on, if the power series $\sum_{n=0}^\infty
\eta _nz^n$ defines an analytic function in $\mathbb D$ we will
denote it as
$$g(z)=
\,\sum_{n=0}^\infty \eta
_nz^n,\quad z\in \mathbb D.$$
\par In a similar way we can view $\mathcal H_{g}$
as an operator acting on analytic
functions in the disc: if $g(z)=\,\sum_{n=0}^\infty \eta
_nz^n \in \hol (\mathbb D)$ then we  formally set
$$\mathcal H_{g}(f)(z)\,
=\sum_{n=0}^\infty (n+1)\eta _{n+1}\left
(\sum_{k=0}^{\infty} \frac{a_k}{n+k+1}\right )z^n,\quad z\in\mathbb D,$$
 whenever the
right hand side makes sense and defines an analytic function in
$\mathbb D$. As it is established in \cite{GGPSi} this is true when $f(z)=\sum
_{k=0}^\infty a_k z^k $  runs in the Hardy space $H^1$ or in the Bergman spaces $ A^p$ when $p>2$.
  Furthermore,  in \cite{GGPSi} it is proved that
  \begin{equation}\label{series=integral}
 \mathcal H_{g}(f)(z)=\int_0^1 f(t)\,g'(tz)\,dt \,,\,\,\,z\in \mathbb D\,.
\end{equation}
where the integral stated above is absolutely convergent when $f\in H^1$ or $f\in A^p (p>2)$.

As in the case of the Rhaly operators, the choice  $g(z)=\,\sum_{n=0}^\infty \frac{1}{n+1} z^n $ leads to a known operator. Here is the Hilbert. 
  In particular, relation (\ref{series=integral})  can be written as
$$
\mathcal H(f)(z)\,
=\sum_{n=0}^\infty \left
(\sum_{k=0}^{\infty} \frac{a_k}{n+k+1}\right )z^n = \int_0^1 \frac{f(t)}{1-tz} dt\, ,\quad z\in\mathbb D\,,
$$
whenever $f\in H^1$ or $f\in A^p (p>2)$.
The term Hilbert operator is justified by the fact that coefficients of the series is the outcome of the multiplication of the Hilbert matrix with the one column matrix with entries the Taylor coefficients of $f$.
 The boundedness of $\mathcal H$ on Hardy and Bergman spaces was established in \cite{DiSis}, \cite{Di}.
At this point it is worth of mentioning that a broader  version of the Hilbert  operator acting on $\hol(\mathbb  D)$ has attracted the attention of the researchers as well. This  corresponds to the operators introduced by the
Hankel matrices $ \mathcal H_{\mu}=[\mu_{n+k}]_{n,k\geq 0}$ where $\mu_n=\int_0^1 t^n d\mu(t)$ are the moments of a  positive Borel measure $\mu $ on $[0,1)$. If $\mu $ is the Lebesgue measure then we get the Hilbert matrix.
About the action of $\mathcal H_{\mu}$ on the Hardy and the Bergman spaces see \cite{AGG}, \cite{BaoS}, \cite{GP}, \cite{GPCh}.
\par Taking into account all the aforementioned results about the action of  particular versions of the  Rhaly and generalized Hilbert operators on the Hardy and Bergman spaces and Theorem \ref{RHglp}
 we are allowed to expect that their boundedness and compactness on these spaces should be characterized by the same condition on the function $g(z)=\sum_{n = 0}^{\infty} \eta_n z^n \in \hol(\mathbb D)$. This is actually the case.
In view of the identification of $H^2$ and $\ell^2$, the sequences $(\eta )$ for which $\mathcal R_{(\eta )}$ is bounded
on $H^2$ are characterized in \cite{GGPr}.
In \cite{GG-2026JGA},\cite{Bl-Ga-Gi2} it is provided the exact condition in order the $\mathcal R_{\eta}$ to be bounded (compact) on $H^p (1<p\leq 2), A^p (1<p<\infty).$ The answer about $\mathcal H_g$
on $H^p (1<p\leq 2)$ and $A^p (2<p<\infty)$ was given in \cite{GGPSi}.
 We will present these results  in  a unified way. For simplicity we will adopt the following notation. If X and Y are two Banach spaces, $\mathcal B(X, Y )$ will be the space of continuous linear operators from X into Y. Also $\mathcal B(X)$ will stand for $\mathcal B(X, X)$. The compact operators will be denoted as $\mathcal K(X,Y)$, $\mathcal K(X)$.
 In general, we say that $g\in \Lambda^p_{\alpha}$, $1<p< \infty,\, 0<\alpha <1$ if and only if
 $$
 M_p(r, g^\prime )\,=\,\og \left
 (\frac{1}{(1-r)^{1-\alpha}}\right ),\quad
 \text{ as $r\to 1$,}
 $$
 If a ``little oh" condition holds then we say that $g\in\lambda^p_{\alpha}\,.$
  Summing up, we state the following theorems.
  \begin{other}\label{summing up}
    Let $g(z)=\sum_{n=0}^\infty \eta_n z^n \in \hol(\mathbb D)$. If  $1<p\leq 2$ then the following are equivalent\\
    $(i)$ $\mathcal R_{\eta} \in \mathcal B(H^p) \,\,( \mathcal K(H^p))$\\
    $(ii) $ $\mathcal H_{g}\in \mathcal B(H^p)\,\, ( \mathcal K(H^p))$\\
    $(iii)$ $g\in \Lambda^p_{\frac 1p}\,\, (\lambda^p_{\frac 1p})$
    \end{other}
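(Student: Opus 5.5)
This theorem collects results from \cite{GG-2026JGA}, \cite{Bl-Ga-Gi2} and \cite{GGPSi}, so the proof could simply cite them. The plan below shows how the equivalences fit together, routing both operators through condition $(iii)$ and using the integral representation (\ref{series=integral}) together with a parallel representation for $\mathcal R_{(\eta)}$.

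\emph{Representation of $\mathcal R_{(\eta)}$.} For $f(z)=\sum a_kz^k$ the partial sums $s_n=\sum_{k\le n}a_k$ are the Taylor coefficients of $f(z)/(1-z)$. Hence $\mathcal R_{(\eta)}(f)$ is the Hadamard product $g\ast \frac{f}{1-z}$, which can be written as
$$\mathcal R_{(\eta)}(f)(z)=\frac{1}{2\pi}\int_0^{2\pi}\frac{f(e^{i\theta})}{1-e^{i\theta}}\,g(ze^{-i\theta})\,d\theta$$
after the usual regularisation at $\theta=0$.

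\emph{Sufficiency for $\mathcal R_{(\eta)}$, $(iii)\Rightarrow(i)$.} I would compute $z\,\mathcal R_{(\eta)}(f)'$ and estimate its integral means. The mean Lipschitz condition $M_p(r,g')=\og((1-r)^{1/p-1})$ bounds the Hadamard product with $g'$. For $1<p\le 2$, the Hausdorff--Young inequality and Hardy's inequality (a Hardy-type estimate for the averaging operator) should give
$$M_p\bigl(r,(\mathcal R_{(\eta)}f)'\bigr)\lesssim \|f\|_{H^p}\,(1-r)^{-1}\omega(1-r),$$
with $\omega(t)\to0$ in the little-oh case. Combining this with Littlewood--Paley type inequalities, which hold in the right direction when $p\le2$, yields boundedness; the little-oh version yields compactness by approximating with the dilations $g_r$.

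\emph{Sufficiency for $\mathcal H_g$, $(iii)\Rightarrow(ii)$.} I would use (\ref{series=integral}) and Minkowski's integral inequality:
$$M_p(r,\mathcal H_g f)\le\int_0^1|f(t)|\,M_p(tr,g')\,dt\lesssim\int_0^1|f(t)|(1-t)^{1/p-1}\,dt\lesssim\|f\|_{H^p}.$$
The last step is Fej\'er--Riesz combined with H\"older, or equivalently the fact that $(1-t)^{1/p-1}dt$ behaves like a Carleson-type measure for $H^p$. A sharper version is needed to obtain the full range.

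\emph{Necessity, $(i)\Rightarrow(iii)$ and $(ii)\Rightarrow(iii)$.} I would test both operators on the functions $f_a(z)=(1-a)^{1-1/p}/(1-az)$ with $a\in(0,1)$, whose $H^p$ norms are uniformly bounded. For $\mathcal H_g$, positivity of $1/(1-at)$ on $[0,1]$ localises the integral near $t\sim a$, so $\|\mathcal H_g f_a\|_{H^p}$ dominates roughly $(1-a)^{1-1/p}M_p(a,g')$; this forces $g\in\Lambda^p_{1/p}$. For $\mathcal R_{(\eta)}$, the partial sums of $f_a$ are explicit, $s_n=(1-a^{n+1})/(1-a)$ up to normalisation, so $\mathcal R_{(\eta)}f_a$ is a combination of $g$ and $g(az)$. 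Differentiating and using boundedness gives the same growth for $M_p(r,g')$. For compactness, the $f_a$ tend weakly to $0$ as $a\to1$, which yields the little-oh condition.

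\emph{Main obstacle.} The hardest step is the sufficiency direction for $\mathcal R_{(\eta)}$, since the Rhaly operator has no simple integral kernel involving $g'$. I would first try to write $\mathcal R_{(\eta)}f$ as a Ces\`aro-type average of $\mathcal H$-like pieces. Failing that, I would work in a dyadic block decomposition of $g$ and use the $\ell^p$ characterisation of Theorem \ref{RHglp} to link the two operators blockwise.
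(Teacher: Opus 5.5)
The paper gives no proof of this statement. It is quoted as a summary of \cite{GG-2026JGA}, \cite{Bl-Ga-Gi2}, \cite{GGPSi}, so your opening citation is all the paper does. The sketch you offer in its place, however, fails in both sufficiency directions.

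For $\mathcal H_g$, the inequality $\int_0^1|f(t)|(1-t)^{1/p-1}\,dt\lesssim\|f\|_{H^p}$ is false for every $p>1$. H\"older with Fej\'er--Riesz produces exactly the divergent $\int_0^1(1-t)^{-1}\,dt$. Concretely, $f(z)=(1-z)^{-1/p}\bigl(\log\frac{e}{1-z}\bigr)^{-\beta}$ with $1/p<\beta\le 1$ lies in $H^p$, while $\int_0^1|f(t)|(1-t)^{1/p-1}\,dt=\int_0^\infty(1+v)^{-\beta}\,dv=\infty$. Now take $g(z)=\log\frac{1}{1-z}$, which gives the (bounded) Hilbert operator and satisfies $M_p(t,g')\asymp(1-t)^{1/p-1}$. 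For this $g$ and $f$, your Minkowski bound $\sup_r\int_0^1|f(t)|M_p(tr,g')\,dt$ is already infinite. Putting the norm inside the $t$-integral along $[0,1]$ destroys the cancellation, and no sharpening of the last step repairs it for any $p$.

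What works is dyadic; see the proof of Theorem~\ref{Hg Hardy}(i) in the case $1<p<q\le 2$, which runs verbatim with $q=p$. Lemma~7 of \cite{GGPSi} gives $\|\Delta_{2^n}(\mathcal H_{g'}(Sf))\|_{H^p}\lesssim\bigl(\int_0^1t^{2^{n-1}+1}|f(t)|\,dt\bigr)\|\Delta_{2^n}(g'')\|_{H^p}$. Then (\ref{Dpp-1}), (\ref{Dpp-1 dyadic}) and Theorem~\ref{th:mlip}(vi) reduce everything to $\sum_n(n+1)^{p-2}\bigl(\int_0^1t^n|f(t)|\,dt\bigr)^p\lesssim\|\tilde{H}(f)\|^p_{H^p}\lesssim\|f\|^p_{H^p}$. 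This is Hardy's coefficient inequality (valid for $p\le 2$) applied to the positive Hilbert-type operator $\tilde{H}$, and it is where $p\le 2$ really enters.

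For $\mathcal R_{(\eta)}$, an estimate of the form $M_p(r,(\mathcal R_{(\eta)}f)')\lesssim\|f\|_{H^p}(1-r)^{-1}\omega(1-r)$ cannot give boundedness. Inserted into (\ref{Dpp-1}) it leaves $\int_0^1\omega(1-r)^p\,\frac{dr}{1-r}$. This diverges when $\omega$ is merely bounded (the big-oh case, e.g.\ the Ces\`aro operator), and even in the little-oh case unless $\omega^p$ is Dini integrable. You have to keep $f$ inside the radial integral. From $(S\circ\mathcal R_{(\eta)}f)'=D(g)\star\frac{f}{1-z}$ and (\ref{Mprsquare}) you get $\|\mathcal R_{(\eta)}f\|^p_{H^p}\lesssim\int_0^1M_1^p\bigl(r,\frac{f}{1-z}\bigr)\,dr$. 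Then H\"older gives $M_1\bigl(r,\frac{f}{1-z}\bigr)\lesssim(1-r)^{-1/(\gamma p)}M_{\gamma p}(r,f)$ with $\gamma>1$, and Theorem~\ref{HardyLittle} with $\lambda=p$ (exponent $-1/\gamma$) finishes. This is the second case of the proof of Theorem~\ref{Rhaly Hardy}(i) with $q=p$. So your ``main obstacle'' is inverted: the Rhaly direction is the easy one, and Theorem~\ref{RHglp} plays no role for $p\neq 2$.

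On necessity, your Rhaly argument is sound and differs from the paper's multiplier argument. You have $\mathcal R_{(\eta)}f_a=(1-a)^{-1/p}\bigl(g-ag(a\,\cdot)\bigr)$ and $g=\mathcal R_{(\eta)}(1)\in H^p$, and $\|g-g(a\,\cdot)\|_{H^p}=\og((1-a)^{1/p})$ characterizes $\Lambda^p_{1/p}$ because $1/p<1$. For $\mathcal H_g$, however, positivity of $1/(1-at)$ yields no lower bound on $\|\int_0^1f_a(t)g'(tz)\,dt\|_{H^p}$, since $g'(tz)$ is complex-valued and moves with $t$. There you must pass to Taylor coefficients. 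The factor $N^{-1/p'}\sum_k\frac{a_N^k}{n+k+1}$ is $\asymp N^{-1/p'}$ with small variation on $[N,2N]$, and you invert it blockwise with Theorem~\ref{LP-Zy}.
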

\begin{other}\label{summing up2}
    Let $g(z)=\sum_{n=0}^\infty \eta_n z^n \in \hol(\mathbb D)$.
     Assume that $1<p <\infty$ then the following are equivalent\\
    $(i)$ $\mathcal R_{\eta} \in \mathcal B(A^p) \,\,( \mathcal K(A^p))$\\
    $(ii)$ $g\in \Lambda^p_{\frac 1p}\,\, (\lambda^p_{\frac 1p})$\\
\end{other}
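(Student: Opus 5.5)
The idea is to turn $\mathcal R_{(\eta)}$ into a multiplication-type operator composed with something bounded on $A^p$. Writing $f(z)=\sum_k a_kz^k$, the partial sums satisfy $\sum_{n}\big(\sum_{k\le n}a_k\big)z^n=\frac{f(z)}{1-z}$. Hence $\mathcal R_{(\eta)}(f)$ is the Hadamard product of $g$ with $f(z)/(1-z)$. Equivalently,
$$\mathcal R_{(\eta)}(f)(z)=\frac{1}{2\pi}\int_0^{2\pi} g(ze^{-i\theta})\,\frac{f(e^{i\theta}r)}{1-re^{i\theta}}\,d\theta$$
with suitable radii. A more useful form comes from differentiating. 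One has
$$\big(z\mathcal R_{(\eta)}(f)\big)'=\sum_n (n+1)\eta_n s_n z^n,$$
which can be rewritten, after summation by parts in $n$, as a combination of terms like $\sum_n\eta_n a_n z^n$ and $\sum_n (n+1)(\eta_n-\eta_{n+1})s_n z^n$. I would instead work with the Littlewood--Paley/area description of the $A^p$ norm and the standard fact that $g\in\Lambda^p_{1/p}$ is equivalent to the coefficient-multiplier-type estimate $M_p(r,g')\lesssim(1-r)^{1/p-1}$.

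For sufficiency ((ii)$\Rightarrow$(i)), I would try to dominate $M_p(r,\mathcal R_{(\eta)}(f)')$ by an integral expression involving $M_p(\rho, g')$ and $M_1$- or $M_\infty$-type means of $f$. Since $s_n=\sum_{k\le n}a_k$ are the coefficients of $f/(1-z)$, the Hadamard product with $g$ at radius $r=\rho^2$ gives
$$\mathcal R_{(\eta)}(f)'(r^2 e^{it})=\frac1{2\pi}\int g'(\rho e^{i(t-\theta)})\,\frac{f(\rho e^{i\theta})}{1-\rho e^{i\theta}}\,e^{-i\theta}\,d\theta .$$
I then apply Minkowski/Young. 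The kernel $1/(1-\rho e^{i\theta})$ combined with $f$ is awkward, so the better route is to split off $f(1)$-type behaviour. I write $\frac{f(z)}{1-z}=\frac{f(z)-f(\rho')}{1-z}+\cdots$, or more cleanly use the known pointwise estimate $|f(w)|\lesssim \|f\|_{A^p}(1-|w|)^{-2/p}$ together with a weighted Young inequality. Concretely, I aim for
$$M_p(r^2,\mathcal R_{(\eta)}(f)')\lesssim M_p(\rho,g')\,M_1\Big(\rho,\tfrac{f}{1-z}\Big).$$
Bounding $M_1(\rho,f/(1-z))$ via Hölder by $M_p(\rho,f)\,\|1/(1-\rho e^{i\theta})\|_{p'}\approx M_p(\rho,f)(1-\rho)^{-1/p}$ yields
$$M_p(r^2,\mathcal R_{(\eta)}(f)')\lesssim M_p(\rho,f)\,(1-r)^{-1}.$$
Integrating $M_p^p$ against $(1-r)^{p}\,dr$ then gives $\|\mathcal R_{(\eta)}f\|_{A^p}\lesssim \|f\|_{A^p}$ via the derivative characterization of $A^p$. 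The $1-r$ powers match exactly because the exponent $1/p$ is the critical one. The little-oh case gives compactness by the usual argument: the constant becomes $o(1)$ near the boundary, applied to weakly null sequences, which converge uniformly on compacta.

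For necessity ((i)$\Rightarrow$(ii)), I test on $f_a(z)=\frac{(1-a)^{\gamma}}{(1-az)^{\gamma+2/p}}$ with $a\in(0,1)$, which are uniformly bounded in $A^p$. Since the partial sums of coefficients of $f_a$ are comparable to $(1-a)^{\gamma-1}\cdot$(coefficients of $(1-az)^{-\gamma-2/p+1}$) up to lower-order terms, $\mathcal R_{(\eta)}(f_a)$ behaves like $(1-a)^{-1+2/p}$ times a Hadamard product of $g$ with a kernel concentrated on indices $n\lesssim 1/(1-a)$. Evaluating the $A^p$ norm on the disc $|z|\le a$ and using a standard mean-value/subharmonicity step recovers $M_p(a,g')\lesssim (1-a)^{1/p-1}$.

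The main obstacle is that the partial sums of $f_a$ do not factor exactly; the term $f_a(1)/(1-z)$ dominates. To handle it I would split
$$\frac{f_a(z)}{1-z}=\frac{f_a(z)-f_a(1)}{1-z}+\frac{f_a(1)}{1-z},$$
where the second piece gives $f_a(1)\,g$ and $|f_a(1)|\approx(1-a)^{-2/p}$. So boundedness forces $\|g\|_{A^p}$-type control at scale $a$, which after differentiation (via the standard equivalence between growth of $M_p(r,g)$ and $M_p(r,g')$ for these exponents) gives $\Lambda^p_{1/p}$. Making this rigorous, and in particular controlling the first piece uniformly, is where I expect the real work. For compactness, the analogous weakly null family yields the little-oh condition.
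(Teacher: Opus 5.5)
Your sufficiency half is fine and is essentially the paper's route. It uses the Hadamard-product identity $(S\circ\mathcal R_{\eta}(f))'=D(g)\star F$ with $F=f/(1-z)$; differentiating $\mathcal R_{\eta}(f)$ itself, as you do, only costs a factor $r$ in your convolution formula. It then uses the bound $M_p(r^2,h_1\star h_2)\le M_p(r,h_1)M_1(r,h_2)$, H\"older's $M_1(r,F)\lesssim(1-r)^{-1/p}M_p(r,f)$, and the derivative form of the $A^p$ norm. Splitting at a radius $r_0$ for compactness is a valid substitute for the paper's finite-rank approximation via Theorem \ref{partialsumlamba}.

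The necessity half, however, has a genuine gap. Put $N\approx 1/(1-a)$ and let $s_n=\sum_{k\le n}c_k$, where $c_k$ are the Taylor coefficients of $f_a$. Then $s_n\uparrow f_a(1)=(1-a)^{-2/p}$, while $s_n\ll f_a(1)$ for $n\ll N$. So, contrary to your heuristic, relative to $g$ the output $\mathcal R_{\eta}(f_a)$ lives on frequencies $n\gtrsim N$. In your splitting $\mathcal R_{\eta}(f_a)=f_a(1)\,g+g\star\frac{f_a-f_a(1)}{1-z}$, the coefficients of the first piece are $-(f_a(1)-s_n)\approx -f_a(1)$ for $n\ll N$. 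That piece is therefore essentially $-f_a(1)$ times a smoothed partial sum of $g$ at level $N$. Both pieces have $A^p$ norm of order $(1-a)^{-2/p}$, and they cancel on $n\lesssim N$. Hence the first piece cannot be controlled uniformly, and estimating the pieces separately yields nothing.

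Your last step fails as well. Bounds on $M_p(r,g)$ or on $\|g\|_{A^p}$ can never give the positive-order smoothness $\Lambda^p_{1/p}$: every $g\in H^p$ has bounded $M_p(r,g)$. The Hardy--Littlewood equivalence between the growth of $M_p(r,g)$ and of $M_p(r,g')$ holds only for negative orders. Nor does subharmonicity on $|z|\le a$ give $M_p(a,g')$ directly. Relative to the coefficients $n\eta_n$ of $g'$, the coefficients of $\mathcal R_{\eta}(f_a)$ carry the factor $s_n/n\asymp N^{2/p-1}(n/N)^{\gamma+2/p-1}$ for $n\ll N$. This is not comparable to a constant at low frequencies (unless $\gamma=1-2/p$). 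In any case, removing such a factor needs a coefficient multiplier theorem, which your plan never invokes.

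The missing idea is to localize to a single dyadic block and divide out the partial sums there. Take $N=2^\nu$ and $a=1-1/N$; your test functions work, and the paper uses $\gamma=2/p'$, i.e.\ $f_N=N^{-2/p'}(1-a_Nz)^{-2}$. On $N\le n<2N$ they satisfy $s_n\asymp N^{2/p}$ and $\sum_{N\le n<2N}|s_{n+1}-s_n|\le f_a(1)\lesssim N^{2/p}$. So $\lambda_n=1/s_n$ for $N\le n<2N$, and $\lambda_n=0$ otherwise, meets the hypotheses of Theorem \ref{LP-Zy} with $M\asymp N^{-2/p}$. Applied to the polynomial $\Delta_N(\mathcal R_{\eta}(f_a))$, this gives $\|\Delta_N g\|_{H^p}\lesssim N^{-2/p}\|\Delta_N(\mathcal R_{\eta}(f_a))\|_{H^p}$. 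Combine this with $\|\Delta_N h\|_{H^p}\asymp N^{1/p}\|\Delta_N h\|_{A^p}$ and the uniform boundedness of $\Delta_N$ on $A^p$ for $1<p<\infty$. The result is $\|\Delta_{2^\nu} g\|_{H^p}\lesssim 2^{-\nu/p}\|\mathcal R_{\eta}\|$, which is $g\in\Lambda^p_{1/p}$ by Theorem \ref{th:mlip}(iv). Under compactness, $\|\mathcal R_{\eta}(f_a)\|_{A^p}\to 0$, and the same computation gives the little-oh condition. This is precisely the argument the paper uses in the proof of Theorem \ref{Rhaly Bergman}, taken with $q=p$.
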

\begin{other}\label{summing up3}
    Let $g(z)=\sum_{n=0}^\infty \eta_n z^n \in \hol(\mathbb D)$. If $2<p<\infty$ then the following are equivalent\\
        $(i)$ $\mathcal H_{g} \in \mathcal B(A^p) \,\,( \mathcal K(A^p))$\\
    $(ii)$ $g\in \Lambda^p_{\frac 1p}\,\, (\lambda^p_{\frac 1p})$\\
\end{other}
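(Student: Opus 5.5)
The statement is Theorem~\ref{summing up3}, the characterization of bounded (compact) $\mathcal H_g$ on $A^p$ for $2<p<\infty$. The plan has two halves: sufficiency via the integral representation (\ref{series=integral}) and sharp pointwise estimates for Bergman functions, and necessity via test functions.

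\textbf{Sufficiency, (ii)$\Rightarrow$(i).} Assume $g\in\Lambda^p_{1/p}$, so that
$$M_p(r,g')=\og\left((1-r)^{1/p-1}\right).$$
For $f\in A^p$ with $p>2$, (\ref{series=integral}) gives
$$\mathcal H_g(f)(z)=\int_0^1 f(t)g'(tz)\,dt .$$
The standard growth estimate for Bergman functions is
$$|f(t)|\lesssim \|f\|_{A^p}(1-t)^{-2/p}.$$
Since $2/p<1$, this makes the integral absolutely convergent. To estimate the $A^p$ norm of $\mathcal H_g f$, I would apply Minkowski's integral inequality on each circle:
$$M_p(r,\mathcal H_g f)\le \int_0^1 |f(t)|\,M_p(rt,g')\,dt \lesssim \int_0^1 |f(t)|\,(1-rt)^{1/p-1}\,dt .$$
The right-hand side is a Hardy-type (Hilbert-type) averaging operator of $|f|$ restricted to $[0,1)$. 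Its $L^p(r\,dr)$ norm should then be controlled by a weighted Hardy inequality (a Schur-test argument with test function $(1-t)^{-\gamma}$). This is combined with the fact that the restriction map $f\mapsto f|_{[0,1)}$ is bounded from $A^p$ into $L^p([0,1),(1-t)\,dt)$, which is the Carleson-measure property of arclength on a radius for $A^p$. Concretely, the target bound is
$$\int_0^1 \left(\int_0^1 |f(t)|(1-rt)^{1/p-1}dt\right)^p dr\lesssim \int_0^1 |f(t)|^p(1-t)\,dt\lesssim \|f\|_{A^p}^p,$$
and I would check it by Schur's test with kernel $K(r,t)=(1-rt)^{1/p-1}(1-t)^{-1/p}$ acting from $L^p(dt)$ to $L^p(dr)$.

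\textbf{Necessity, (i)$\Rightarrow$(ii).} Here I would use the test functions
$$f_a(z)=\frac{(1-a)^{\beta}}{(1-az)^{\beta+2/p}},\qquad 0<a<1,$$
with $\beta>0$ large, so that $\|f_a\|_{A^p}\asymp 1$. For $t\ge a$ one has $f_a(t)\gtrsim (1-a)^{-2/p}$. The aim is to obtain the lower bound
$$M_p(r,\mathcal H_g f_a)\gtrsim (1-a)^{1-2/p}M_p(a,g')$$
for $r$ near $a$. Combining this with $\|\mathcal H_gf_a\|_{A^p}\lesssim 1$ and the lower bound $\|h\|_{A^p}^p\gtrsim (1-r)M_p(r,h)^p$ would give
$$M_p(a,g')\lesssim (1-a)^{2/p-1-1/p}=(1-a)^{1/p-1},$$
which is exactly (ii).

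The lower bound is the main obstacle, since $g'(tz)$ has no sign. To handle it I would pass to coefficients: the $n$-th coefficient of $\mathcal H_g f_a$ is $(n+1)\eta_{n+1}\int_0^1 t^n f_a(t)\,dt$, and the moments $\int_0^1 t^nf_a(t)\,dt$ are positive and comparable to $(1-a)^{1-2/p}$ for $n\lesssim 1/(1-a)$. With this I would compare $\mathcal H_g f_a$ to a positive-multiplier version of $g'$ restricted to a dyadic block, using de la Vallée Poussin polynomials together with the smooth multiplier theorem on $H^p$, and then use $M_p(r,h)$ comparisons on blocks.

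\textbf{Compactness.} Both the little-oh condition and compactness follow by the same estimates. For one direction, the functions $f_a$ tend to $0$ weakly as $a\to1$, so compactness forces the little-oh bound. For the other, one approximates $g$ by its dilations $g_\rho$, which give compact operators, and the sufficiency estimate shows the error tends to $0$ in operator norm under the little-oh condition.
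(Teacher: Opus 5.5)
The paper does not prove this statement. It is quoted from \cite{GGPSi} as background, so there is no in-paper argument to compare against, and your proposal has to stand on its own. It does: it is correct.

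\textbf{Sufficiency.} Your argument closes as stated. After Minkowski's inequality you use the embedding $\int_0^1|f(t)|^p(1-t)\,dt\lesssim\|f\|_{A^p}^p$; this holds because Carleson boxes of size $h$ carry $\lesssim h^2$ of the measure $(1-t)\,dt$ on $[0,1)$. Schur's test for $K(r,t)=(1-rt)^{1/p-1}(1-t)^{-1/p}$ with $h(x)=(1-x)^{-\gamma}$ then works precisely for $1/p^2<\gamma<\min\{1/p,1/p'^2\}$. This range is nonempty exactly because $p>2$. Equivalently, $K$ is dominated by the degree $-1$ homogeneous kernel $(x+y)^{1/p-1}y^{-1/p}$ in $x=1-r$, $y=1-t$, and $\int_0^\infty(1+s)^{1/p-1}s^{-2/p}\,ds<\infty$ if and only if $p>2$.

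This is a genuinely different and more elementary route than the dyadic-block machinery the paper uses for $\mathcal H_g$ on Hardy spaces (Lemma 7 of \cite{GGPSi}, the dyadic form (\ref{Dpp-1 dyadic}), the sublinear operator $\tilde H$). It gives a short proof and shows exactly where $p>2$ enters. However, it is specific to $A^p$. On $H^p$ the same Minkowski step leaves $\int_0^1|f(t)|(1-t)^{1/p-1}\,dt$, and H\"older against Fej\'er--Riesz leads to the divergent $\int_0^1(1-t)^{-1}\,dt$; that is why a finer dyadic argument is needed there.

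\textbf{Necessity.} This part is the paper's own device. With $\beta=2/p'$ your $f_a$ are exactly the test functions in the proof of Theorem~\ref{Rhaly Bergman}. Inverting the positive moments $\mu_n(a)=\int_0^1t^nf_a(t)\,dt$ on a dyadic block via Theorem~\ref{LP-Zy} is what is done in Theorem~\ref{Hg Hardy}(ii).

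One point to fix: do not aim at $M_p(r,\mathcal H_gf_a)\gtrsim(1-a)^{1-2/p}M_p(a,g')$ with $r\le a$. As an inequality uniform in $g$ it is false there. For $g'(z)=z^m$ with $m(1-a)\gg1$ it reduces to $\mu_m(a)\gtrsim\mu_0(a)$, whereas $\mu_m(a)/\mu_0(a)\asymp 1/(m(1-a))$. It does hold for, e.g., $1-r=(1-a)/2$.

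The block version is simpler and is all you need.
\begin{itemize}
\item Take $a=1-1/N$. The multiplier equal to $1/\mu_n(a)$ on $[N,2N)$ and $0$ elsewhere is monotone on the block and $\asymp N^{1-2/p}$.
\item Theorem~\ref{LP-Zy} applied to dilations then gives $M_p(r,\Delta_N g')\lesssim N^{1-2/p}M_p(r,\mathcal H_gf_a)$ for every $r$.
\item Take $1-r=1/N$ and use $M_p(r,\Delta_Nh)\ge r^{2N}\|\Delta_Nh\|_{H^p}$ together with $(1-r)M_p^p(r,h)\lesssim\|h\|_{A^p}^p$.
\item This yields $\|\Delta_Ng'\|_{H^p}\lesssim N^{1-1/p}$, which is condition (v) of Theorem~\ref{th:mlip}.
\end{itemize}
The little-oh version follows in the same way from $\|\mathcal H_gf_a\|_{A^p}\to0$.

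\textbf{Compactness.} Your sufficiency argument here is also fine. $\mathcal H_{g_\rho}$ is a norm limit of finite-rank operators, because $|\int_0^1t^nf(t)\,dt|\lesssim\|f\|_{A^p}$ for $p>2$. Moreover $\|\mathcal H_g-\mathcal H_{g_\rho}\|\lesssim\sup_r(1-r)^{1-1/p}M_p(r,(g-g_\rho)')\to0$ by Theorem~\ref{partialsumlamba}(i).
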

 \par Here we go one step further.
We confirm 
 the strong bond of the two operators by studying their action between distinct Hardy, Bergman spaces. In \cite{Bl-Ma} the authors considered this problem for $\mathcal C_{\mu}$ in the context of
 mixed normed spaces. However, even for this particular version of Rhaly operators, they didn't confront the  case of the Hardy spaces. We prove the following.
\begin{theorem}\label{Rhaly Hardy}
Let $g(z)=\sum_{n=0}^\infty \eta_n z^n \in \hol(\mathbb D)$.\\
$(i)$ If\, $1<p<q\leq 2 $ or $1<p\leq 2< q <\infty $ \, and
   $g\in \Lambda^q_{\frac 1p}$ then $\mathcal R_{\eta}\in \mathcal B(H^p,H^q)$.\\
$(ii)$ If \, $1\leq p <\infty, \,\, 1<q<\infty $, \, and \, $\mathcal R_{\eta}\in \mathcal B(H^p,H^q)$ then $g\in \Lambda^q_{\frac 1p}$\,.\\
$(iii)$ If \, $1<p <q\leq 2 $ or $1<p\leq 2< q <\infty $ \, and \, $g\in \lambda^q_{\frac 1p}$ then \,\,$\mathcal R_{\eta}\in \mathcal K(H^p,H^q)$\\
$(iv)$ If \, $1\leq p <\infty, \,\, 1<q<\infty $, \, and \,$\mathcal R_{\eta}\in \mathcal K(H^p,H^q)$ then $g\in \lambda^q_{\frac 1p}$.\\
\end{theorem}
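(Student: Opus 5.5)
Write $f(z)=\sum_k a_kz^k$ and $S_n(f)=\sum_{k\le n}a_k$. The starting point is the identity
$$\mathcal R_{(\eta)}(f)(z)=\sum_n \eta_n S_n(f)z^n,$$
which is the Hadamard product of $g$ with $f(z)/(1-z)$. Equivalently,
$$\mathcal R_{(\eta)}(f)(z)=\frac{1}{2\pi}\int_0^{2\pi}\frac{f(e^{i\theta})}{1-e^{i\theta}}\,g(ze^{-i\theta})\,d\theta ,$$
understood via $r$-dilates. A more convenient route is to differentiate. Put $F(z)=z\,\mathcal R_{(\eta)}(f)(z)$. Then $F'(z)=\sum_n (n+1)\eta_nS_n(f)z^n$. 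Summing by parts in $n$, this splits as $(zg)'\cdot$(a Cesàro-type term) plus an error, and one obtains the pointwise identity
$$\bigl(z\mathcal R_{(\eta)}f\bigr)' = (zg)'\ast\frac{f}{1-z}.$$
For part (i) the plan is to estimate $M_q(r,(\mathcal R_{(\eta)}f)')$ and show that $\int_0^1 M_q(r,(\mathcal R_{(\eta)}f)')^2(1-r)\,dr<\infty$ when $q\le2$ (Littlewood–Paley), or handle $q>2$ via the mixed-norm characterization. For the latter I would use $H^p\subset H^{2}$-type embeddings or the inclusion $A^{q}_{\ldots}$ and Flett's inequality, together with the hypothesis $M_q(r,g')\lesssim (1-r)^{1/p-1}$.

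Concretely, for (i) I would decompose $f$ dyadically via smooth Fejér/de la Vallée-Poussin blocks $\Delta_j f$ with spectrum in $[2^{j-1},2^{j+1}]$. On each block, $\eta_nS_n(f)$ equals $\eta_n\bigl(S_{2^{j-1}}(f)+\text{partial sums within the block}\bigr)$. The first part is handled by $|S_N(f)|\lesssim N^{1/p}\|f\|_{H^p}$ together with $\|\Delta_j g\|_{H^q}\lesssim 2^{-j/p}$, which is the dyadic form of $g\in\Lambda^q_{1/p}$. The second part is handled by the M.~Riesz boundedness of partial sums on $H^q$ ($q>1$) and a Young-type multiplier estimate $\|\Delta_j g\ast h\|_q\le\|\Delta_j g\|_{q}\|\hat h\|_{\ell^\infty}$ from Hausdorff–Young/Bernstein-type bounds. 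The two regimes $q\le2$ and $q>2$ enter when reassembling: for $q\le 2$ one sums $\|\cdot\|_q^2$ using the square-function (Littlewood–Paley) characterization $\|F\|_{H^q}\asymp\|(\sum|\Delta_jF|^2)^{1/2}\|_q$, with Minkowski used for the $\ell^2$-sum since $q\le2$. The hypothesis $p\le2$ is used through Hardy–Littlewood/Paley: $\sum_j 2^{j(p-2)}\dots$, that is, $\bigl(\sum_j 2^{-2j/p'}\|\Delta_jf\|_\infty^2\bigr)^{1/2}$-type control, or $\{a_k\}$ in a Lorentz space $\ell^{p',p}$-type bound, which holds exactly for $1<p\le2$. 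Making the exponent bookkeeping close with $p<q$, so that the gain $2^{-j(1/p-1/q)}$ summed against Hardy–Littlewood coefficient bounds converges, is the main obstacle. I would try first to reduce to the known case $p=q$ (Theorem~\ref{summing up}) composed with the fractional integration $H^p\to H^q$ of order $1/p-1/q$: note that $g\in\Lambda^q_{1/p}$ iff $I^{1/p-1/q}$-derivative of $g$ lies in $\Lambda^q_{1/q}$. Then I would check that $\mathcal R_{(\eta)}$ intertwines fractional differentiation up to bounded multipliers, i.e.\ $\mathcal R_{(\eta)}=D^{-\beta}\circ\mathcal R_{((n+1)^\beta\eta_n)}$ modulo a Cesàro-type correction bounded on $H^q$, with $\beta=1/p-1/q$. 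Combining this with the Hardy–Littlewood embedding $H^p\subset$ a suitable $D^{-\beta}$-image gives (i).

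For (ii) I would test on $f_a(z)=(1-a)^{1-1/p}\,(1-az)^{-1}$, $a\in(0,1)$, which satisfies $\|f_a\|_{H^p}\asymp 1$ and $S_n(f_a)=(1-a)^{1-1/p}(1-a^{n+1})/(1-a)$. Then $\mathcal R_{(\eta)}f_a=(1-a)^{-1/p}\bigl(g(z)-a\,g(az)\bigr)$. Taking $H^q$ norms and using $\|g-g_a\|_{H^q}$-type lower bounds, by comparing with $M_q$ of $g'$ at radius $a$ via the mean value/integral representation and a standard Hardy–space lemma, gives $M_q(a,g')\lesssim(1-a)^{1/p-1}$. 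Part (iv) follows by the same test functions, since $f_a\to0$ weakly in $H^p$ for $p>1$ (for $p=1$, use weak-* in the $H^1=(VMOA)^*$ sense or normalize with a higher power $(1-az)^{-2}$). Part (iii) follows from (i) by approximating $g$ with $g_r$, whose operators are compact, using the operator-norm estimate from (i) applied to $g-g_r$, and noting that $\|g-g_r\|_{\Lambda^q_{1/p}}\to0$ for $g\in\lambda^q_{1/p}$.
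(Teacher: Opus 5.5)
The genuine gap is in part (i), and therefore also in (iii), which you derive from the norm estimate of (i). Parts (ii) and (iv) are sound for $p>1$. Your $f_a$ are the paper's test functions with $a=1-1/N$. From your closed form $\mathcal R_{(\eta)}f_a=(1-a)^{-1/p}\bigl(g-a\,g(a\,\cdot)\bigr)$, a Cauchy estimate with $a=r$ gives $M_q(r,g')\lesssim(1-r)^{1/p-1}+M_q(r^2,g')$, and this iterates to $g\in\Lambda^q_{1/p}$ because $1/p-1<0$. The paper instead extracts $\|\Delta_{2^\nu}(g)\|_{H^q}\lesssim 2^{-\nu/p}$ via Theorem \ref{LP-Zy}. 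At $p=1$ the normalization fails in both your argument and the paper's, since $\|(1-az)^{-1}\|_{H^1}\asymp\log\frac{1}{1-a}$.

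In (i), after the correct identity $(z\mathcal R_{(\eta)}f)'=D(g)\star\frac{f}{1-z}$, none of your routes closes, and several tools are used in the wrong regime.
\begin{itemize}
\item The sufficient condition $\int_0^1(1-r)M_q^2(r,h')\,dr<\infty$ for $h\in H^q$ is the case $q\ge 2$, namely (\ref{Pv1}). For $q<2$ that inequality reverses, and what suffices is $\int_0^1(1-r)^{q-1}M_q^q(r,h')\,dr<\infty$, as in (\ref{Dpp-1}).
\item Likewise, $\|(\sum_j|h_j|^2)^{1/2}\|_{L^q}\le(\sum_j\|h_j\|_{L^q}^2)^{1/2}$ needs $q\ge2$, not $q\le 2$.
\item Your block estimate $\|\Delta_jg\star h\|_{H^q}\le\|\Delta_jg\|_{H^q}\sup_n|\hat h(n)|$ is false for $q\neq 2$; on a block of length $N$ the loss can be of order $N^{|1/2-1/q|}$. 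You would need the variation condition of Theorem \ref{LP-Zy}, and then you would have to sum the block bounds using $\sum_n(n+1)^{p-2}|a_n|^p\lesssim\|f\|_{H^p}^p$. That is exactly the bookkeeping you leave as the main obstacle.
\item The fractional-integration reduction does not cover $1<p\le 2<q$. The factorization $\mathcal R_{(\eta)}=D^{-\beta}\circ\mathcal R_{((n+1)^\beta\eta_n)}$ is an exact identity, but used this way it needs $\mathcal R_{((n+1)^\beta\eta_n)}\in\mathcal B(H^p)$, i.e. $D^\beta g\in\Lambda^p_{1/p}$. That does not follow from $D^\beta g\in\Lambda^q_{1/q}$, since $\Lambda^p_{1/p}\subsetneq\Lambda^q_{1/q}$ for $p<q$.
\item Moving $D^{-\beta}$ to the input side does work for $q\le 2$: write $\mathcal R_{(\eta)}f=\mathcal R_{((n+1)^\beta\eta_n)}h$ with $S_n(h)=(n+1)^{-\beta}S_n(f)$, and bound $h$ through $D^{-\beta}f$ and $D^{-\beta}\mathcal C(f)$. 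For $q>2$, however, it requires boundedness on $H^q$ from $D^\beta g\in\Lambda^q_{1/q}$. Theorem \ref{summing up} does not give this, and it is not available here (compare Theorem \ref{Rhaly Hg qp}).
\end{itemize}

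The ingredient you are missing is the one the paper uses right after the identity. Pass to $r^2$-means and apply (\ref{Mprsquare}), $M_q(r^2,D(g)\star F)\le M_q(r,D(g))\,M_1(r,F)$ with $F=f/(1-z)$; this turns the Hadamard product into a product of integral means. The hypothesis gives $M_q(r,D(g))\lesssim(1-r)^{1/p-1}$, and H\"older gives $M_1(r,F)\lesssim(1-r)^{-1/s}M_s(r,f)$ for $1<s<\infty$. Insert this into (\ref{Pv1}) when $q>2$ (with $s=q$), or into (\ref{Dpp-1}) when $q\le 2$ (with $s=\gamma p$, $\gamma>1$). Everything then reduces to the Hardy--Littlewood inequality (\ref{HL}), $\int_0^1(1-r)^{\lambda(1/p-1/s)-1}M_s^\lambda(r,f)\,dr\lesssim\|f\|_{H^p}^\lambda$ for $\lambda\ge p$. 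It is applied with $\lambda=2$ when $q>2$, which is exactly where $p\le2$ is needed, and with $\lambda=q$ when $q\le 2$. Your first paragraph points in this direction but omits (\ref{Mprsquare}). With that step, no dyadic decomposition of $f$ and no reduction to the case $p=q$ is needed.
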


\par Looking at the action of the generalized Hilbert operator we conclude that it  behaves the same way as the Rhaly.
\begin{theorem}\label{Hg Hardy}
    Let $g(z)=\sum_{n=0}^\infty \eta_n z^n \in \hol(\mathbb D)$.\\
    $(i)$ If \, $1<p<q\leq 2 $ or $1<p\leq 2 <q <\infty $ \, and \, $g\in \Lambda^q_{\frac 1p}$ then $\mathcal H_g\in \mathcal B(H^p,H^q)$\,.\\
    $(ii)$ If \, $1<p < q < \infty \,\,$ \, and \, $\mathcal H_g\in \mathcal B(H^p,H^q)$ then $g\in \Lambda^q_{\frac 1p}$\,.
    \\
    $(iii)$ If \, $1<p<q\leq 2 $ or $1<p\leq 2 <q <\infty$ \, and \, $g\in \lambda^q_{\frac 1p}$ then $\mathcal H_g\in \mathcal B(H^p,H^q)$\,.\\
    $(iv)$ If \, $1<p < q < \infty $ \, and \, $\mathcal H_g\in \mathcal K(H^p,H^q)$ then $g\in \lambda^q_{\frac 1p}$\,.
    \end{theorem}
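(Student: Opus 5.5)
Our plan is to reduce Theorem~\ref{Hg Hardy} to Theorem~\ref{Rhaly Hardy} through the integral representation (\ref{series=integral}), and to obtain the converses by testing on suitable functions.

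\emph{Sufficiency, parts (i) and (iii).} Take $f\in H^p$ with $p>1$, so that $f\in H^1$ and (\ref{series=integral}) applies. The aim is a pointwise or norm comparison between $\mathcal H_g(f)$ and $\mathcal R_{(\eta)}$ applied to an auxiliary function of comparable $H^p$ norm. In coefficients, $\mathcal H_g(f)$ has $n$-th Taylor coefficient $(n+1)\eta_{n+1}\sum_k a_k/(n+k+1)$. We plan to rewrite it as
$$\sum_k \frac{a_k}{n+k+1}=\int_0^1 t^n f(t)\,dt,$$
which is the Hilbert matrix applied to $f$ composed with a diagonal-type multiplier; this is a Hankel structure rather than a Ces\`aro structure. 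For this reason we would argue directly, imitating the proof of Theorem~\ref{Rhaly Hardy}(i), which presumably proceeds by duality and Hardy--Littlewood type inequalities. Write
$$M_q(r,\mathcal H_g f)\le \int_0^1 |f(t)|\,M_q(rt,g')\,dt \le C\int_0^1 \frac{|f(t)|}{(1-rt)^{1-1/p}}\,dt$$
by Minkowski's inequality and $g\in\Lambda^q_{1/p}$. This only yields growth estimates, so it is insufficient on its own for membership in $H^q$. We therefore instead use duality with $H^{q'}$ through the Cauchy pairing:
$$\langle \mathcal H_g f,h\rangle=\int_0^1 f(t)\,\overline{\langle h, g'_t\rangle}\,dt,$$
which equals $\int_0^1 f(t)\,(g'h^*)(t)\,dt$ up to conjugations. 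Here $g'h^*$ is a product built from $h\in H^{q'}$. Then Fej\'er--Riesz / Carleson measure estimates for $|f(t)|\,dt$ on $[0,1)$, together with the estimate of $M_1$ of the product via H\"older and the Hardy--Littlewood inequality $\int_0^1 (1-t)^{-1/p}\,\cdots\,dt$, give the bound. The case split $q\le2$ versus $q>2$ enters through the Hardy--Littlewood--Paley inequalities: for $q'\ge2$ one uses $\|h\|_{H^{q'}}$ controlling $\int (1-r)^{\cdots}M_{q'}^2$, and for $q'<2$ the reverse. The $1<p\le2$ restriction enters in the same way on the $f$ side. For compactness, part (iii) as printed asserts boundedness, which follows from (i) since $\lambda\subset\Lambda$. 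If compactness is intended, we approximate $g$ by its dilates $g_\rho$, whose operators $\mathcal H_{g_\rho}$ are compact, and show $\|\mathcal H_g-\mathcal H_{g_\rho}\|\to0$ using the little-oh condition in the same estimate.

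\emph{Necessity, parts (ii) and (iv).} We test on $f_a(z)=(1-a^2)^{1-1/p}/(1-az)$ with $a\in(0,1)$, which satisfies $\|f_a\|_{H^p}\asymp1$. Since $f_a\ge0$ on $[0,1)$, we have
$$\mathcal H_g f_a(z)=\int_0^1 f_a(t)\,g'(tz)\,dt,$$
and restricting the integral to $t\in[a,1)$ gives $f_a(t)\gtrsim (1-a)^{-1/p}$ there. This alone does not bound $M_q(a,g')$, because the integral averages $g'$ along the radius. So we compute the coefficients instead: with $f_a=\sum a^k(1-a^2)^{1-1/p}z^k$,
$$\sum_k \frac{a^k}{n+k+1}\gtrsim \frac{1}{n+1}\quad\text{for } n\le \frac{1}{1-a}.$$
We then want to pass from $\mathcal H_gf_a$ to a function like $\sum (n+1)\eta_{n+1}\lambda_n z^n$ with positive, slowly varying $\lambda_n$. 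Using a positive-kernel argument (multiplying by a suitable convolution), we extract $M_q(a,g')\lesssim(1-a)^{1/p-1}\|\mathcal H_g\|$. More precisely, write $\mathcal H_gf_a=D_a(g')$, where $D_a$ is the coefficient multiplier $\lambda_n=(1-a^2)^{1-1/p}\sum_k a^k/(n+k+1)$. The sequence $\lambda_n$ is a completely monotone moment sequence, namely the moments of the measure $(1-a^2)^{1-1/p}\,dt/(1-at)$ on $[0,1]$. Hence $D_a g'(z)=\int_0^1 g'(tz)\,d\nu_a(t)$. Taking means, $M_q(r,\cdot)$ of this average should dominate $M_q(ra,g')$ up to constants, by subharmonicity and the monotonicity of $M_q(\cdot,g')$ in the radius. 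The key step is to invert this averaging, and it is the main obstacle. We try to do it via the Hardy--Littlewood maximal behaviour of $M_q(s,g')$: because $M_q(s,g')$ is increasing in $s$, the mass of $\nu_a$ on $[a,1]$, which is $\asymp (1-a)^{1-1/p}$, gives
$$M_q(\mathcal H_gf_a)\ \ge\ c\,(1-a)^{1-1/p}M_q(a\rho,g'),$$
provided we can pass $M_q$ inside the integral with a lower bound. Since $M_q$ does not pass inside an integral with a lower bound in general, we would use the $q>1$ Riesz projection to pair with the test function $h=(g'_a)^*$-normalized.

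Part (iv) follows from the same computation, since $f_a\to0$ weakly as $a\to1$ and compactness then forces $\|\mathcal H_g f_a\|_{H^q}\to0$.
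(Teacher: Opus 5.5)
There are genuine gaps in both directions.

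\emph{Part (i).} The duality sketch is not yet an argument. The pairing is $\langle \mathcal H_g f,h\rangle=\int_0^1 f(t)\,(g'\star h^\#)(t)\,dt$ with $h^\#(z)=\sum_n \overline{h_n}\,z^n$. This is a Hadamard product, not a pointwise product $g'h^*$.

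The natural way to use it fails. Bounding the kernel by $M_q(\sqrt t,g')\,M_{q'}(\sqrt t,h)\lesssim (1-t)^{-1/p'}\|h\|_{H^{q'}}$ and integrating against $|f(t)|\,dt$ would require $\int_0^1|f(t)|(1-t)^{-1/p'}\,dt\lesssim\|f\|_{H^p}$. That is false: take $f(z)=(1-z)^{-1/p}\left(\log\frac{e}{1-z}\right)^{-\beta}$ with $1/p<\beta\le 1$. Inserting Carleson weights $(1-t)^{s/p-1}$, $s\ge p$, through H\"older only leads to $\int_0^1(1-t)^{-1}M_{q'}^{s'}(\sqrt t,h)\,dt=\infty$ for $h\neq 0$.

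What you are missing is frequency localization. The paper uses:
\begin{itemize}
\item the block estimate $\|\Delta_{2^n}(\mathcal H_g f)\|_{H^q}\lesssim \left(\int_0^1 t^{2^{n-1}+1}|f(t)|\,dt\right)\|\Delta_{2^n}(g')\|_{H^q}$ (Lemma~7 of \cite{GGPSi});
\item the bound $\|\Delta_{2^n}(g')\|_{H^q}\lesssim 2^{n(1-1/p)}$ from Theorem~\ref{th:mlip}(vi);
\item Littlewood--Paley inequalities in the target space: the dyadic form of $\|F\|_{H^q}\lesssim\|F\|_{D^q_{q-1}}$ for $q\le 2$, and $\|F\|_{H^q}^2\lesssim\sum_n\|\Delta_{2^n}F\|_{H^q}^2$ for $q>2$;
\item H\"older with Fej\'er--Riesz, $\int_0^1 t^m|f(t)|\,dt\lesssim m^{-1/p'}\|f\|_{H^p}$, to lower the power of these moments to $p$;
\item finally $\sum_n (n+1)^{p-2}\left(\int_0^1 t^n|f(t)|\,dt\right)^p\lesssim \|\tilde H f\|_{H^p}^p\lesssim\|f\|_{H^p}^p$, where $\tilde H f(z)=\int_0^1 |f(t)|(1-tz)^{-1}\,dt$.
\end{itemize}
This last Hardy--Littlewood coefficient inequality is exactly where $p\le 2$ enters. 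A duality proof can be completed by pairing dyadic blocks of $g'$ and $h$ and using the same inequality, but none of this is in your sketch.

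Your compactness route for (iii), via $\mathcal H_g-\mathcal H_{g_\rho}=\mathcal H_{g-g_\rho}$ and Theorem~\ref{partialsumlamba}(i), is a legitimate alternative to the paper's sequential criterion. However, it presupposes the quantitative bound $\|\mathcal H_h\|_{H^p\to H^q}\lesssim\|h\|_{\Lambda^q_{1/p}}$, i.e.\ a proof of (i).

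\emph{Parts (ii) and (iv).} Your test functions are the paper's ($a=1-1/N$). The inequality you aim at, $M_q(\rho,\mathcal H_g f_a)\gtrsim \nu_a([a,1])\,M_q(a\rho,g')$, is actually true for $1<q<\infty$, but not for the reason you give. Subharmonicity and monotonicity of $M_q(\cdot,g')$ only give upper bounds for $M_q$ of an average of dilates $g'(t\,\cdot)$. A lower bound is a statement about inverting a coefficient multiplier, and your Riesz-projection pairing never does this.

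The missing tool is Theorem~\ref{LP-Zy}.
\begin{itemize}
\item Write $\mathcal H_g f_a=\sum_n (n+1)\eta_{n+1}\lambda_n z^n$ with $\lambda_n=\int_0^1 t^n\,d\nu_a(t)>0$. Then $\lambda_n\ge a^n\nu_a([a,1])$.
\item By Cauchy--Schwarz, $\lambda_{n+1}/\lambda_n$ is nondecreasing.
\item Hence $m_n=\nu_a([a,1])\,a^n/\lambda_n$ lies in $[0,1]$ and is unimodal, so its total variation is at most $2$.
\item Theorem~\ref{LP-Zy}, applied to $(\mathcal H_g f_a)(\rho\,\cdot)$, gives exactly your inequality.
\item With $\nu_a([a,1])\asymp(1-a)^{1-1/p}$ and $\|f_a\|_{H^p}\asymp 1$, this yields $M_q(a,g')\lesssim(1-a)^{1/p-1}$.
\end{itemize}

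The paper instead inverts $\lambda_n$ on the block $[N,2N)$, where $\lambda_n\asymp N^{-1/p'}$. This gives $\|\Delta_{2^\nu}(g')\|_{H^q}\lesssim 2^{\nu/p'}\|\mathcal H_g(f_{2^\nu})\|_{H^q}$, and Theorem~\ref{th:mlip}(vi) concludes.

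Either way, (iv) follows as you indicate from $\|\mathcal H_g f_a\|_{H^q}\to 0$, since the $f_a$ are weakly null for $p>1$. But this holds only once the inversion step is supplied.
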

A different case of the action of $\mathcal H_g$ between distinct Hardy spaces  has been considered recently in \cite{NPW}. Summarizing the two Theorems we state the following.
    \begin{corollary}\label{Hg}
            Let $g(z)=\sum_{n=0}^\infty \eta_n z^n \in \hol(\mathbb D)$and $1<p <q\leq 2 $ or $1<p\leq 2< q <\infty$.\\
    $(i)$ $\mathcal R_{\eta}\in \mathcal B(H^p,H^q)$ if and only if  \,\,$\mathcal H_{g}\in \mathcal B(H^p,H^q)$ if  and only if $g\in \Lambda^q_{\frac 1p}$.\\
    $(ii)$ $\mathcal R_{\eta}\in \mathcal K(H^p,H^q)$ if and only if  \,\,$\mathcal H_{g}\in \mathcal K(H^p,H^q)$ if and only if $g\in \lambda^q_{\frac 1p}$
\end{corollary}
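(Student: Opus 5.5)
The corollary is a direct combination of Theorem~\ref{Rhaly Hardy} and Theorem~\ref{Hg Hardy}, so the plan is only to check that the parameter ranges match and to chain the implications through condition (iii). Fix $1<p<q\le 2$ or $1<p\le 2<q<\infty$. In both ranges $1<p<\infty$, $1<q<\infty$ and $p<q$. Hence the necessity parts, (ii) and (iv) of Theorem~\ref{Rhaly Hardy} (stated for $1\le p<\infty$, $1<q<\infty$) and (ii) and (iv) of Theorem~\ref{Hg Hardy} (stated for $1<p<q<\infty$), all apply. The sufficiency parts, (i) and (iii) of both theorems, are stated for exactly our two ranges.

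\emph{Part (i).} If $\mathcal R_{\eta}\in\mathcal B(H^p,H^q)$, then Theorem~\ref{Rhaly Hardy}(ii) gives $g\in\Lambda^q_{1/p}$. Conversely, $g\in\Lambda^q_{1/p}$ gives $\mathcal R_\eta\in\mathcal B(H^p,H^q)$ by Theorem~\ref{Rhaly Hardy}(i). In the same way, Theorem~\ref{Hg Hardy}(ii) and (i) give $\mathcal H_g\in\mathcal B(H^p,H^q)\iff g\in\Lambda^q_{1/p}$. Chaining these through $g\in\Lambda^q_{1/p}$ yields the three equivalences.

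\emph{Part (ii).} Compactness of $\mathcal R_\eta$ implies $g\in\lambda^q_{1/p}$ by Theorem~\ref{Rhaly Hardy}(iv), and the converse is Theorem~\ref{Rhaly Hardy}(iii). For $\mathcal H_g$, Theorem~\ref{Hg Hardy}(iv) gives necessity. The one obstacle is that Theorem~\ref{Hg Hardy}(iii), as printed, concludes only $\mathcal H_g\in\mathcal B(H^p,H^q)$; I read this as a typo for $\mathcal K(H^p,H^q)$, which is what the compact analogue requires.

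If I had to justify the compact version of Theorem~\ref{Hg Hardy}(iii) myself, I would argue as follows. Let $g\in\lambda^q_{1/p}$ and set $g_r(z)=g(rz)$. Each $\mathcal H_{g_r}$ is compact, since it is a norm limit of finite-rank operators: $g_r'$ is analytic on a larger disc, so the tail of its Taylor series yields operators of small norm. The bounded part (i), applied with the quantitative estimate
$$\|\mathcal H_{g}\|_{H^p\to H^q}\lesssim \sup_{0<r<1}(1-r)^{1-1/p}M_q(r,g')$$
implicit in its proof, then gives $\|\mathcal H_g-\mathcal H_{g_r}\|\lesssim \sup_{\rho}(1-\rho)^{1-1/p}M_q(\rho,g'-g_r')$. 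This quantity tends to $0$ as $r\to1$ by the standard density of dilates in the little-oh space $\lambda^q_{1/p}$. Hence $\mathcal H_g$ is a norm limit of compact operators and is therefore compact.
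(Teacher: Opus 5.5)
Your proposal is correct and follows the paper, which obtains the corollary simply by combining Theorem~\ref{Rhaly Hardy} and Theorem~\ref{Hg Hardy} after checking that the parameter ranges fit, as you do. Your reading of Theorem~\ref{Hg Hardy}(iii) as a compactness statement is right, because the paper's proof of that part shows $\|\mathcal H_g(f_N)\|_{H^q}\to 0$ for every bounded sequence tending to $0$ locally uniformly (via Lemma 10 of \cite{GGPSi}); your dilation argument is therefore a valid alternative, but the corollary does not need it.
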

\par Although the Bergman spaces are part of the mixed normed spaces, here we consider the general version of a Rhaly operator instead of $\mathcal C_{\mu}$ as in \cite{Bl-Ma}.
\begin{theorem}\label{Rhaly Bergman}
    Let $g(z)=\sum_{n=0}^\infty \eta_n z^n \in \hol(\mathbb D)$ and
    $1<p<q<\infty$ such that $\frac{2}{p} - \frac{1}{q} < 1$ then  \\
$(i)$\,\,$\mathcal R_{\eta}\in \mathcal B(A^p,A^q) \,\,\, \text{if and only if} \,\,\, g\in \Lambda^q_{2/p-1/p}\,.$\\
$(ii)$\,\,$\mathcal R_{\eta}\in \mathcal K(A^p,A^q) \,\,\, \text{if and only if} \,\,\, g\in \lambda^q_{2/p-1/p}\,.$
\end{theorem}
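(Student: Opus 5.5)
Throughout I read the condition in the statement as $g\in\Lambda^q_{\alpha}$ with $\alpha=\frac2p-\frac1q$. Since $p<q$ we have $\alpha>\frac1p>0$, and the hypothesis $\frac2p-\frac1q<1$ gives $\alpha<1$, so $\Lambda^q_\alpha$ is a genuine mean Lipschitz space. The growth condition then reads $M_q(r,g')=\og\big((1-r)^{-(1-\frac2p+\frac1q)}\big)$.

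The plan is to factor $\mathcal R_{(\eta)}$ through the Ces\`aro operator $\mathcal C$ and a Hadamard (coefficient) multiplier. Coefficientwise,
$$\eta_n\sum_{k=0}^n a_k=(n+1)\eta_n\cdot\frac{1}{n+1}\sum_{k=0}^n a_k .$$
Hence $\mathcal R_{(\eta)}(f)=(zg')\ast\mathcal C(f)+g\ast\mathcal C(f)$, where $\ast$ is the Hadamard product. Since $\mathcal C$ is bounded on $A^p$ for all $p$, sufficiency reduces to showing that $h\mapsto (zg')\ast h$ maps $A^p$ into $A^q$ when $g\in\Lambda^q_\alpha$; the term $g\ast h$ is easier, being a smoother multiplier. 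I would use the mixed-norm description $A^p=H(p,p,1/p)$ together with the Hardy--Littlewood type convolution inequality
$$M_q(r\rho,G\ast h)\le M_s(r,G)\,M_t(\rho,h),\qquad \frac1s+\frac1t=1+\frac1q .$$
Applied to a derivative of $G=zg'$ and to $h$, with $t=p$ and the Hardy--Littlewood inequality $M_s(r,\cdot)\lesssim (1-r)^{-(1/q-1/s)}M_q(\cdot)$ to pass from $M_q(\cdot,g')$ to $M_s$, this yields
$$M_q\big(r^2,((zg')\ast h)'\big)\lesssim (1-r)^{-2+\frac2p-\frac1q-\frac1q+\frac1s}\,M_p(r,h).$$
Integrating $M_q^q$ against $(1-r)^{q}\,dr$ (the $A^q$ norm via derivatives) and using $\int_0^1 M_p^p(r,h)\,dr\asymp\|h\|_{A^p}^p$, together with $q>p$ and the embedding of $\ell^p$-type mixed norms into $\ell^q$ ones, gives $\|(zg')\ast h\|_{A^q}\lesssim\|h\|_{A^p}$.

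The main obstacle is this exponent bookkeeping. A crude choice ($s=q$, $t=1$ with $M_1(r,f/(1-z))\lesssim M_p(r,f)(1-r)^{-1/p}$) loses a logarithm, so the exponents must be chosen carefully. If a direct choice still fails, I would instead invoke the known characterization of coefficient multipliers between mixed-norm spaces $H(p,p,1/p)\to H(q,q,1/q)$ (Blasco; Mateljevi\'c--Pavlovi\'c). For $p<q$ this says the multiplier $\{(n+1)\eta_n\}$ works iff $M_q(r,\cdot)$ of its fractional derivative has the appropriate growth, and that growth is exactly $g\in\Lambda^q_{2/p-1/q}$.

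For necessity I would test on $f_a(z)=\frac{(1-a)^{\gamma-2/p}}{(1-az)^{\gamma}}$ with $0<a<1$ and $\gamma$ large, so that $\|f_a\|_{A^p}\asymp1$. Then $\mathcal R_{(\eta)}(f_a)=g\ast\frac{f_a}{1-z}$ has positive-type coefficients $s_n(f_a)\asymp (1-a)^{\gamma-2/p}(n+1)^{\gamma}a^n$ for $n\lesssim(1-a)^{-1}$. Pairing $\mathcal R_{(\eta)}(f_a)$ in the $A^2$ duality with suitable kernels, or comparing $M_q(r,\cdot)$ of the image with $M_q(r,g')$ at $r=a$ through a fractional derivative (the test produces $g$ convolved with a kernel of order $\gamma+1$), gives $M_q(a,g')\lesssim(1-a)^{-(1-\alpha)}\|\mathcal R_{(\eta)}\|$, that is, $g\in\Lambda^q_\alpha$.

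For compactness, $(ii)$, I would use the usual scheme. If $g\in\lambda^q_\alpha$, write $g=g_r+(g-g_r)$ with $g_r(z)=g(rz)$. Each $\mathcal R$ induced by $g_r$ is compact, since its image is controlled by finitely many Taylor coefficients up to a small error. The sufficiency estimate, which is linear in the $\Lambda^q_\alpha$ seminorm, shows that the remainder has small operator norm, because $\|g-g_r\|_{\Lambda^q_\alpha}\to0$ in the little-oh sense on large $r$ plus uniform control on small $r$. Conversely, $f_a\to0$ weakly in $A^p$ as $a\to1$, so compactness forces $\|\mathcal R_{(\eta)}f_a\|_{A^q}\to0$, and the necessity estimate upgrades this to $(1-a)^{1-\alpha}M_q(a,g')\to0$.
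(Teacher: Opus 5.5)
Your reading of the index as $\alpha=\frac2p-\frac1q$ matches the one the paper's proof uses. The sufficiency argument, however, rests on a reduction that is false. Writing $\mathcal R_{(\eta)}f=D(g)\star\mathcal C(f)$ and then requiring $h\mapsto D(g)\star h$ to be bounded from \emph{all} of $A^p$ into $A^q$ asks for strictly more than the theorem, and it fails. Test it on monomials. Since $\|z^m\|_{A^p}\asymp m^{-1/p}$ and $\|D(g)\star z^m\|_{A^q}\asymp m^{1-1/q}|\eta_m|$, the multiplier would force $|\eta_m|\lesssim m^{1/q-1/p-1}$. Now take $g(z)=\sum_k 2^{-k\alpha}z^{2^k}$. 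It has $\|\Delta_{2^k}(g)\|_{H^q}=2^{-k\alpha}$, so $g\in\Lambda^q_\alpha$ by Theorem~\ref{th:mlip}. Yet for $m=2^k$ the ratio $\|D(g)\star z^m\|_{A^q}/\|z^m\|_{A^p}\asymp m^{1-1/p}\to\infty$. Consequently no choice of exponents in your Young-type inequality can close the estimate, and the multiplier characterization you plan to fall back on cannot say what you attribute to it. Your displayed bound only appears to work because it uses $M_s(r,\cdot)\lesssim(1-r)^{-(1/q-1/s)}M_q(r,\cdot)$ with $s<q$. That inequality runs the wrong way: it claims a gain of $(1-r)^{1/s-1/q}$ and already fails for constants. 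With the correct $M_s\le M_q$ you only get $(1-r)^{-(2-\alpha)}M_p(r,h)$, which is far too weak.

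The missing idea is to keep the partial-sum structure rather than pass to an arbitrary $h\in A^p$. The paper writes $(S\circ\mathcal R_{(\eta)}f)'=D(g)\star F$ with $F=f/(1-z)$, and uses exactly the choice you dismissed: $M_q(r^2,D(g)\star F)\le M_q(r,D(g))\,M_1(r,F)$. Combined with H\"older, $M_1(r,F)\le M_p(r,f)\,M_{p'}(r,(1-z)^{-1})\asymp(1-r)^{-1/p}M_p(r,f)$. There is no logarithm here, since $1<p'<\infty$. Integrating against $(1-r)^q\,dr$ gives
\[
\|\mathcal R_{(\eta)}f\|_{A^q}^q\lesssim\|g\|^q_{\Lambda^q_\alpha}\int_0^1(1-r)^{\frac qp-1}M_p^q(r,f)\,dr .
\]
Splitting $M_p^q=M_p^p\,M_p^{q-p}$ and using the pointwise bound $M_p(r,f)\lesssim(1-r)^{-1/p}\|f\|_{A^p}$ reduces this to $\|f\|_{A^p}^{q-p}\int_0^1M_p^p(r,f)\,dr\lesssim\|f\|^q_{A^p}$. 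This works because $F$ is $f$ times the Cauchy kernel, so $M_1(r,F)$ is controlled by $M_p(r,f)$; a general $h\in A^p$ has no such structure.

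Your compactness argument relies on a sufficiency estimate linear in the $\Lambda^q_\alpha$ norm, so it inherits the gap. Once the estimate above is in place it goes through; dilations serve as well as the paper's partial sums $S_Ng$, by Theorem~\ref{partialsumlamba}. Your necessity sketch is in the right spirit but vague. The paper makes it precise with $\gamma=2$: it divides out the partial sums on dyadic blocks via Theorem~\ref{LP-Zy} and uses $\|\Delta_{2^\nu}h\|_{H^q}\asymp2^{\nu/q}\|\Delta_{2^\nu}h\|_{A^q}$.
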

At this point we recall that, when $2<p <q<\infty $,\,\, $\frac{2}{p} - \frac{1}{q} < 1,$ then $\mathcal H_{g}\in \mathcal B(A^p,A^q)(\mathcal K(A^p,A^q))$  if and only if  $g\in \Lambda^q_{2/p-1/p}\,(\lambda^q_{2/p-1/p})$. See \cite{PR}.
Combining the latter with Theorem \ref{Rhaly Bergman} we get the following.
\begin{corollary}\label{RB}
    Let $g(z)=\sum_{n=0}^\infty \eta_n z^n \in \hol(\mathbb D)$ and
    $2<p<q<\infty$, \,$\frac{2}{p} - \frac{1}{q} < 1$.\\
    $(i)$ $\mathcal R_{\eta}\in \mathcal B(A^p,A^q)$ if and only if  \,\,$\mathcal H_{g}\in \mathcal B(A^p,A^q)$ if  and only if $g\in \Lambda^q_{2/p-1/p}$.\\
    $(ii)$ $\mathcal R_{\eta}\in \mathcal K(A^p,A^q)$ if and only if  \,\,$\mathcal H_{g}\in \mathcal K(A^p,A^q)$ if and only if $g\in \lambda^q_{2/p-1/p}$.
\end{corollary}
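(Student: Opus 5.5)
The statement is Corollary~\ref{RB}, and it follows by combining Theorem~\ref{Rhaly Bergman} with the result of Pel\'aez and R\"atty\"a \cite{PR} recalled just before it. The plan is to check that the hypotheses of both results hold in the stated range. Then all three conditions in each item are equivalent to the same membership of $g$ in a mean Lipschitz space.

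\emph{Hypotheses.} Assume $2<p<q<\infty$ and $\frac{2}{p}-\frac{1}{q}<1$.
\begin{itemize}
\item These conditions imply $1<p<q<\infty$ together with $\frac2p-\frac1q<1$, so Theorem~\ref{Rhaly Bergman} applies.
\item They are exactly the hypotheses under which \cite{PR} is quoted.
\item Since $p>2$, the operator $\mathcal H_g$ is well defined on $A^p$ by the results of \cite{GGPSi} recalled earlier. In particular \eqref{series=integral} holds with an absolutely convergent integral, so the operator in \cite{PR} coincides with the $\mathcal H_g$ considered here.
\end{itemize}

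\emph{Part $(i)$.} Theorem~\ref{Rhaly Bergman}$(i)$ gives $\mathcal R_{\eta}\in\mathcal B(A^p,A^q)$ if and only if $g\in\Lambda^q_{2/p-1/p}$. The result of \cite{PR} gives $\mathcal H_g\in\mathcal B(A^p,A^q)$ if and only if $g$ belongs to the same class $\Lambda^q_{2/p-1/p}$. Chaining the two equivalences through this common condition proves $(i)$.

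\emph{Part $(ii)$.} The argument is the same, with compactness and the little-oh space $\lambda^q_{2/p-1/p}$ in place of boundedness and $\Lambda^q_{2/p-1/p}$. We use Theorem~\ref{Rhaly Bergman}$(ii)$ and the compactness part of \cite{PR}.

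\emph{Main point to check.} The only real work is confirming that the mean Lipschitz class in \cite{PR} is the one written here. The exponent is written as $2/p-1/p$, and it must be read consistently in both sources. The natural index for $A^p\to A^q$ is $\alpha=\frac2p-\frac1q$ (possibly subtracted from $1$, depending on convention). The hypothesis $\frac2p-\frac1q<1$ then guarantees $0<\alpha<1$, which is what the definition of $\Lambda^q_\alpha$ requires. If the normalizations in \cite{PR} differ, I would translate their growth condition on $M_q(r,g')$ directly into the form $M_q(r,g')=\og\big((1-r)^{-(1-\alpha)}\big)$ used in this paper, and check that both results use the same exponent.
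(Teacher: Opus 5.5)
Your proposal is correct and follows the paper's argument. The paper obtains Corollary~\ref{RB} by chaining Theorem~\ref{Rhaly Bergman} with the Pel\'aez--R\"atty\"a characterization \cite{PR} through the common mean Lipschitz condition, once for boundedness and once for compactness.

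On your open point, the proof of Theorem~\ref{Rhaly Bergman} uses $\alpha=\frac2p-\frac1q$ directly, in the convention $M_q(r,g')=\og\big((1-r)^{-(1-\alpha)}\big)$ with no subtraction from $1$. So the index $2/p-1/p$ in the statement is a typo for $2/p-1/q$. Also, for $p>2$ the restriction $\frac2p-\frac1q<1$ holds automatically.
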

Notice that the condition on the symbol $g\in \hol(\mathbb D)$ in the case of  the Hardy spaces is different from that in the Bergman.
Thus, it breaks down the belief based on the results of the case $p=q$ (\cite{Bl-Ga-Gi}, \cite{Bl-Ga-Gi2}, \cite{GGPSi})
that the two operators behave the same either in the Hardy or in the Bergman spaces.
\par\medskip In \cite{Bl-Ga-Gi} the authors dealt with the problem $\mathcal R_{\eta} : D^2_{\alpha} \to D^2_{\beta}$ that is the action of the Rhaly operator between different Hilbert type Dirichlet spaces. We extend this study to the case of Banach type Dirichlet spaces as below.
 \begin{theorem}\label{Rhaly Dpa Dpb}
    Let  $g(z)=\sum_{n=0}^\infty \eta_n z^n \in \hol(\mathbb D)$ . If $1<p<\infty, a,b>-1$, $ p-2<a<\min\{p-1+b,2p-2\}$ and $a-b+1>0$  then \\
    $(i)$ \,\, $\mathcal R_{\eta}\in \mathcal B(D^p_a,D^p_b)$ if and only if $g\in \Lambda^p_s$ where $s=\frac{a-b+1}{p}$ \,.\\
    $(ii)$ \,\, $\mathcal R_{\eta}\in \mathcal K(D^p_a,D^p_b)$ if and only if $g\in \lambda^p_s$ where $s=\frac{a-b+1}{p}$ \,.
\end{theorem}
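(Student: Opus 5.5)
We plan to treat sufficiency and necessity separately, working throughout with the Hadamard product representation. Writing $s_n=\sum_{k=0}^n a_k$, we have $\sum_{n} s_n z^n = f(z)/(1-z)$, so
$$\mathcal R_{\eta}(f)=g\ast \frac{f}{1-z},\qquad z\,\mathcal R_{\eta}(f)'(z)=\bigl(zg'\bigr)\ast\frac{f}{1-z}.$$
We take the norm on $D^p_c$ to be $|f(0)|^p+\int_{\mathbb D}|f'(z)|^p(1-|z|^2)^c\,dA(z)$. We also split $f=f(0)+f_0$ with $f_0(0)=0$, which gives $\mathcal R_{\eta}(f)=f(0)\,g+\mathcal R_{\eta}(f_0)$.

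\emph{Constant part.} This part explains the lower bound $a>p-2$. Since $\mathcal R_\eta(1)=g$, boundedness forces $g\in D^p_b$. Conversely, if $g\in\Lambda^p_s$, then $\int_0^1(1-r)^{b-(1-s)p}\,dr<\infty$, because $b-(1-s)p=a-p+1>-1$, that is, $a>p-2$. Hence $g\in D^p_b$ and the constant part is controlled.

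\emph{Sufficiency for $f_0$.} We use the standard Young-type inequality for Hadamard products on circles,
$$M_p(r\rho,\,u\ast v)\le M_p(r,u)\,M_1(\rho,v),$$
applied with $u=zg'$ and $v=f_0/(1-z)$. Estimating $M_1(\rho,v)$ directly would cost a logarithm, so we do not do that. Instead we write $f_0(z)/(1-z)$ through the partial sums and use the identity
$$\frac{f_0(z)}{1-z}=\frac{f_0(z)-f_0(1\cdot)}{1-z}\ \text{(formally)},$$
or equivalently we move one derivative from $g$ onto $f$. Concretely, $(1-z)\mathcal R_\eta(f_0)$ is expressed through $g\ast f_0'$ terms. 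After this, the radial integral to be bounded is
$$\int_0^1 (1-r)^{b}\,M_p(r,g')^p\Bigl(\int_0^r M_1(\rho,f_0')\,d\rho\Bigr)^p dr,$$
together with a companion term carrying $M_p(\cdot,f_0')$. We insert $M_p(r,g')\lesssim(1-r)^{s-1}$ and pass from $M_1$ to $M_p$ integral means, using the Hardy–Littlewood inequality $M_q(r,h)\lesssim(1-r)^{1/q-1/p}\ldots$ where needed. A weighted Hardy inequality
$$\int_0^1(1-r)^{\gamma}\Bigl(\int_0^r\phi\Bigr)^p dr\lesssim\int_0^1(1-r)^{\gamma+p}\phi^p\,dr,$$
valid for $\gamma>-1$, then bounds everything by $\int_0^1(1-r)^aM_p(r,f')^p\,dr\le\|f\|_{D^p_a}^p$. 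We expect the admissibility conditions of these two Hardy-type steps to be exactly $a<p-1+b$ and $a<2p-2$, and we will adjust the choice of where to place the fractional derivative until they appear.

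\emph{Necessity.} We test on $f_r(z)=(1-rz)^{-c}$ with $c>0$ chosen large, so that $\|f_r\|_{D^p_a}^p\asymp(1-r)^{a+2-p-pc}$. The partial sums of its coefficients are positive and satisfy $s_n\gtrsim\min(n,(1-r)^{-1})^c$. Comparing coefficientwise with the fractional derivative $D^{c}g$, whose coefficients are $n^c\eta_n$, gives $M_p(r,\mathcal R_\eta(f_r)')\gtrsim M_p(r,D^{c}g')$ up to harmless factors. Combining this with $M_p(\rho,D^cg')\gtrsim(1-\rho)^{-c}M_p(\rho^2,g')$, restricting the $D^p_b$ integral to the annulus $r<|z|<(1+r)/2$, and using monotonicity of integral means, we obtain
$$M_p(r,g')\lesssim(1-r)^{(a-b+1)/p-1}=(1-r)^{s-1},$$
that is, $g\in\Lambda^p_s$. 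This step needs positivity to transfer the coefficient lower bound into an integral-means lower bound. If that fails, we will instead use the duality pairing with $(1-\bar wz)^{-k}$ and argue by testing.

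\emph{Compactness.} We argue in the standard way. For sufficiency, approximate $g$ by its dilations $g_\rho$ in $\Lambda^p_s$ when $g\in\lambda^p_s$; each $\mathcal R_{\eta}$ associated with $g_\rho$ is compact. For necessity, the normalized $f_r/\|f_r\|$ tend to $0$ weakly, and the estimate above then yields the little-oh condition.

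\emph{Main obstacle.} We expect the main difficulty to be the sufficiency estimate: avoiding the logarithmic loss coming from the kernel $1/(1-z)$, and placing derivatives so that the Hardy-inequality exponents match exactly the stated range $p-2<a<\min\{p-1+b,\,2p-2\}$.
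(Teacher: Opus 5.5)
Your constant-part remark is correct and explains well why $a>p-2$ is needed. However, both main directions have genuine gaps.

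\textbf{Sufficiency.} This is the heart of the theorem, and it is not actually argued. The ``formal identity'' $\frac{f_0(z)}{1-z}=\frac{f_0(z)-f_0(1\cdot)}{1-z}$ has no meaning. No identity is given that turns $(zg')\ast\frac{f_0}{1-z}$ into your displayed radial integral plus a ``companion term'', and the exponent bookkeeping is left open. The displayed term also cannot be the main one: once $M_p(r,g')\lesssim(1-r)^{s-1}$ is inserted, it is $O((1-t)^p)$ on your own normalized test functions $(1-tz)^{-c}$, so all the content would sit in the unspecified companion term.

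The premise is mistaken too. Estimating $M_1(r,\frac{f}{1-z})$ directly loses a logarithm only if you put $f$ in $L^\infty$. Hölder with a finite exponent $\kappa p$, $\kappa>1$, gives $M_1(r,\frac{f}{1-z})\lesssim(1-r)^{-1/(\kappa p)}M_{\kappa p}(r,f)$. So from $z\mathcal R_\eta(f)'=(zg')\ast\frac{f}{1-z}$ and your Young inequality, you only need to bound $\int_0^1(1-r)^{a+1-p-1/\kappa}M^p_{\kappa p}(r,f)\,dr$.

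Your weighted Hardy inequality, applied to $M_{\kappa p}(r,f)\le|f(0)|+\int_0^rM_{\kappa p}(\rho,f')\,d\rho$, bounds this by $|f(0)|^p+\int_0^1(1-r)^{a+1-1/\kappa}M^p_{\kappa p}(r,f')\,dr$ once $a+1-p-1/\kappa>-1$. That can be arranged exactly because $a>p-2$. Finally, $M_{\kappa p}(r,f')\lesssim(1-r)^{1/(\kappa p)-1/p}M_p(\frac{1+r}{2},f')$ yields a bound by $\|f\|^p_{D^p_a}$.

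This is the paper's mechanism. It works one derivative higher, writing $\mathcal R_\eta(f)''$ through $g'\star F'$ with $F'=\frac{f'}{1-z}+\frac{f}{(1-z)^2}$, and uses this Hölder/Hardy--Littlewood step on the second piece. The hypotheses also do not enter where you expect. The condition $a<p-1+b$ is just $s<1$. The condition $a<2p-2$ is used in the paper only to get $\|(1-a_Nz)^{-1}\|^p_{D^p_a}\asymp N^{2p-a-2}$ for its necessity test functions.

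\textbf{Necessity.} For $p\neq 2$, a coefficientwise comparison does not transfer to integral means, and positivity of the $s_n$ does not fix that by itself. The inequality $M_p(\rho,D^cg')\gtrsim(1-\rho)^{-c}M_p(\rho^2,g')$ is false (take $g(z)=z$). Moreover, $\mathcal R_\eta(f_r)'$ cannot dominate $D^cg'$ globally, since $s_n\asymp(1-r)^{-c}\ll n^c$ for $n\gg(1-r)^{-1}$.

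What works, and is what the paper does, is to localize to one dyadic block and use a multiplier theorem. Take $1-r=2^{-\nu}$ and $N=2^\nu$. Positivity makes $s_n$ increasing, so $\{1/s_{n+1}\}$ restricted to $[N,2N)$ has sup and variation $\lesssim N^{-c}$. Theorem~\ref{LP-Zy} then gives
\[
\|\Delta_N(g')\|_{H^p}\lesssim N^{-c}\|\Delta_N(\mathcal R_\eta(f_r)')\|_{H^p}\lesssim N^{-c+\frac{1+b}{p}}\|\mathcal R_\eta(f_r)\|_{D^p_b}\lesssim N^{-c+\frac{1+b}{p}}\|f_r\|_{D^p_a}\asymp N^{1-s}.
\]
Here you use $\|\Delta_Nh\|_{H^p}\lesssim N^{(1+b)/p}\|\Delta_Nh\|_{A^p_b}$ and the uniform boundedness of $\Delta_N$ on $A^p_b$. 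Theorem~\ref{th:mlip}(vi) then gives $g\in\Lambda^p_s$.

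Repaired this way, your large $c$ is actually better than the paper's $c=1$, since this direction no longer needs $a<2p-2$. The compactness parts inherit both gaps. The dilation argument needs the sufficiency bound with constant $\lesssim\|g\|_{\Lambda^p_s}$, and the little-oh necessity needs the block estimate.
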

\par Finally, we answer completely the open question posed in \cite{T2} about the action of $\mathcal H_g $  on the spaces $\mathcal S^p$ consisted of analytic functions on the unit disc with first derivative in  $H^p$.
\begin{theorem}\label{Hg on Sp}
    Let $g(z)=\sum_{n=0}^\infty \eta_n z^n \in \hol(\mathbb D)$. If $1\leq p <\infty, 1< q <\infty$ then the following are equivalent\\
$(i)\,\,\mathcal H_g \in \mathcal B(\mathcal S^p, \mathcal S^q)$ \\
 $(ii)\,\,\mathcal H_g  \in \mathcal K(\mathcal S^p, \mathcal S^q)$\\
 $(iii) \,\,g\in S^q$
 \end{theorem}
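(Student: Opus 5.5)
Since $(ii)\Rightarrow(i)$ is trivial, we prove $(iii)\Rightarrow(ii)$ and $(i)\Rightarrow(iii)$, starting from the integral representation \eqref{series=integral}. Every $f\in\mathcal S^p$ with $p\ge1$ lies in the disc algebra (indeed $f'\in H^1$ makes $f$ absolutely continuous on $\overline{\mathbb D}$), so $f\in H^1$ and
$$
\mathcal H_g(f)(z)=\int_0^1 f(t)\,g'(tz)\,dt,\qquad
\mathcal H_g(f)'(z)=\int_0^1 t\,f(t)\,g''(tz)\,dt .
$$
For the sufficiency we integrate by parts in $t$, using $\frac{d}{dt}\big[g'(tz)\big]=z\,g''(tz)$:
$$
z\,\mathcal H_g(f)'(z)=\Big[t f(t) g'(tz)\Big]_0^1-\int_0^1 \big(f(t)+t f'(t)\big) g'(tz)\,dt
= f(1)g'(z)-\int_0^1 \big(f(t)+tf'(t)\big)g'(tz)\,dt .
$$
The boundary term is $f(1)g'(z)$, whose $H^q$ norm is at most $\|f\|_\infty\|g'\|_{H^q}\lesssim\|f\|_{\mathcal S^p}\|g\|_{S^q}$.

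For the integral term we use Minkowski's inequality together with the monotonicity of integral means, $M_q(t,g')\le\|g'\|_{H^q}$:
$$
\Big\|\int_0^1 h(t)\,g'(tz)\,dt\Big\|_{H^q}\le \|g'\|_{H^q}\int_0^1|h(t)|\,dt,\qquad h(t)=f(t)+tf'(t).
$$
Here $\int_0^1|f|\le\|f\|_\infty$, and $\int_0^1|f'(t)|\,dt\lesssim\|f'\|_{H^1}$ by Fejér–Riesz. Passing from $z\mathcal H_g(f)'$ to $\mathcal H_g(f)'$ costs nothing: near $0$ we use analyticity, away from $0$ we divide by $|z|$. We also need $\mathcal H_g(f)(0)$ bounded, which is immediate. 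This gives boundedness. The only delicate point is that $\mathcal H_g(f)'$ need not be bounded near $\partial\mathbb D$, so we should run the integration by parts on $M_q(r,\cdot)$ for $r<1$ and then let $r\to1$.

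For compactness, the estimate shows that $\mathcal H_g$ splits as the rank-one operator $f\mapsto f(1)\,g$ plus an operator $T$ with
$$
\|T f\|_{\mathcal S^q}\lesssim \|g\|_{S^q}\int_0^1\big(|f|+|f'|\big)\,dt .
$$
We take a bounded sequence $f_n$ in $\mathcal S^p$ converging to $0$ uniformly on compacta. Since $\mathcal S^p$ embeds compactly in the disc algebra, $f_n\to0$ uniformly on $\overline{\mathbb D}$, which controls $f_n(1)$ and $\int_0^1|f_n|$. The obstacle is $\int_0^1|f_n'|$, which need not tend to $0$ for $p=1$. To handle it, we first approximate $g$ by $g_r(z)=g(rz)$ in $S^q$; this is possible since $q>1$, so $g'\in H^q$ and $g'_r\to g'$ in $H^q$. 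For the smooth symbol $g_r$, whose $g_r''$ is bounded, we use the form $\mathcal H_{g_r}(f)'(z)=\int_0^1 t f(t)\,g_r''(tz)\,dt$. This involves no $f'$, so its norm tends to $0$ as $\|f_n\|_\infty\to0$; hence each $\mathcal H_{g_r}$ is compact. Since $\|\mathcal H_g-\mathcal H_{g_r}\|\lesssim\|g-g_r\|_{S^q}\to0$ by the boundedness estimate, $\mathcal H_g$ is a norm limit of compact operators and is therefore compact.

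For the necessity $(i)\Rightarrow(iii)$ we test on $f\equiv1\in\mathcal S^p$, which gives $\mathcal H_g(1)(z)=\int_0^1 g'(tz)\,dt=\frac{g(z)-g(0)}{z}$. This function lies in $\mathcal S^q$, so $(g-g(0))/z\in S^q$. Then $g-g(0)=z\cdot\frac{g-g(0)}{z}$ has derivative in $H^q$ as well, so $g\in S^q$. The main technical point of the whole argument is the integration-by-parts bound with the Fejér–Riesz estimate on the radius; the necessity direction is straightforward.
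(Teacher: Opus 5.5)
Your proof is correct apart from one false claim, discussed below. For the sufficiency and compactness directions you take a genuinely different route from the paper; your necessity argument, testing on $f\equiv 1$, is exactly the paper's.

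\textbf{Comparison of the routes.} The paper works on the coefficient side. It writes $D(\mathcal H_g(f))=D(\mathcal H(f))\star g'$ and observes that $\lambda_n=(n+1)\sum_k a_k/(n+k+1)$ is bounded and has bounded variation over dyadic blocks. Both bounds are controlled by $\sum_k|a_k|\lesssim\|f\|_{\mathcal S^p}$, which is Hardy's inequality for $f'\in H^1$. It then applies the multiplier Theorem~\ref{LP-Zy} to $g'\in H^q$, and gets compactness from the finite-rank truncations using $\|S_N(g')-g'\|_{H^q}\to 0$.

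You instead integrate by parts in the integral representation, moving the extra derivative from $g''$ onto $f$. After that, Minkowski's inequality, the monotonicity of $M_q(r,g')$ and Fej\'er--Riesz on the radius suffice. Your route is more elementary and never uses Littlewood--Paley or M.~Riesz theory, so it works verbatim for $q=1$ as well. The paper's route needs $1<q<\infty$, both for Theorem~\ref{LP-Zy} and for $S_N(g')\to g'$ in $H^q$. In exchange, the paper's route fits the coefficient-multiplier machinery used throughout and yields finite-rank approximants directly, with no sequential compactness criterion.

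\textbf{The false step.} $\mathcal S^p$ does not embed compactly into the disc algebra when $p=1$. Take $f_n(z)=\frac{(1-a_n)z}{1-a_nz}$ with $a_n\uparrow 1$. Then $f_n(0)=0$, $\|f_n'\|_{H^1}=\frac{1}{1+a_n}\le 1$, and $f_n\to 0$ uniformly on compact subsets of $\mathbb D$, yet $f_n(1)=1$. So for $p=1$ neither $f_n(1)\to 0$ nor $\|f_n\|_\infty\to 0$ is available, and your justification that $\mathcal H_{g_r}$ is compact rests on the latter. For $p>1$ the claim is true, since $\mathcal S^p\subset\Lambda_{1-1/p}$ and Arzel\`a--Ascoli applies.

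\textbf{The repair.} You only need $\int_0^1|f_n(t)|\,dt\to 0$. This follows from bounded convergence, because $\sup_n\|f_n\|_\infty\lesssim\sup_n\|f_n\|_{\mathcal S^1}<\infty$ and $f_n\to 0$ pointwise on $[0,1)$. Then
$\|\mathcal H_{g_r}(f_n)\|_{\mathcal S^q}\le\bigl(|g_r'(0)|+\sup_{\overline{\mathbb D}}|g_r''|\bigr)\int_0^1|f_n(t)|\,dt\to 0$.
Alternatively, you can avoid the sequential criterion altogether. Since $g_r$ is analytic on a larger disc, each $\mathcal H_{S_N(g_r)}$ is finite rank, and by your own boundedness estimate these operators converge to $\mathcal H_{g_r}$ in operator norm.
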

Combining Theorem \ref{Hg on Sp} with the results
about the Rhaly operator proved in \cite{GG-2025} we are allowed to state that
\begin{corollary}\label{RHgSp}
    Let $g(z)=\sum_{n=0}^\infty \eta_n z^n \in \hol(\mathbb D)$. If $1\leq p  <\infty, 1<q<\infty $ then the following are equivalent\\
    $(i)\,\,\mathcal H_g \in \mathcal B(\mathcal S^p, \mathcal S^q)$\\
    $(ii)\,\,\mathcal R_{\eta} \in \mathcal B(\mathcal S^p, \mathcal S^q)$\\
    $(iii)\,\,\mathcal H_g \in \mathcal K(\mathcal S^p, \mathcal S^q)$\\
    $(iv) \,\,\mathcal R_{\eta} \in \mathcal K(\mathcal S^p, \mathcal S^q)$\\
    $(v) \,\,g\in S^q$
\end{corollary}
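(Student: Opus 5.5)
The statement to prove is Corollary~\ref{RHgSp}. It is a combination of Theorem~\ref{Hg on Sp} with the corresponding result for the Rhaly operator from \cite{GG-2025}. The plan is therefore to assemble two chains of equivalences that share the common node $(v)$, namely $g\in S^q$. No new estimate is needed; the work lies entirely in the two ingredient results.

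\textbf{Generalized Hilbert chain.} Theorem~\ref{Hg on Sp} applies verbatim in the range $1\le p<\infty$, $1<q<\infty$. It gives
\[
\mathcal H_g\in\mathcal B(S^p,S^q)\iff \mathcal H_g\in\mathcal K(S^p,S^q)\iff g\in S^q,
\]
which is $(i)\Leftrightarrow(iii)\Leftrightarrow(v)$.

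\textbf{Rhaly chain.} I will invoke the theorem of \cite{GG-2025}. In the same range of exponents it states
\[
\mathcal R_{\eta}\in\mathcal B(S^p,S^q)\iff \mathcal R_{\eta}\in\mathcal K(S^p,S^q)\iff g\in S^q,
\]
which is $(ii)\Leftrightarrow(iv)\Leftrightarrow(v)$. The equivalence of all five conditions then follows by transitivity through $(v)$.

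\textbf{Point to check.} The one thing to verify is that the hypotheses of \cite{GG-2025} cover exactly $1\le p<\infty$, $1<q<\infty$, including the endpoint $p=1$. If that paper states the result only for $p\le q$, or only for $p>1$, I would close the gap directly, as follows.
\begin{enumerate}
\item For necessity, test $\mathcal R_\eta$ on the constant function $1\in S^p$. Since $\mathcal R_\eta(1)=g$, boundedness gives $g\in S^q$.
\item For sufficiency, use $\mathcal R_\eta(f)=g\cdot T(f)$ for a suitable averaging-type operator $T$, together with the embedding $S^p\subset H^\infty$ (in fact $S^p\subset A(\mathbb D)$ for $p\ge1$).
\item Compactness then follows from the fact that bounded sets in $S^p$ are relatively compact in the disc algebra.
\end{enumerate}
This patching step is the only potential obstacle. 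I expect, however, that the cited result already covers the full range of exponents, so that the corollary is immediate.
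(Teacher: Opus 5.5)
Your main argument is the paper's own: the corollary comes from combining Theorem~\ref{Hg on Sp}, which gives $(i)\Leftrightarrow(iii)\Leftrightarrow(v)$, with the Rhaly result of \cite{GG-2025}, which gives $(ii)\Leftrightarrow(iv)\Leftrightarrow(v)$ in the same range $1\le p<\infty$, $1<q<\infty$, so no patching is needed. Your fallback sketch would not work as written, though. $\mathcal R_{\eta}(f)=g\star\frac{f}{1-z}$ is a Hadamard product, not a pointwise product $g\cdot T(f)$, and bounded sets of $S^1$ are not relatively compact in the disc algebra. A direct proof should instead copy the paper's proof of Theorem~\ref{Hg on Sp}: apply Theorem~\ref{LP-Zy} to $g'$ with the partial-sum multiplier $\{\sum_{k\le n+1}a_k\}_{n\ge 0}$, whose sup norm and total variation are at most $\sum_k|a_k|\lesssim\|f\|_{S^p}$, and obtain compactness by approximating with finite-rank truncations.
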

\par Throughout the paper
we shall be using the convention that
$C=C(p, \alpha ,q,\beta , \dots )$ will denote a positive constant
which depends only upon the displayed parameters $p, \alpha , q,
\beta \dots $ (which sometimes will be omitted) but not  necessarily
the same at different occurrences. Furthermore, for two real-valued
functions $K_1, K_2$ we write $K_1\lesssim K_2$, or $K_1\gtrsim
K_2$, if there exists a positive constant $C$ independent of the
arguments such that $K_1\leq C K_2$, respectively $K_1\ge C K_2$. If
we have $K_1\lesssim K_2$ and $K_1\gtrsim K_2$ simultaneously, then
we say that $K_1$ and $K_2$ are equivalent and we write $K_1\asymp
K_2$.
\section{Notation and preliminaries}\label{prelim} \par
Let us start fixing some notation. If $f\in \hol (\mathbb
D)$, $f(z)=\sum_{k=0}^\infty a_kz^k$ ($z\in \mathbb D)$, we set
\begin{equation}\label{deltaN}\Delta _N(f)(z)\,=\,\sum_{k=N}^{2N-1}a_kz^k, \quad 1\le N<\infty.
\end{equation}
and
\begin{equation}\label{Df}D(f)(z)\,=\,zf^\prime (z)+ f(z)=\sum_{n=0}^\infty (n+1)a_nz^n,\quad z\in \mathbb
    D.\end{equation} In other words, $D(f)(z)=(Sf)^\prime (z)$, where $S$ is
the shift operator.
\subsection{Spaces of analytic functions in the unit disc}
\par\medskip Let
$\,0<r<1\,$ and $\,f\in \hol (\D)$. We set
$$
M_p(r,f)=\left(\frac{1}{2\pi }\int_0^{2\pi }
|f(re^{it})|^p\,dt\right)^{1/p}, \,\,\, 0<p<\infty ,
$$

$$
M_\infty(r,f)=\sup_{\vert z\vert =r}|f(z)|.
$$
For $\,0<p\le \infty $,\, the Hardy space $H^p$ consists of those
$f\in \hol(\mathbb D)$ such that $$\Vert f\Vert _{H^p}\defeq
\sup_{0<r<1}M_p(r,f)<\infty .$$
It holds that
$$
H^q \subset  H^p\, \quad 0<p <q \leq \infty\,.
$$
In particular, if $1\leq p \leq 2$ and  $f(z)=\sum_{n=0}^{\infty} a_n z^n \in \hol(\mathbb D)$ then we recall from Littlewood-Paley theory that
\begin{equation}\label{Dpp-1}
\sum_{n=0}^{\infty} (n+1)^{p-2} |a_n|^p \lesssim \|f\|_{H^p}^p \lesssim |f(0)|^p + \int_{\mathbb D} |f'(z)|^p (1-|z|^2)^{p-1} \,dA(z)
\end{equation}
where the $dA(z)$ stands for the area Lebesgue measure on $\mathbb D$\, \cite{Zy}. Notice that the inequality on the left is true even when $p\in(0,1)$. Recalling Theorem D from \cite{GGPSi},
\begin{equation}\label{Dpp-1 dyadic}
    \int_{\mathbb D} |f'(z)|^p (1-|z|^2)^{p-1} \,dA(z) \asymp |f'(0)|^p + \sum_{n=0}^{\infty} 2^{-np} \|\Delta_{2^n}(f')\|_{H^p}^p\,.
\end{equation}
Additionally, if $2<p<\infty$ then
\begin{equation}\label{Pv1}
    \|f\|_{H^p}^2 \lesssim \int_0^1  (1-r) M_p^2(f',r) \,dr \lesssim \sum_{n=0}^\infty \|\Delta_{2^n}(f)\|_{H^p}^2\,.
\end{equation}
In general, if $1<p<q<\infty$ then
\begin{equation}\label{Pv2}
    \|f\|_{H^q}^q \lesssim \int_0^1  (1-r)^{q(1-\frac 1p)} M_p^q(f',r) \,dr \,.
\end{equation}
For more information about (\ref{Pv1}), (\ref{Pv2})  see \cite{MP}. We will also  make use of a result of Hardy and Littlewood  stated as Theorem $5.11$ in \cite{Du:Hp}.
\begin{other}\label{HardyLittle}
    Let $0<p<q\leq \infty,\,\, f\in H^p, \,\,\lambda\geq p $ then
    \begin{equation}\label{HL}
        \int_0^1 (1-r)^{\lambda(\frac 1p-\frac 1q)-1} M_q^{\lambda}(f,r)\, dr < \infty\,.
    \end{equation}
\end{other}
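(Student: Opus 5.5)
The plan is to reduce to the single case $\lambda=p$, then use inner–outer factorization to pass to $H^2$, where a dyadic block decomposition finishes the argument. The main obstacle is the $H^2$ estimate; the key input there is Nikolskii's inequality for polynomials of degree below $2N$, which puts the $H^2$ coefficient norms into a discrete Hardy inequality.

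\emph{Reduction to $\lambda=p$.} By subharmonicity, $M_\infty(r,f)\lesssim (1-r)^{-1/p}\Vert f\Vert_{H^p}$. Combining this with $M_q^q\le M_\infty^{q-p}M_p^p$ gives
$$\phi(r):=(1-r)^{\frac1p-\frac1q}M_q(r,f)\lesssim \Vert f\Vert_{H^p}.$$
The integrand in (\ref{HL}) equals $\phi(r)^{\lambda}(1-r)^{-1}$, which is at most $\Vert\phi\Vert_\infty^{\lambda-p}\,\phi(r)^p(1-r)^{-1}$. So it suffices to prove (\ref{HL}) for $\lambda=p$. When $q=\infty$ I would first apply the finite-$q$ case together with the standard inequality $M_\infty(r,f)\lesssim (1-\rho)^{-1/q}M_q(\rho,f)$ for $\rho=(1+r)/2$. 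Alternatively, I would simply run the argument below with $q=\infty$ directly.

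\emph{Reduction to $p=2$.} Write $f=Bh$ with $B$ a Blaschke product and $h\in H^p$ zero-free, and set $G=h^{p/2}\in H^2$, so that $\Vert G\Vert_{H^2}^2=\Vert f\Vert_{H^p}^p$. Since $|f|\le|h|=|G|^{2/p}$, we get $M_q(r,f)^p\le M_s(r,G)^2$ with $s=2q/p>2$. Also $p(\frac1p-\frac1q)-1=-\frac{p}{q}=-\frac 2s$. Hence it is enough to prove
$$\int_0^1 (1-r)^{-2/s}M_s^2(r,G)\,dr\lesssim \Vert G\Vert_{H^2}^2,\qquad G\in H^2,\ 2<s\le\infty .$$

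\emph{The $H^2$ estimate.} Decompose $G=G(0)+\sum_{n\ge0}\Delta_{2^n}G$, so that $M_s(r,G)\le |G(0)|+\sum_n r^{2^{n}}\Vert\Delta_{2^n}G\Vert_{H^s}$. Here I use that $M_s(r,\cdot)$ of a block with frequencies at least $N$ is at most $r^N$ times its $H^s$ norm. By Nikolskii's inequality for polynomials of degree $<2N$, $\Vert\Delta_NG\Vert_{H^s}\lesssim N^{\frac12-\frac1s}\Vert\Delta_NG\Vert_{H^2}$. Put $b_n=\Vert\Delta_{2^n}G\Vert_{H^2}$, so that $\sum b_n^2=\Vert G-G(0)\Vert_{H^2}^2$, and set $\varepsilon=\frac12-\frac1s>0$.

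On $I_k=[1-2^{-k},1-2^{-k-1}]$ we have $r^{2^n}\le e^{-2^{n-k-1}}$. This gives
$$M_s(r,G)\lesssim |G(0)|+\sum_{n\le k}2^{n\varepsilon}b_n+\sum_{n>k}e^{-2^{n-k-1}}2^{n\varepsilon}b_n .$$
The second sum is at most $C\,2^{k\varepsilon}\sup_{n>k} b_n$, so it can be merged with a term of the first kind. The weight satisfies $\int_{I_k}(1-r)^{-2/s}dr\asymp 2^{-k(1-2/s)}=2^{-2k\varepsilon}$. It therefore remains to check the discrete Hardy inequality
$$\sum_k 2^{-2k\varepsilon}\Big(\sum_{n\le k}2^{n\varepsilon}b_n\Big)^2\lesssim\sum_n b_n^2,$$
together with the analogous bound for the tail terms. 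By Cauchy–Schwarz, $\big(\sum_{n\le k}2^{n\varepsilon}b_n\big)^2\le C_\varepsilon 2^{k\varepsilon}\sum_{n\le k}2^{n\varepsilon}b_n^2$. Interchanging the order of summation then leaves geometric series in $k$, which converge precisely because $\varepsilon>0$, i.e.\ because $q>p$. This completes the proof.
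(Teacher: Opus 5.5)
Your two reductions are correct. The growth bound $(1-r)^{1/p-1/q}M_q(r,f)\lesssim\Vert f\Vert_{H^p}$ reduces everything to $\lambda=p$. The factorization $f=Bh$, $G=h^{p/2}$ correctly turns the problem into $\int_0^1(1-r)^{-2/s}M_s^2(r,G)\,dr\lesssim\Vert G\Vert_{H^2}^2$ with $s=2q/p$. Most of the dyadic set-up is also fine: the factor $r^{2^n}$, Nikolskii's inequality with $\varepsilon=\frac12-\frac1s$, the estimate $\int_{I_k}(1-r)^{-2/s}dr\asymp 2^{-2k\varepsilon}$, and the Cauchy--Schwarz treatment of $\sum_{n\le k}$. (The paper gives no argument of its own here; it quotes the result from Duren, Theorem 5.11.)

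The step that fails is the tail. Write $T_k:=\sum_{n>k}e^{-2^{n-k-1}}2^{n\varepsilon}b_n$. Replacing $T_k$ by $C2^{k\varepsilon}\sup_{n>k}b_n$ throws away the decay in $n-k$. After multiplying by $2^{-2k\varepsilon}$ and summing over $k$, you would then need $\sum_k\sup_{n>k}b_n^2\lesssim\sum_n b_n^2$, and this is false. A single block $b_m=1$ already gives $m$ on the left. Worse, take $G(z)=\sum_{j\ge1}j^{-1}z^{2^{2^j}}\in H^2$, so that $b_{2^j}=1/j$ and all other $b_n=0$. Then the left side is at least $\sum_j 2^{j-1}/j^2=\infty$. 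So the ``analogous bound for the tail terms'' cannot come from the sup bound, and the tail is not a term ``of the first kind.''

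The repair is short: keep the exponential weights and use Cauchy--Schwarz on the tail as well,
$$T_k^2\le\Big(\sum_{n>k}e^{-2^{n-k-1}}2^{2n\varepsilon}\Big)\Big(\sum_{n>k}e^{-2^{n-k-1}}b_n^2\Big)\le C_\varepsilon\, 2^{2k\varepsilon}\sum_{n>k}e^{-2^{n-k-1}}b_n^2 .$$
Then
$$\sum_k2^{-2k\varepsilon}T_k^2\le C_\varepsilon\sum_n b_n^2\sum_{k<n}e^{-2^{n-k-1}}\lesssim\sum_n b_n^2 .$$
Equivalently, apply Schur's test to the kernel $K(k,n)=2^{(n-k)\varepsilon}$ for $n\le k$ and $K(k,n)=2^{(n-k)\varepsilon}e^{-2^{n-k-1}}$ for $n>k$, whose row and column sums are uniformly bounded. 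This handles both sums at once.

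With this change your argument is a complete proof. One small notational point: in the $q=\infty$ remark, the exponent in $M_\infty(r,f)\lesssim(1-\rho)^{-1/q}M_q(\rho,f)$ must be some finite $q_0\in(p,\infty)$, not $q$ itself.
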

In general, we refer to \cite{Du:Hp} for the
notation and results regarding Hardy spaces.
\par\medskip We say that an $f\in \hol (\mathbb D)$ belongs to $A^p\, (0<p<\infty)$ if
$$
\|f\|^p_{A^p}=\int_{\mathbb D} |f(z)|^p\, dA(z) <\infty \,.
$$
Moreover, due to Lemma $4.26$ in \cite{Zhu}
\begin{equation}\label{ZhuBergman1}
    \|f\|_{A^p} \asymp \|S(f)\|_{A^p} \,.
\end{equation}
According to \cite{Zhu}(p.85), the Bergman norm can be equivalently expressed in terms of the derivative as
\begin{equation}\label{ZhuBergman2}
    \|f\|_{A^p}^p \asymp |f(0)|^p + \int_{\mathbb D}  |f'(z)|^p (1-|z|^2)^{p}\,dA(z)\,.
\end{equation}
\par We say that an $f \in \hol(\mathbb D)$ belongs to weighted Dirichlet space $D^p_{\alpha},\, \alpha \in (-1,\infty),$ if $f'\in A^p_{\alpha}$\,.
For more information on Bergman spaces we suggest \cite{DS}, \cite{HKZ}, \cite{Zhu}.
\subsection{Mean Lipschitz spaces}\label{Mean Lip}
\par\medskip
If $f$ is a function which is analytic in $\mathbb D $ and has a
non-tangential limit $f(e\sp {i\theta })$ at almost every $\xi \in
\partial \mathbb D$, we define
$$
\aligned \omega _ p(\delta , f)= \sup_ {0<\vert t\vert \leq \delta
}\left (\frac{1}{2\pi }\int _{-\pi }\sp \pi \left \vert f(e\sp
{i(\theta +t)})-f(e\sp {i\theta })\right \vert \sp p\,
d\theta \right )\sp {1/p}, &\quad \delta >0,\quad\hbox{if $1\leq p<\infty $},\\
\omega _ \infty (\delta , f)= \sup_ {0<\vert t\vert \leq \delta
}\left (\operatornamewithlimits{ess.sup}_ {\theta\in [-\pi, \pi ]}
\vert f(e\sp {i(\theta +t)})-f(e\sp {i\theta })\vert\right ), &\quad
\delta >0.\endaligned $$ Then $\omega _ p(., f)$ is the integral
modulus of continuity of order $p$ of the boundary values $f(e\sp
{i\theta })$ of $f$.
For $1\le p\le \infty $ and $0<\alpha \le 1$, we let $\Lambda
^p_\alpha $ be the space of those $f\in \hol (\mathbb D)$ having a
non-tangential limit at almost every point of $\partial \mathbb D$
and so that $\omega _ p(\cdot , f)$, the integral modulus of
continuity of order $p$ of the boundary values $f(e^{i\theta })$ of
$f$, satisfies $\omega _ p(\delta , f)=\og(\delta ^{\alpha })$, as
$\delta \to 0$. When $p=\infty $ we write $\Lambda _\alpha $ instead
of $\Lambda ^\infty _\alpha $. This is the usual Lipschitz space of
order $\alpha $. More precisely, a function $f\in \hol (\mathbb D)$
belongs to $\Lambda _\alpha $ if and only if it has a continuous
extension to the closed unit disc and its boundary values satisfy a
Lipschitz condition of order $\alpha $. The corresponding \lq\lq little
oh\rq\rq \, spaces are denoted by $\lambda ^p_\alpha $.
\par
Classical results of Hardy and Littlewood (see \cite{BSS} and
\cite[Chapter\,\@5]{Du:Hp}) show that whenever $0<\alpha \le 1$ and
$1\le p\le \infty $ we have that $\Lambda ^p_{\alpha }\subset H^p$
and that
\begin{equation}\Lambda ^p_{\alpha }\,=\,\left \{ \hbox{$f$ analytic
        in $\D $: $M_ p(r, f\sp\prime )= \og \left ((1-r)\sp {\alpha
            -1}\right )$, \quad as $r\to 1$} \right\}.\end{equation} The
corresponding \lq\lq little oh\rq\rq \, spaces are denoted by
$\lambda ^p_\alpha $. It is not difficult to see that we can replace
the condition $ M_ p(r, f\sp\prime )= \og \left ((1-r)\sp {\alpha
    -1}\right )$ by  $M_ p(r, Df )= \og \left ((1-r)\sp {\alpha
    -1}\right ).$
\par If $1<p<\infty $ and $1/p<\alpha \le 1$, then
$\Lambda^p_\alpha \subset \Lambda_{\alpha -{(1/p)}}$ and, hence,
each function in $\Lambda ^p_\alpha $ has a continuous extension to
the closed unit disc \cite[p.\,\@88]{BSS}. This is not true for the
space $\Lambda ^p_{1/p}$. This follows easily noticing that the
function $f(z)=\log (1-z)$ is an unbounded function which belongs to
$\Lambda_ {1/p}\sp p$ for all $p\in (1, \infty )$. Bourdon, Shapiro,
and Sledd \cite{BSS} proved that $\Lambda^p_{1/p}\subset BMOA$ for
all $p\in (1, \infty )$. This inclusion was shown to be sharp in a
very strong sense in \cite{BGM}. Let us remark also that the
$\Lambda^p_{1/p}$-spaces form a nested scale of spaces
\begin{equation}\label{nested}
    \Lambda ^q_{1/q}\,\subset \,\Lambda ^p_{1/p}\quad 1\le q<p<\infty .
\end{equation}
\par\medskip
For $1\le p\le \infty $ and $0<\alpha \le 1$, the space $\Lambda
^p_\alpha $ is a Banach space with the norm $\Vert \cdot \Vert _{p,
    \alpha }$ defined by
$$\Vert f\Vert _{p, \alpha }\,=\,\vert f(0)\vert
\,+\,\sup_{0<r<1}(1-r)^{1-\alpha }M_p(r,f^\prime) \approx
|f(0)| + \sup_{0<r<1} (1-r)^{1-\alpha }M_p(r, Df).$$
Let us recall several
distinct characterizations of $\Lambda^p_\alpha$ spaces, (see
\cite{BSS}, \cite{Du:Hp}, and \cite{MP}).

\begin{other}\label{th:mlip}
    Suppose that $1<p<\infty$, $0<\alpha<1$ and $g\in \hol(\D)$. The
    following conditions are equivalent:
    \begin{enumerate}
        \item[(i)] $g\in \Lambda^p_\alpha .$
        \item[(ii)] $M_p(r,g')=\ogr\left(\frac{1}{(1-r)^{1-\alpha}}\right)$, as $r\to 1^-$.
        \item[(iii)] $\|\Delta_{N}(g)\|_{H^p}=
        \ogr\left(N^{-\alpha}\right)$, as $N\to\infty$.
        \item[(iv)] $\Vert \Delta_{2^n}(g
        )\Vert_{H^p}=\ogr\left(2^{-\alpha n}\right )$, as $n\to\infty $.
        \item[(v)] $\|\Delta_{N}(g')\|_{H^p}=
        \ogr\left(N^{(1-\alpha)}\right)$, as $N\to\infty$.
        \item[(vi)] $\Vert \Delta_{2^n}(g^\prime
        )\Vert_{H^p}=\ogr\left(2^{n(1-\alpha )}\right )$, as $n\to\infty $.
    \end{enumerate}
    \par By substituting \lq\lq big oh\rq\rq \, for \lq\lq little oh\rq\rq \, in (ii), (iii), (iv), (v), and (vi),
    we obtain conditions that are equivalent to the condition $g\in
    \lambda^p_\alpha $.
\end{other}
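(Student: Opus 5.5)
The plan is to reduce everything to the dyadic condition (vi), using two tools available because $1<p<\infty$.

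\textbf{Tools.} The first tool is the M.~Riesz theorem. The partial sum operators $f\mapsto \sum_{k\le m} a_k z^k$, and hence all block projections $f\mapsto \sum_{k=M}^{L}a_kz^k$, are bounded on $H^p$ uniformly in $M,L$. The same holds on each circle $|z|=r$ for $M_p(r,\cdot)$.

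The second tool is a Bernstein-type equivalence on blocks, obtained by summation by parts. If $P=\sum_{k=N}^{2N-1}a_kz^k$ and $m(k)$ is a sequence on $[N,2N)$ with $\sup|m|+\sum|m(k+1)-m(k)|\le C$, then $\|\sum m(k)a_kz^k\|_{H^p}\le C'\|P\|_{H^p}$. To see this, write the multiplied polynomial as $m(2N-1)P$ minus a combination of partial sums of $P$ with coefficients $m(k+1)-m(k)$, and bound each partial sum by Riesz.

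\textbf{Steps.}
\begin{enumerate}
\item[(a)] Apply the Bernstein tool with $m(k)=k/N$ and $m(k)=N/k$. Since $\Delta_N(g')$ is $\Delta_N(g)$ shifted by one index and multiplied by $k$, this gives $\|\Delta_N(g')\|_{H^p}\asymp N\|\Delta_N(g)\|_{H^p}$, up to a harmless one-term shift handled again by Riesz. This yields (iii)$\Leftrightarrow$(v) and (iv)$\Leftrightarrow$(vi).
\item[(b)] For (iv)$\Rightarrow$(iii), take $2^n\le N<2^{n+1}$. The block $[N,2N)$ is contained in $[2^n,2^{n+2})$, so $\Delta_N(g)$ is a Riesz projection of $\Delta_{2^n}(g)+\Delta_{2^{n+1}}(g)$. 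Hence $\|\Delta_N g\|_{H^p}\lesssim 2^{-\alpha n}\asymp N^{-\alpha}$. The converse implication is trivial.
\item[(c)] For (vi)$\Rightarrow$(ii), write $g'$ as a polynomial plus $\sum_n\Delta_{2^n}(g')$. Then $M_p(r,\Delta_{2^n}g')\le r^{2^n}\|\Delta_{2^n}g'\|_{H^p}$ by subharmonicity applied to $z^{-2^n}\Delta_{2^n}g'$. So $M_p(r,g')\lesssim 1+\sum_n 2^{n(1-\alpha)}r^{2^n}\lesssim (1-r)^{\alpha-1}$ by the standard lacunary estimate.
\item[(d)] For (ii)$\Rightarrow$(vi), use Riesz on the circle $|z|=r$: $M_p(r,\Delta_{2^n}g')\lesssim M_p(r,g')$. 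Also $\|\Delta_{2^n}g'\|_{H^p}\le r^{-2^{n+1}}M_p(r,\Delta_{2^n}g')$, by the same subharmonicity argument applied to $z^{-2^{n+1}}\Delta_{2^n}g'(z)$ on $\{r<|z|\}$, or simply via the coefficient dilation multiplier $r^{-k}$, which has bounded variation on the block. Choosing $r=1-2^{-n}$ gives $\|\Delta_{2^n}g'\|_{H^p}\lesssim 2^{n(1-\alpha)}$.
\item[(e)] The equivalence (i)$\Leftrightarrow$(ii) is the classical Hardy--Littlewood theorem, which we take from \cite{Du:Hp} and \cite{BSS}.
\end{enumerate}

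The little-oh versions follow from the same inequalities. In (c), split the sum at a large $n_0$; the finitely many low blocks contribute $\og(1)=o\big((1-r)^{\alpha-1}\big)$, and the tail uses the small constants. In (d) the estimates are pointwise in $n$, so they transfer directly.

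The main obstacle is the uniform multiplier estimate in (a) and (d). This is exactly where $p>1$ is needed: we must control blockwise multipliers $N/k$ and $r^{-k}$ uniformly in $N$. The plan is to do this via Abel summation plus the uniform boundedness of the Riesz partial sum projections, rather than invoking a general Marcinkiewicz multiplier theorem.
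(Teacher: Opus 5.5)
Your proof is correct. The paper gives no proof of this theorem; it only recalls it from \cite{BSS}, \cite{Du:Hp} and \cite{MP}. So your proposal supplies the argument the paper outsources.

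\textbf{How your route differs.} You import only the Hardy--Littlewood equivalence (i)$\Leftrightarrow$(ii), in both its big-oh and little-oh forms. You then derive the chain (ii)$\Leftrightarrow$(vi)$\Leftrightarrow$(iv)$\Leftrightarrow$(iii)$\Leftrightarrow$(v) from four ingredients:
\begin{itemize}
\item the M.~Riesz theorem;
\item a one-block Abel-summation multiplier lemma;
\item monotonicity of integral means, applied to $z^{-2^n}\Delta_{2^n}(g')$ inside the disc and to $z^{-2^{n+1}}\Delta_{2^n}(g')$ outside it;
\item the lacunary estimate $\sum_n 2^{n(1-\alpha)}r^{2^n}\asymp(1-r)^{\alpha-1}$.
\end{itemize}
This is in the spirit of the dyadic-block techniques of \cite{MP}. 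Compared with a bare citation, it shows exactly where each hypothesis is used. The condition $p>1$ gives uniform boundedness of the block projections and of the multipliers $k/N$, $N/k$, $r^{-k}$ on a block. The condition $\alpha<1$ makes the lacunary sum grow like a negative power of $1-r$. It also gives the little-oh statements with no extra work.

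\textbf{A point of precision in step (a).} The relation $\|\Delta_N(g')\|_{H^p}\asymp N\|\Delta_N(g)\|_{H^p}$ is false for an individual $N$; take $g(z)=z^{2N}$. The reason is that $\Delta_N(g')$ involves $a_{N+1},\dots,a_{2N}$, while $\Delta_N(g)$ involves $a_N,\dots,a_{2N-1}$. The stray coefficient must be bounded through a neighbouring block. For instance, $|a_{2N}|\le\|\Delta_{2N}(g)\|_{H^p}$ and $N|a_N|\le\|\Delta_{N-1}(g')\|_{H^p}$. In the dyadic case one uses $\Delta_{2^{n+1}}(g)$ and $\Delta_{2^{n-1}}(g')$ instead. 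Because every condition in the theorem is asymptotic in $N$ or $n$, this is indeed harmless. But it is a neighbouring-block estimate, not Riesz applied to $\Delta_N(g)$ itself, and your write-up should say so.
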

The following result, proved in \cite{GG-2025}, will be needed in our work.
\begin{other}\label{partialsumlamba} Suppose that $1<p<\infty
    $, $0<\alpha \le 1$, and $f\in \lambda ^p_\alpha $. Then:
    \par (i) Defining, for $0<r<1$, $f_r$ by
    $f_r(z)=f(rz)$\, ($\vert z\vert \le 1$), we have that
    $$\beta_{p, \alpha }(f-f_r)\,\rightarrow 0,\quad \text{as $r\to
        1$}.$$
    \par (ii)
    $S_Nf\,\rightarrow f,\,\,\text{as $N\to \infty $, in the norm of
        $\Lambda^p_\alpha $}.$
\end{other}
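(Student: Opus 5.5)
The plan is to prove (i) by an $\varepsilon$–splitting of the radius into a compact part and a part near the boundary. Part (ii) then follows from (i), from the uniform boundedness of the partial sum operators on $\Lambda^p_\alpha$ (which comes from the M.~Riesz theorem, since $p>1$), and from the fact that $f_r$ is analytic on a larger disc. Throughout, $\beta_{p,\alpha}(h)=\sup_{0<s<1}(1-s)^{1-\alpha}M_p(s,h')$. This is the seminorm part of $\Vert\cdot\Vert_{p,\alpha}$. Note that $(f-f_r)(0)=0$ and $(f-S_Nf)(0)=0$, so for these functions $\beta_{p,\alpha}$ coincides with the full norm.

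\emph{Proof of (i).} Put $h_r=f-f_r$, so $h_r'(z)=f'(z)-rf'(rz)$. Fix $\varepsilon>0$. Since $f\in\lambda^p_\alpha$, by Theorem~\ref{th:mlip} there is $s_1\in(0,1)$ with $(1-t)^{1-\alpha}M_p(t,f')<\varepsilon$ for $t\ge s_1$. Set $s_2=(1+s_1)/2$ and take $r\ge s_1/s_2$; then $rs\ge s_1$ for every $s\ge s_2$.

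For $s\ge s_2$ we have $1-s\le 1-rs$, hence
$$(1-s)^{1-\alpha}M_p(s,h_r')\le (1-s)^{1-\alpha}M_p(s,f')+(1-rs)^{1-\alpha}M_p(rs,f')<2\varepsilon .$$
For $s\le s_2$ we use $(1-s)^{1-\alpha}\le 1$ and $M_p(s,h_r')\le \max_{|z|\le s_2}|f'(z)-rf'(rz)|$. The right-hand side tends to $0$ as $r\to1$, because $rf'(rz)\to f'(z)$ uniformly on the compact disc $|z|\le s_2$. Combining the two ranges gives $\limsup_{r\to1}\beta_{p,\alpha}(h_r)\le 2\varepsilon$, which proves (i). The main obstacle is precisely the region near the boundary, where $rs$ might drop below the threshold $s_1$; choosing the compact radius $s_2$ strictly between $s_1$ and $1$ and then letting $r\to1$ is what resolves it.

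\emph{Proof of (ii).} First we show that the partial sum operators are uniformly bounded on $\Lambda^p_\alpha$. Since $(S_Nh)'=S_{N-1}(h')$, the M.~Riesz theorem applied to the dilations $h'_s$ gives $M_p(s,(S_Nh)')\le C_pM_p(s,h')$ for all $s$ and $N$, hence $\beta_{p,\alpha}(S_Nh)\le C_p\beta_{p,\alpha}(h)$.

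Next we decompose
$$f-S_Nf=(f-f_r)-S_N(f-f_r)+(f_r-S_Nf_r),$$
which yields
$$\beta_{p,\alpha}(f-S_Nf)\le (1+C_p)\beta_{p,\alpha}(f-f_r)+\beta_{p,\alpha}(f_r-S_Nf_r).$$

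Given $\varepsilon>0$, use (i) to fix $r<1$ with $(1+C_p)\beta_{p,\alpha}(f-f_r)<\varepsilon$. Writing $f(z)=\sum_k a_kz^k$, we get
$$\beta_{p,\alpha}(f_r-S_Nf_r)\le \sup_{|z|\le1}\bigl|(f_r-S_Nf_r)'(z)\bigr|\le\sum_{n\ge N}n|a_n|r^n .$$
This tail tends to $0$ as $N\to\infty$, because $\limsup|a_n|^{1/n}\le1$ and $r<1$. Hence $\limsup_N\beta_{p,\alpha}(f-S_Nf)\le\varepsilon$, which proves (ii).
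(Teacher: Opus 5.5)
Your proof is correct. The paper itself gives no proof of this statement: it is quoted from \cite{GG-2025}, and only part (ii) is used, in the compactness parts of Theorem~\ref{Rhaly Hardy}(iii) and Theorem~\ref{Rhaly Bergman}(ii). So there is no in-paper argument to compare yours against, and yours fills that gap with a self-contained proof.

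Each step checks out:
\begin{itemize}
\item In (i), splitting at $s_2=(1+s_1)/2$ and taking $r\ge s_1/s_2$ makes $rs\ge s_1$ hold throughout the outer region. There the bound $(1-s)^{1-\alpha}\le(1-rs)^{1-\alpha}$ reduces everything to the little-oh hypothesis. On $|z|\le s_2$ you have uniform convergence of $rf'(rz)$ to $f'(z)$.
\item In (ii), the M.~Riesz bound applied to the dilates $h'_s$ gives $\beta_{p,\alpha}(S_Nh)\le C_p\beta_{p,\alpha}(h)$. This is exactly where $p>1$ is needed.
\item The decomposition $f-S_Nf=(f-f_r)-S_N(f-f_r)+(f_r-S_Nf_r)$, together with the geometric tail bound for $f_r$, finishes the argument.
\end{itemize}

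Two cosmetic remarks.
\begin{itemize}
\item $\beta_{p,\alpha}$ is never defined in the paper. Your reading as the seminorm $\sup_{s}(1-s)^{1-\alpha}M_p(s,h')$ is the natural one. It also makes no difference, since $f-f_r$ and $f-S_Nf$ both vanish at $0$, so seminorm and full $\Lambda^p_\alpha$-norm agree.
\item Theorem~\ref{th:mlip} is stated only for $0<\alpha<1$. For $\alpha=1$ you only need the little-oh definition $M_p(r,f')=o\bigl((1-r)^{\alpha-1}\bigr)$ directly. In fact, since $M_p(r,f')$ is nondecreasing in $r$, $\lambda^p_1$ consists only of constants, so that case is trivial.
\end{itemize}
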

\subsection{Convolution of analytic functions}\label{multiplier}
\par If $f$
and $g$ are two analytic functions in the unit disc,
$$f(z)=\sum_{n=0}^\infty a_nz^n,\quad g(z)=\sum_{n=0}^\infty
b_nz^n,\quad z\in \mathbb D,$$ the convolution $f\star g$ of $f$ and
$g$ is defined by \begin{equation}\label{convdef}f\star
    g(z)\,=\,\sum_{n=0}^\infty a_nb_nz^n,\quad z\in \mathbb
    D.\end{equation} We have \begin{equation}\label{convdefint}f\star
    g(\rho re^{i\theta })\,=\,\frac{1}{2\pi }\int_0^{2\pi }f(\rho
    e^{it})g(re^{i(\theta -t)})\,dt,\quad 0<\rho <1,\,\,0<r<1,\,\,\theta
    \in \mathbb R.\end{equation} It is well known that the convolution
of a function in $H^1$ and another one in $H^p$ ($p\ge 1$) lies in
$H^p$ and that
\begin{equation*}\label{conv}\Vert f\star g\Vert_{H^p}\le \Vert
    f\Vert _{H^1}\Vert g\Vert _{H^p},\quad p\ge 1, f\in H^1, g\in
    H^p.\end{equation*} Actually, (\ref{convdefint})
implies that if $f\in H^1$ and $g\in \hol (\mathbb D)$, then
\begin{equation}\label{convh1hp} M_p(r, f\star g)\,\le \Vert f\Vert _{H^1}M_p(r, g), \quad 0<r<1.\end{equation}
Also, taking $\rho =r$ in (\ref{convdefint}), we obtain
\begin{equation}\label{Mprsquare}M_p(r^2, f\star g)\,\le M_1(r,
    f)M_p(r, g),\quad f, g\in \hol (\mathbb D),\,\,0<r<1.\end{equation}
\par\medskip We shall use in our work the following coefficient multiplier
theorem for the Hardy spaces which can be found in \cite[Vol. II,
Chapter\,\@XV,  Theorem\,\@4.\,\@14]{Zy}.
\begin{other}\label{LP-Zy} Assume that $1<p<\infty $. Then there exists a positive
    constant $A_p$ such that  if $\{ \lambda _j\} _{j=0}^\infty $ is a
    sequence of complex numbers with the property  that
    $$\vert \lambda_j\vert \le M,\,\,\,\,\sum_{j=2^k}^{2^{k+1}-1}\vert
    \lambda _{j+1}-\lambda _j\vert\,\le M,\quad j, k=0, 1, 2, \dots ,$$
    for a certain $M\ge 0$,  then the following holds: \par If $f\in
    H^p$, $f(z)=\sum_{n=0}^\infty a_nz^n$ ($z\in\mathbb D$), and
    $$h(z)=\sum_{n=0}^\infty \lambda_na_nz^n,\quad z\in \mathbb D,$$
    then $h\in H^p$ and
    $$\Vert h\Vert _{H^p}\,\le \,MA_p\Vert f\Vert _{H^p}.$$
\end{other}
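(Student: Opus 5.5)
Theorem~\ref{LP-Zy} is the Marcinkiewicz multiplier theorem, restricted to analytic functions. My plan is the classical Littlewood--Paley argument. I work on the boundary values and first take $f$ to be an analytic polynomial; the general case then follows by density of polynomials in $H^p$ together with the fact that the coefficients of $h$ depend continuously on $f$. Write $\Delta_0 f=a_0$ and, for $k\ge0$, let $\Delta_{2^k}f$ be the dyadic blocks from (\ref{deltaN}).

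\textbf{Tools.} I will use three standard facts for $1<p<\infty$.
\begin{enumerate}
\item[(a)] The Littlewood--Paley theorem: $\|F\|_{H^p}\asymp \bigl\|\bigl(\sum_k|\Delta_{2^k}F|^2\bigr)^{1/2}\bigr\|_{L^p(\partial\mathbb D)}$.
\item[(b)] M.~Riesz's theorem: the partial-sum projections $S_{[m,n]}$, which keep the frequencies $m\le j\le n$, are bounded on $H^p$ uniformly in $m,n$.
\item[(c)] The vector-valued (Hilbert-space-valued) extension of (b), due to Marcinkiewicz--Zygmund, which applies to arbitrary families of intervals $I_\nu$ and functions $F_\nu$:
$$\Bigl\|\Bigl(\sum_\nu |S_{I_\nu}F_\nu|^2\Bigr)^{1/2}\Bigr\|_{L^p}\le C_p\Bigl\|\Bigl(\sum_\nu |F_\nu|^2\Bigr)^{1/2}\Bigr\|_{L^p}.$$
\end{enumerate}

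\textbf{Summation by parts on each block.} With $N=2^k$, Abel summation over $N\le j\le 2N-1$ gives
$$\Delta_{N}h=\lambda_{2N-1}\Delta_N f-\sum_{j=N}^{2N-2}(\lambda_{j+1}-\lambda_j)\,S_{[N,j]}(\Delta_N f).$$
Set $c_j=\lambda_{j+1}-\lambda_j$. The hypothesis gives $|\lambda_{2N-1}|\le M$ and $\sum_{j=N}^{2N-2}|c_j|\le M$. By Cauchy--Schwarz with weights $|c_j|$ we get the pointwise bound
$$|\Delta_N h|^2\lesssim M^2|\Delta_N f|^2+M\sum_{j=N}^{2N-2}|c_j|\,\bigl|S_{[N,j]}\Delta_Nf\bigr|^2 .$$
Summing over $k$ and taking $L^p$ norms of square roots, (a) reduces the theorem to bounding
$$\Bigl\|\Bigl(\sum_k\sum_{j}|c_j|\,|S_{[2^k,j]}\Delta_{2^k}f|^2\Bigr)^{1/2}\Bigr\|_{L^p}.$$

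\textbf{The main obstacle.} This last quantity is the step I expect to be hardest. I would apply (c) to the doubly indexed family $\nu=(k,j)$, taking $F_\nu=|c_j|^{1/2}\Delta_{2^k}f$ and $I_\nu=[2^k,j]$. Since (c) is linear in the weights, this bounds the quantity by
$$C_p\Bigl\|\Bigl(\sum_k\Bigl(\sum_j|c_j|\Bigr)|\Delta_{2^k}f|^2\Bigr)^{1/2}\Bigr\|_{L^p}\le C_pM^{1/2}\Bigl\|\Bigl(\sum_k|\Delta_{2^k}f|^2\Bigr)^{1/2}\Bigr\|_{L^p}.$$
Applying (a) once more gives $\lesssim M^{1/2}\|f\|_{H^p}$. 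Combining this with the $M^2|\Delta_N f|^2$ term yields $\|h\|_{H^p}\le A_pM\|f\|_{H^p}$.

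The only genuinely delicate ingredient is (c). I would prove it from the scalar Riesz theorem by writing $S_{[m,n]}$ as a combination of modulations of the Riesz projection $P_+$, which reduces (c) to vector-valued boundedness of the conjugate function. That boundedness follows from the $\ell^2$-valued Calder\'on--Zygmund theory, or from Marcinkiewicz--Zygmund's Gaussian-averaging argument; the latter shows that any bounded linear operator on $L^p$ extends boundedly to $\ell^2$-valued functions.
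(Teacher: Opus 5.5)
The paper gives no proof of this statement; it simply quotes it from Zygmund's \emph{Trigonometric Series} (Vol.~II, Ch.~XV, Thm.~4.14). Your argument is correct and is essentially the classical proof given there: Littlewood--Paley decomposition into dyadic blocks, Abel summation inside each block, the weighted Cauchy--Schwarz bound, the $\ell^2$-valued M.~Riesz inequality for partial-sum projections, and finally density of polynomials.
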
\par\smallskip In short, the conclusion of
Theorem\,\@\ref{LP-Zy} says that the sequence $\{ \lambda _j\}
_{j=0}^\infty $ is a coefficient multiplier from $H^p$ into itself
with multiplier-norm controlled by $M$.
\section{Proofs of the main results}\label{proofs}
\medskip
\medskip
\begin{Pf}{\it Theorem\,\@\ref{Rhaly Hardy} :}
\medskip
$(i)$ \,
First we deal with the case $1 <p \leq 2 <q $. We assume that $g\in \Lambda^q_{1/p}$. Thus,
 for any $f\in H^p$,
 \\
\begin{align*}
\|\mathcal R_{\eta}(f)\|^q_{H^q}&\lesssim \|S \circ \mathcal R_{\eta}(f)\|^q_{H^q}\\
&\lesssim \int_0^1 (1-r) M^2_q((S \circ \mathcal R_{\eta}(f))',r) dr \\
\end{align*}
due to (\ref{Pv1}). Therefore
\begin{align*}
\|\mathcal R_{\eta}(f)\|^q_{H^q} & \lesssim \int_0^1 (1-r) M^2_q((S \circ \mathcal R_{\eta}(f))',r) dr \\
& = \int_0^1 (1-r^2) M^2_q((S \circ \mathcal R_{\eta}(f))',r^2)\, 2r \, dr \\
& \lesssim \int_0^1 (1-r) M^2_q((S \circ \mathcal R_{\eta}(f))',r^2)\, dr \,.
\end{align*}
It is true that
\begin{equation}\label{Rhaly conv}
    (S \circ \mathcal R_{\eta}(f))'(z)= D(g) \star F (z)\,,\quad z \in \mathbb D
\end{equation}
where
\begin{equation}\label{F}
F(z)=f(z) \cdot \frac {1}{1-z}\,, z\in \mathbb D\,.
\end{equation}
As a consequence,
\begin{align*} \|\mathcal R_{\eta}(f)\|^q_{H^q} & \lesssim  \int_0^1 (1-r^2) M^2_q(D(g) \star F,r^2) dr \\
    &\lesssim \int_0^1 (1-r) M^2_q(D(g),r) M^2_1(F,r) dr\\
 & \asymp \int_0^1 (1-r) M^2_q(g',r) M^2_1(F,r) dr\\,.
\end{align*}
Applying the assumption $g\in \Lambda^q_{\frac{1}{p}}$
\begin{align*}
\|\mathcal R_{\eta}(f)\|^q_{H^q}
& \lesssim \int_0^1 (1-r)^{\frac{2}{p}-1} M^2_1(F,r) dr\,.
\end{align*}
Since
\begin{align*}
    M^2_1(F,r)\lesssim \frac{1}{(1-r)^{\frac{2}{q}}} M^2_q(f,r)
\end{align*}
we finally get that
\begin{align*}
    \|\mathcal R_{\eta}(f)\|^q_{H^q}
    & \lesssim \int_0^1 (1-r)^{\frac{2}{p}-1} \frac{1}{(1-r)^{\frac{2}{q}}} M^2_q(f,r) dr\\
    & = \int_0^1 (1-r)^{2(\frac{1}{p}-\frac{1}{q})-1}  M^2_q(f,r) dr\lesssim \|f\|^q_{H^p}
\end{align*}
where in the last line we have used (\ref{HL})\,.
\par\medskip Let now $1<p<q\leq 2$. Then, on account of (\ref{Dpp-1}),
\begin{align*}
    \|\mathcal R_{\eta}(f)\|^q_{H^q}&\lesssim \|S \circ \mathcal R_{\eta}(f)\|^q_{H^q}
    \lesssim \int_0^1 (1-r)^{q-1} M^q_q((S \circ \mathcal R_{\eta}(f))',r) dr \\
    &\lesssim \int_0^1 (1-r)^{q-1} M^q_q((S \circ \mathcal R_{\eta}(f))',r^2) dr\\
    & =\int_0^1 (1-r^2)^{q-1} M^q_q(D(g)\star F,r^2) dr\\
    & \leq \int_0^1 (1-r)^{q-1} M^q_q(D(g),r) M^q_1(F,r) dr\\
    &  \asymp \int_0^1 (1-r)^{q-1} M^q_q(g',r) M^q_1(F,r) dr\,.
\end{align*}
Due to the assumption $g\in \Lambda^q_{\frac{1}{p}}$
\begin{align*}
& \|\mathcal R_{\eta}(f)\|^q_{H^q}\lesssim \int_0^1 (1-r)^{\frac{q}{p}-1} M^q_1(F,r)\, dr\,.
\end{align*}
Taking a $\gamma >1$ and since $F$ is as in (\ref{F})
\begin{align*}
    M^q_1(F,r)\lesssim \frac{1}{(1-r)^{\frac{q}{\gamma p}}}\, M^q_{\gamma p}(f,r)\,.
\end{align*}
Consequently,
\begin{align*}
    \|\mathcal R_{\eta}(f)\|^q_{H^q}&\lesssim \int_0^1 (1-r)^{\frac{q}{p}-1} \frac{1}{(1-r)^{\frac{q}{\gamma p}}} M^q_{\gamma p}(f,r) dr\\
    & = \int_0^1 (1-r)^{q(\frac{1}{p}-\frac{1}{\gamma p})-1}  M^q_{\gamma p}(f,r) dr\lesssim \|f\|^q_{H^p}
\end{align*}
applying (\ref{HL}) once more.\\
\medskip
 $(ii)$: Assume that $1\leq p< \infty,\, 1<q<\infty$ and that
$$
\|\mathcal R_{\eta}(f)\|_{H^q} \lesssim \|f\|_{H^p} \,, \quad f\in H^p\,.
$$
Consider the family of test functions
 \begin{equation}\label{testf}
f_N(z)=N^{-\frac {1}{p^{'}}} \sum_{n=0}^\infty a^n_N z^n =\frac{N^{-\frac {1}{p^{'}}}}{1-a_N z}\,, \quad z\in \mathbb D
\end{equation}
for
$$a_N=1-\frac 1N\,,\,\,N=2,3...$$
Since $\sup_{N}\|f_N\|_{H^p} \lesssim 1$ we get that
$\sup_{N}\|\mathcal R_{\eta}(f_N)\|_{H^q} \lesssim 1 $ .  Furthermore, for $N=2,3...$,
$$
\mathcal R_{\eta}(f_N)(z)= \sum_{n=0}^\infty  \eta_n\, a_{n,N}\, z^n\,, \quad z\in \mathbb D
$$
where
$$a_{n,N}= N^{-\frac {1}{p^{'}}} \sum_{k=0}^n a^k_N\,, \quad  n=0,1,2,...$$
Set
 $$\beta_{n,N} = \frac{1}{a_{n,N}}  \,.$$
For each $N=2,3,..$,
\begin{equation*}\label{aN1}
 N^{-\frac {1}{p^{'}}} n  a_N^n \leq    a_{n,N} \leq N^{-\frac {1}{p^{'}}} n
\end{equation*}
so
\begin{align}\label{aN2}
    a_{n,N} \asymp N^{\frac {1}{p}} \,,\quad N\leq n \leq 2N
\end{align}
that is
\begin{align}\label{bN1}
    b_{n,N} \asymp N^{-\frac {1}{p}} \,,\quad N\leq n \leq 2N
\end{align}
Additionally,
\begin{align}\label{aN3}
|a_{n+1,N} - a_{n,N}| \asymp  N^{-\frac {1}{p^{'}}}  \,,\quad N\leq n \leq 2N
\end{align}
thus, using (\ref{aN2}) and (\ref{aN3}), we get
\begin{align}\label{bN2}
    |b_{n+1,N} - b_{n,N}|\leq  N^{-\frac {1}{p}} \,,\quad N\leq n \leq 2N
\end{align}
Set
\begin{equation*}
    \lambda_{j,\nu} =
    \begin{cases}
        b_{j,2^{\nu}}, \quad \text{if}\quad  2^\nu \leq j \leq 2^{\nu +1}-1\\
        \\
        0 ,\quad \text{otherwise}
    \end{cases}
\nu=1,2,3,...
\end{equation*}
Due to (\ref{bN1}), (\ref{bN2})
$$
|\lambda_{j,\nu}|\leq b_{2^\nu, 2^\nu}\lesssim  2^{-\frac {\nu}{p}}
$$
and
$$
\sum_{j=1}^\infty |\lambda_{j+1,\nu}- \lambda_{j,\nu}|= b_{2^\nu,2^\nu}-b_{2^{\nu+1},2^\nu}\lesssim 2^{-\frac {\nu}{p}}\,.
$$
Theorem \ref{LP-Zy} implies that
$$
\|\Delta_{2^\nu}(g)\|_{H^q}\lesssim  2^{-\frac {\nu}{p}} \|\mathcal R_{\eta}(f_{2^{\nu}})\|_{H^q} \lesssim 2^{-\frac {\nu}{p}}
$$
that is $g \in \Lambda^q_{\frac 1q}$.
\par \medskip $(iii)$ : Suppose that $1<p\leq 2,\, p<q$ and consider $g\in \lambda^q_{\frac 1p}$. Since $g\in \Lambda^q_{1/p}$ the $\mathcal R_{\eta}\in \mathcal B(H^p,H^q)$. Moreover, from Theorem \ref{partialsumlamba} (ii),
$$
\|S_{N}(g)-g\|_{\Lambda^q_{\frac 1p}}\longrightarrow 0\,, \quad N \to \infty\,.
    $$
If $N=2,3,4,...$ then we consider the finite rank operators
\begin{align*}
\mathcal R_N(f)(z)=\sum_{n=0}^N \eta_n \sum_{k=0}^n a_k z^n \,, \quad z \in \mathbb D
\end{align*}
that map $H^p$ to $H^q$. Working as in case $(i)$ we end up that
$$
\|\mathcal R_{\eta}(f)- \mathcal R_N (f)\|_{H^q}\,\, \lesssim \,\,\|S_N(g) - g\|_{\Lambda^q_p}\,\, \|f\|_{H^p} \longrightarrow 0\,,\quad N\to \infty
$$
which implies the wanted.
\par \medskip
$(iv)$ :  Assume that $1\leq p <\infty,\, 1<q<\infty$ and  $\mathcal R_{\eta}\in \mathcal K(H^p,H^q)$. Considering the test functions (\ref{testf}) we realize that $f_N \to 0, n\to \infty,$ uniformly on the compact subsets of $\mathbb D$.  The compactness of the operator implies that $\|\mathcal R_{\eta}(f_N)\|_{H^q} \to 0,\, N\to \infty$. The same steps as in case $(ii)$ lead to $\|\Delta_{2^\nu}(g)\|_{H^q}=  o (2^{-\frac {\nu}{p}}), \nu \to \infty\,,$ that is to $g\in \lambda^q_{\frac 1p}\,.$
\end{Pf}
\par\medskip
\par\medskip
\par\medskip
\begin{Pf}{\it Theorem\,\@\ref{Hg Hardy}:} (i)  First, we assume that $1<p<q \leq 2 $. If $g\in \Lambda^q_{\frac{1}{p}},\,1<p<q \leq 2, $ then for any $f\in H^p$ and due to (\ref{Dpp-1}) and (\ref{Dpp-1 dyadic})
\begin{align*}
    \|\mathcal H_g(f)\|_{H^q}& \lesssim \|\mathcal H_g(f)\|_{D^q_{q-1}} =|\mathcal H_g(f)(0)|^q + \int_{\mathbb D} |\mathcal H_{g}(f)'(z)|^q \,(1-|z|^2)^{q-1}\,dA(z)\\
    & =|\mathcal H_g(f)(0)|^q + \int_{\mathbb D} |\mathcal H_{g'}(S(f)(z)|^q \,(1-|z|^2)^{q-1}\,dA(z)\\
    & \simeq
    |\mathcal H_{g}(f)(0)|^q +|\mathcal H_{g'}(S(f))(0)|^q + \sum_{n=0}^\infty 2^{-nq} \|\Delta_{2^n}(\mathcal H_{g'}(S(f)))\|^q_{H^q}=I_1+I_2\,. \\
    \end{align*}
\par On one hand,
\begin{align*}
I_1 &=|\mathcal H_{g}(f)(0)|^q +|\mathcal H_{g'}(S(f))(0)|^q \\
&\quad\quad \lesssim    \left( |g'(0)|^q + |g''(0)|^q \right) \left( \int_0^1 |f(t)| dt \right)^q\\
    &\quad \quad \quad \lesssim
    \left( |g'(0)|^q + |g''(0)|^q \right) \|f\|^q_{H^p}
\end{align*}
where in the last line we have used Fejer-Riesz inequality \cite{Du:Hp}\,.\\
\par On the other hand,
Lemma 7 \cite{GGPSi}  implies that
\begin{align*}
    \|\Delta_{2^n}(\mathcal H_{g'}(S(f)))\|^q_{H^q} &\lesssim \left( \int_0^1 t^{2^{n-1}+1}|f(t)| dt \right)^q\, \|\Delta_{2^n}(g'')\|^q_{H^q},,\quad n\geq 3\,.
\end{align*}
Therefore,
\begin{align*}
    \sum_{n=3}^\infty 2^{-nq} & \|\Delta_{2^n}(\mathcal H_{g'}(S(f)))\|_{H^q} \lesssim
    \sum_{n=3}^\infty 2^{-nq}\, \left( \int_0^1 t^{2^{n-1}+1}|f(t)| dt \right)^q\, \|\Delta_{2^n}(g'')\|^q_{H^q}\,.
\end{align*}
Employing the assumption $g\in \Lambda^q_{\frac 1p},$ as stated equivalently in part (vi) of Theorem \ref{th:mlip},
\begin{align*}
\sum_{n=3}^\infty 2^{-nq} & \|\Delta_{2^n}(\mathcal H_{g'}(f))\|_{H^q} \lesssim \sum_{n=3}^\infty 2^{-nq}\, \left( \int_0^1 t^{2^{n-1}+1}|f(t)| dt \right)^q\, 2^{nq(2-\frac 1p)}\\
&   =\sum_{n=3}^\infty  2^{nq(1-\frac 1p)}\, \left( \int_0^1 t^{2^{n-1}+1}|f(t)| dt \right)^q
 = \sum_{n=0}^\infty  2^{(n+3)q(1-\frac 1p)}\, \left( \int_0^1 t^{2^{n+1}+1}|f(t)| dt \right)^q\\
    & \asymp \sum_{n=0}^\infty  2^{(n+1)q(1-\frac 1p)}\, \left( \int_0^1 t^{2^{n+1}+1}|f(t)| dt \right)^{q-p} \left( \int_0^1 t^{2^{n+1}+1}|f(t)| dt \right)^{p}\,.
\end{align*}
An application of Holder and Fejer-Riesz inequalities implies that
\begin{align*}
\sum_{n=3}^\infty 2^{-nq} & \|\Delta_{2^n}(\mathcal H_{g'}(f))\|_{H^q} \lesssim \sum_{n=0}^\infty  2^{(n+1)q(1-\frac 1p)}\, \left( \int_0^1 t^{2^{n+1}+1}|f(t)| dt \right)^{q-p} \left( \int_0^1 t^{2^{n+1}+1}|f(t)| dt \right)^{p}\\
    &\lesssim \sum_{n=0}^\infty   2^{(n+1)q(1-\frac 1p)}\,
    \left( \int_0^1 t^{p'2^n} dt \right)^{\frac {q-p}{p'}} \left( \int_0^1 t^{2^{n+1}+1}|f(t)| dt \right)^{p} \|f\|^{q-p}_{H^p}\\
    & \lesssim \sum_{n=0}^\infty   2^{(n+1)q(1-\frac 1p)} \frac{1}{2^{(n+1){\frac{q-p}{p'}}}}\,\left( \int_0^1 t^{2^{n+1}+1}|f(t)| dt \right)^{p} \|f\|^{q-p}_{H^p}\\
    & = \sum_{n=0}^\infty  2^{(n+1){\frac{p}{p'}}} \,\left( \int_0^1 t^{2^{n+1}+1}|f(t)| dt \right)^{p} \|f\|^{q-p}_{H^p}\\
    & = \sum_{n=0}^\infty  2^{(n+1)(p-1)} \,\left( \int_0^1 t^{2^{n+1}+1}|f(t)| dt \right)^{p} \|f\|^{q-p}_{H^p}\\
    & = \sum_{n=0}^\infty  2^{n(p-2)} 2^n \,\left( \int_0^1 t^{2^{n+1}+1}|f(t)| dt \right)^{p} \|f\|^{q-p}_{H^p}\,.
\end{align*}
It is true that
\begin{align*}
    2^n \,\left( \int_0^1 t^{2^{n+1}+1}|f(t)| dt \right)^{p} \leq \sum_{k=2^n}^{2^{n+1}-1} \,\left( \int_0^1 t^{k+1}|f(t)| dt \right)^{p}\,.
\end{align*}
Thus
\begin{align*}
\sum_{n=3}^\infty 2^{-nq} & \|\Delta_{2^n}(\mathcal H_{g'}(f))\|_{H^q}  \lesssim \sum_{n=0}^\infty  2^{n(p-2)} \sum_{k=2^n}^{2^{n+1}-1} \,\left( \int_0^1 t^{k+1}|f(t)| dt \right)^{p} \|f\|^{q-p}_{H^p}\\
    & \asymp \sum_{n=1}^\infty (n+1)^{p-2} \left( \int_0^1 t^{n+1}|f(t)| dt \right)^{p} \|f\|^{q-p}_{H^p} 
\end{align*}
Consider the sublinear operator $\tilde{H}(f)(z)= \int_0^1 \frac{|f(t)|}{1-tz}\,dt,\, z\in \mathbb D$. Recalling  Theorem 5 and Theorem C from \cite{GGPSi} we are allowed to say that
\begin{align*}
    \sum_{n=3}^\infty 2^{-nq} & \|\Delta_{2^n}(\mathcal H_{g'}(f))\|_{H^q}  \lesssim \|\tilde{H}(f)\|^{p}_{H^p} \|f\|^{q-p}_{H^p} \lesssim \|f\|^p_{H^p} \|f\|^{q-p}_{H^p} = \|f\|^q_{H^p}\,.
\end{align*}
Combining  all the above we get that
$$
I_2 \lesssim \|f\|^q_{H^p}\,.
$$
Joining the latter with the estimation of $I_1$ we get the desired.
\par Now, assume that $1<p\leq 2 <q $. Relation (\ref{Pv1}) implies that
\begin{align*}
\|\mathcal H_g (f) \|^2_{H^q} &\lesssim \sum_{n=0}^\infty \|\Delta_{2^n} (\mathcal H_g (f))\|_{H^q}^2\,.
\end{align*}
Employing Lemma 7 \cite{GGPSi}, the assumption $g\in \Lambda^q_{\frac 1p},$ as stated equivalently in part (vi) of Theorem \ref{th:mlip}, Holder and Fejer-Riesz inequalities
\begin{align*}
    \sum_{n=3}^\infty & \|\Delta_{2^n} (\mathcal H_g)\|_{H^q}^2 \lesssim
\sum_{n=3}^\infty  \left( \int_0^1 t^{2^{n-1}+1}|f(t)| dt \right)^2\, \|\Delta_{2^n}(g')\|^2_{H^q}\\
& \lesssim \sum_{n=0}^\infty 2^{2n(1-\frac 1p)} \left( \int_0^1 t^{2^{n+1}+1}|f(t)| dt \right)^{2-p}
 \left( \int_0^1 t^{2^{n+1}+1}|f(t)| dt \right)^p\\
 & \lesssim \sum_{n=0}^\infty 2^{2n(1-\frac 1p)} 2^{-n(\frac {2-p}{p'})}\left( \int_0^1 t^{2^{n+1}+1}|f(t)| dt \right)^p\,\,\|f\|_{H^p}^{q-p}\\
 & = \sum_{n=0}^\infty 2^{n(p-1)} \left( \int_0^1 t^{2^{n+1}+1}|f(t)| dt \right)^p \,\,\|f\|_{H^p}^{q-p}\\
 & \lesssim \sum_{n=0}^\infty  2^{n(p-2)} \sum_{k=2^n}^{2^{n+1}-1} \,\left( \int_0^1 t^{k+1}|f(t)| dt \right)^{p} \|f\|^{q-p}_{H^p}\\
 & \lesssim \sum_{n=1}^\infty (n+1)^{p-2} \left( \int_0^1 t^{n}|f(t)| dt \right)^{p} \|f\|^{q-p}_{H^p} \lesssim \|f\|^p_{H^p} \|f\|^{q-p}_{H^p} =\|f\|^q_{H^p}\,.
\end{align*}
\par (ii) Assume that $1\leq p < \infty,\, 1<q<\infty$ and  that $\mathcal H_g\in \mathcal B(H^p, H^q)$ that is
$$
\|\mathcal H_g (f)\|_{H^q} \lesssim \|f\|_{H^p} \,, \quad f\in H^p\,.
$$
For $N=2,3...$ let $a_N=1-\frac 1N$ and
$$
f_N(z)=N^{-\frac {1}{p^{'}}} \sum_{n=0}^\infty a^n_N z^n =\frac{N^{-\frac {1}{p^{'}}}}{1-a_N z}\,, \quad z\in \mathbb D.
$$
Since $
\|f_N\|_{H^p} \lesssim 1$ for all $N$ it holds that $\|\mathcal H_g (f_N)\|_{H^q} \lesssim 1 $ for all $N$.
Additionally
$$
\mathcal H_g (f_N)(z)= \sum_{n=0}^\infty  (n+1)\eta_{n+1} a_{n,N} z^n
$$
where $$a_{n,N}= N^{-\frac {1}{p^{'}}} \sum_{k=0}^\infty \frac{a^k_N}{n+k+1} \,, \quad  n=0,1,2,...$$
Moreover, if $ N \leq n \leq 2N$ then
\begin{align*}
    a_{n,N}\leq N^{-\frac {1}{p^{'}}} \frac {1}{n+1} \frac{1}{1-a_N} \asymp N^{-\frac {1}{p^{'}}}
\end{align*}
and
\begin{align*}
    a_{n,N}\geq N^{-\frac {1}{p^{'}}} \sum_{k=N}^{2N}  \frac{a^k_N}{n+k+1} 
    \gtrsim N^{-\frac {1}{p^{'}}}
\end{align*}
As a consequence,
\begin{align*}
    |a_{n+1,N}-a_{n,N}|&\leq N^{-\frac {1}{p^{'}}} \left|  \sum_{k=0}^\infty  a^k_N \left(\frac{1}{n+k+2} - \frac{1}{n+k+1}\right) \right|\\
    & = N^{-\frac {1}{p^{'}}}  \sum_{k=0}^\infty  a^k_N \frac{1}{(n+k+1)(n+k+2)}\\
    & \lesssim  N^{-\frac {1}{p^{'}}} \frac{1}{N} \sum_{k=0}^\infty  a^k_N = N^{-\frac {1}{p^{'}}}
\end{align*}

If we set $$\beta_{n,N} = \frac{1}{a_{n,N}} $$ then, for $ N \leq n \leq 2N$,
$$
\beta_{n,N} \lesssim N^{\frac {1}{p^{'}}}
$$
and
$$
|\beta_{n+1,N}-\beta_{n,N}| \lesssim N^{\frac {1}{p^{'}}}\,.
$$

Let an $\nu \in \mathbb N $ and set
\begin{equation*}
    \lambda_{j,\nu} =
    \begin{cases}
        \beta_{j,2^{\nu}}, \quad \text{if}\quad  2^\nu \leq j \leq 2^{\nu +1}-1\\
        \\
        0 ,\quad \text{otherwise}
    \end{cases}
\end{equation*}
Then,
$$
|\lambda_{j,\nu}|\leq \beta_{2^\nu, 2^\nu}\lesssim 2^{\frac {\nu}{p'}}
$$
and
$$
\sum_{j=1}^\infty |\lambda_{j+1,\nu}- \lambda_{j,\nu}|= \beta_{2^\mu,2^\nu}-\beta_{2^{\nu+1},2^\nu}\lesssim  2^{\frac {\nu}{p'}}
$$
Due to Theorem \ref{LP-Zy}
$$
\|\Delta_{2^\nu}(g')\|_{H^q}\lesssim A_p\,  2^{\frac {\nu}{p'}} \|\mathcal H_g (f_{2^{\nu}})\|_{H^q} \lesssim A_p\,  2^{\nu (1-\frac {1}{p})}
$$
which implies that $g \in \Lambda^q_{\frac 1p} $ according to part (vi) of Theorem \ref{th:mlip}.
\par $(iii)$  Assume that $ 1<p<q \leq 2,$, $g\in\lambda^q_{\frac 1p}$ and consider a sequence $\{f_N\}_{N=1}^\infty$ in $H^p$ such that $\sup_{N}\|f_N\|_{H^p}<\infty$ and that $f_N \to 0, N\to \infty,$ uniformly on compact subsets of $\mathbb D$.
As stated by Lemma 10 \cite{GGPSi} we want to prove that
$$\|\mathcal H_{g}(f_N)\|_{H^q} \to 0,\quad  N \to \infty\,.$$
So, working as in case $(i)$
\begin{align*}
    \|\mathcal H_g(f_N)\|^q_{H^q} \lesssim & \|\mathcal H_g(f_N)\|^q_{D^q_{q-1}} \asymp
    |\mathcal H_{g}(f_N)(0)|^q +|\mathcal H_{g'}(S(f_N))(0)|^q \\ & \quad \quad\quad\quad\quad\quad \quad\quad+ \sum_{n=0}^\infty 2^{-nq} \|\Delta_{2^n}(\mathcal H_{g'}(S(f_N)))\|^q_{H^q}\,.
\end{align*}
Recall that
\begin{align*}
    |\mathcal H_{g}(f_N)(0)|^q +|\mathcal H_{g'}(S(f_N))(0)|^q
     \lesssim   \left( |g'(0)|^q + |g''(0)|^q \right) \left( \int_0^1 |f_N(t)| dt \right)^q \to 0,\quad N \to \infty
\end{align*}
as a consequence of Lemma 9 \cite{GGPSi}\,.
\par On the other hand, the assumption that $g\in \lambda^q_p$ implies that if we choose an $\epsilon >0$ then there exists $n_0\in \mathbb N, n_0 >3,$ such that
$$
\frac {\|\Delta_{2^n}(g'')\|_{H^q}}{2^{n(1-\frac 1p)}}<\epsilon, \quad n\geq n_0 \,.
$$
Thus
\begin{align*}
    \sum_{n=0}^\infty 2^{-nq} & \|\Delta_{2^n}(\mathcal H_{g'}(S(f_N)))\|_{H^q}\\
    & =  \sum_{n=0}^{n_0-1} 2^{-nq}  \|\Delta_{2^n}(\mathcal H_{g'}(S(f_N)))\|_{H^q} +  \sum_{n=n_0}^\infty 2^{-nq}  \|\Delta_{2^n}(\mathcal H_{g'}(S(f_N)))\|_{H^q} \\
    &=I_1 + I_2 \,.
\end{align*}
Using again  Lemma 9 \cite{GGPSi} we get that
\begin{align*}
I_1=\sum_{n=0}^{n_0-1} 2^{-nq}  \|\Delta_{2^n}(\mathcal H_{g'}(S(f_N)))\|_{H^q} \to 0\,, \quad N\to \infty \,.
\end{align*}
Now, employing Lemma 7 \cite{GGPSi} and the same steps as in case (i)
\begin{align*}
I_2 & = \sum_{n=n_0}^\infty 2^{-nq}  \|\Delta_{2^n}(\mathcal H_{g'}(S(f_N)))\|_{H^q}\\
& \lesssim \sum_{n=n_0}^\infty 2^{-nq}\, \left( \int_0^1 t^{2^{n-1}+1}|f_N(t)| dt \right)^q\, \|\Delta_{2^n}(g'')\|^q_{H^q}\\
    & \lesssim \epsilon  \sum_{n=n_0}^\infty  2^{nq(1-\frac 1p)}\, \left( \int_0^1 t^{2^{n-1}+1}|f_N(t)| dt \right)^q \\
    & \lesssim \epsilon \|\tilde{H}(f_N)\|^{p}_{H^p} \|f_N\|^{q-p}_{H^p}  \lesssim \epsilon \|f_N\|^p_{H^p} \|f_N\|^{q-p}_{H^p} \lesssim \epsilon\,.
\end{align*}
Since  $\epsilon>0$ is arbitrary it holds that
$\lim_{N\to \infty} \|\mathcal H_g(f_N)\|^q_{H^q} =0 $
\par The same structure as above combined with the steps used in the proof of case $(i)$ when $1<p\leq 2< q$  give the wanted. We omit the details.
\par $(iv)$ Let  $1\leq p < \infty,\, 1<q<\infty$. We suppose that the operator is compact. We consider the test functions
$$
f_N(z)=N^{-\frac {1}{p^{'}}} \sum_{n=0}^\infty a^n_N z^n =\frac{N^{-\frac {1}{p^{'}}}}{1-a_N z}\,, \quad z\in \mathbb D\,,
$$
where $a_N=1-\frac 1N, N=2,3...$. Since $f_N \to 0, N\to \infty,$ uniformly on the compact subsets of $\mathbb D$ and $\sup_N \|f_N\|_{H^p}<\infty\,,$  the assumption that the operator is compact and Lemma 10 \cite{GGPSi} implies that $\lim_{N\to \infty} \|\mathcal H_g(f_N)\|_{H^q} =0\,.$
Now, thinking the same way as in part $(ii)$ we reach the wanted.
\end{Pf}
\\

\begin{Pf} {\it  Theorem\,\@\ref{Rhaly Bergman}:} $(i)$ Suppose that $1<p<q<\infty,\, \frac{2}{p}-\frac{1}{q}<1$ and
$g\in \Lambda^q_{\frac{2}{p}-\frac{1}{q}}$.  Then
\begin{align*}
    \|\mathcal R_{\eta}(f)\|_{A^q}^q & \asymp \|S \circ \mathcal R_{\eta}(f) \|_{{A^q}} \asymp \int_{\mathbb D} |(S \circ \mathcal R_{\eta}(f))'(z)|^q (1-|z|^2)^q dA(z)\\
    &= \int_0^1 M^q_q((S \circ \mathcal R_{\eta}(f))',r) \,(1-r^2)^q \, dr
    \lesssim \int_0^1 M^q_q((S \circ \mathcal R_{\eta}(f))',r) \,(1-r)^q \, dr\\
    & = 2 \int_0^1 M^q_q((S \circ \mathcal R_{\eta}(f))',r^2) \,(1-r^2)^q \,r\, dr\lesssim \int_0^1 M^q_q((S \circ \mathcal R_{\eta}(f))',r^2) \,(1-r)^q \,r\, dr \,.
    \end{align*}

Employing (\ref{Rhaly conv}),(\ref{Mprsquare}), (\ref{F}) and Holder's inequality
\begin{align*}
\|\mathcal R_{\eta}(f)\|_{A^q}^q    &\lesssim \int_0^1 M^q_q(D(g)\star F, r^2) (1-r)^q \,r \,dr
    \lesssim \int_0^1 M^q_q(D(g),r) M^q_1(F, r) (1-r)^q \,r \,dr\\
    & \lesssim \int_0^1 M^q_q(g',r)  M^q_1(F, r) (1-r)^q \,r \,dr
     \lesssim  \int_0^1 M^q_q(g',r)  M^q_p(f, r) \frac{(1-r)^q}{(1-r)^{\frac {q}{p}}} \,dr\\
     &= \int_0^1 M^q_q(g',r) M^p_p(f, r)  M^{q-p}_p(f, r) (1-r)^{q(1-
        \frac{1}{p})} \,dr\\
    &\lesssim \int_0^1 M^q_q(g',r) M^p_p(f, r)  \frac{(1-r)^{q(1-
            \frac{1}{p})}}{(1-r)^{\frac {q-p}{p}}} \,dr\, \|f\|^{q-p}_{A^p}\\
        & \lesssim \sup_{r} M^q_q(g',r)(1-r)^{q\big(1-(\frac 2p -\frac 1q)\big)} \,\,\|f\|_{A^p}^p\,\,\|f\|_{A^p}^q \leq \|g\|^q_{q,(\frac 2p -\frac 1q)}\,\,\|f\|^q_{A^p}
\end{align*}
which leads to the boundedness of the operator.\\
\par On the opposite direction, we assume that $\mathcal R_{\eta}\in \mathcal B(A^p,A^q), 1<p<q<\infty,\, \frac{2}{p}-\frac{1}{q}<1\,.$
Let $$  f_N (z)= \frac{1}{N^{\frac {2}{p'}}} \frac{1}{(1-a_N z)^2} \,,\quad N=2,3,.... $$
for which  holds that $\|f_N\|_{A^p} \lesssim 1$ for every $N$.

\par We set
$$a_{n,N}=  \frac{1}{N^{\frac {2}{p'}}} \sum_{k=0}^n (k+1) a_N^k\,. $$ then  for $N\leq n \leq 2N,$
$$
a_{n,N} \asymp N^{\frac 2p}\quad \text{and} \quad
|a_{n+1, N}-a_{n,N}| \asymp   N^{1-\frac {2}{p'}}\,.
$$
\par If $$b_{n,N}=\frac{1}{a_{n,N}}$$ then, for $N\leq n \leq 2N$,
$$
b_{n,N} \asymp N^{-\frac 2p}\quad \text{and} \quad
|b_{n+1, N}-b_{n,N}| \asymp   N^{1-\frac {2}{p'} - \frac{4}{p}}= N^{-(1+ \frac{2}{p})} \leq  N^{-\frac{2}{p}}\,.
$$
Take an $\nu \in \mathbb N $ and set
\begin{equation*}
    \lambda_{j,\nu} =
    \begin{cases}
        \beta_{j,2^{\nu}}, \quad \text{if}\quad  2^\nu \leq j \leq 2^{\nu +1}-1\\
        \\
        0 ,\quad \text{otherwise}
    \end{cases}
\end{equation*}
Therefore
$$
|\lambda_{j,\nu}|\leq \beta_{2^\nu, 2^\nu}\lesssim  2^{-\frac {2\nu}{p}}
$$
and
$$
\sum_{j=1}^\infty |\lambda_{j+1,\nu}- \lambda_{j,\nu}|= \beta_{2^\nu,2^\nu}-\beta_{2^{\nu+1},2^\nu}\lesssim  2^{-\frac {2\nu}{p}}
$$
Therefore, using Theorem \ref{LP-Zy}
\begin{align*}
\|\Delta_{2^\nu}(g)\|_{H^q} & \lesssim  2^{-\frac {2\nu}{p}} \|\Delta_{2^\nu} \mathcal (R_{\eta}(f_{2^{\nu}}))\|_{H^q}\\
\\
& \asymp  2^{- \nu(\frac {2}{p}-\frac 1q)} \| \Delta_{2^\nu}\mathcal (R_{\eta}(f_{2^{\nu}}))\|_{A^q} \lesssim 2^{- \nu(\frac {2}{p}-\frac 1q)} \|\mathcal R_{\eta}(f_{2^{\nu}}))\|_{A^q}\\
\\
&  \lesssim 2^{- \nu(\frac {2}{p}-\frac 1q)}
\end{align*}
\\
which implies that $g \in \Lambda^q_{\frac {2}{p}-\frac 1q}$. \\
\par $(ii) :$  Assume that $1<p<q<\infty,\, \frac{2}{p}-\frac{1}{q}<1$ and $g\in\lambda^q_{\frac {2}{p}-\frac 1q}.$ Since $g\in\Lambda^q_{\frac {2}{p}-\frac 1q}$
the $\mathcal R_{\eta}\in \mathcal B(A^p,A^q)$. Moreover, from Theorem \ref{partialsumlamba} (ii), we are aware that
$$
\|S_{N}(g)-g\|_{\Lambda^q_{\frac {2}{p}-\frac 1q}}\longrightarrow 0\,, \quad N \to \infty,
$$
If $N=2,3,4,...$ then we consider the finite rank operators
\begin{align*}
    \mathcal R_N(f)(z)=\sum_{n=0}^N \eta_n \sum_{k=0}^n a_k z^n \,, \quad z \in \mathbb D
\end{align*}
that map $A^p$ to $A^q$. Working as in case $(i)$ we end up that
$$
\|\mathcal R_{\eta}(f)- \mathcal R_N (f)\|_{A^q}\,\, \lesssim \,\,\|S_N(g) - g\|_{\Lambda^q_{\frac {2}{p}-\frac 1q}}\,\, \|f\|_{A^p} \longrightarrow 0\,,\quad N\to \infty\,.
$$
which implies the wanted.
\par Thinking the other way around, we assume that $\mathcal R_{\eta}\in \mathcal K(A^p,A^q), 1<p<q<\infty,\, \frac{2}{p}-\frac{1}{q}<1$. Next, we consider the family of functions $$  f_N (z)= \frac{1}{N^{\frac {2}{p'}}} \frac{1}{(1-a_N z)^2} \,,\quad N=2,3,.... ,$$
for which is true that $\sup_N \|f_N\|_{A^p} \lesssim 1$  and $f_N \to 0, N\to \infty,$  uniformly on the compact subsets of the unit disc. Therefore
$\mathcal R_{\eta}(f_N) \to 0, N\to \infty$ in $A^q$. Working as in the part of the proof of (i)  related to the necessary conditions and employing Theorem \ref{LP-Zy} we get that $g\in\lambda^q_{\frac {2}{p}-\frac 1q}.$
\end{Pf}

\medskip
\begin{Pf}{\it Theorem\,\@\ref{Rhaly Dpa Dpb}:} $(i)$ Let $g\in \Lambda^p_s,\, s=\frac{a-b+1}{p},$ then for every $f\in D^p_a$
\begin{align*}
    \|\mathcal R_{\eta}(f) \|^p_{D^p_b} &= |\mathcal R_{\eta}(f)(0)|^p +
    \int_{\mathbb D} |\mathcal R_{\eta}(f)'(z) |^p \, (1-|z|^2)^{b}\, dA(z)\\
    & \asymp |\mathcal R_{\eta}(f)(0)|^p +  |\mathcal R_{\eta}(f)'(0)|^p +
    \int_{\mathbb D} |\mathcal R_{\eta}(f)''(z) |^p \, (1-|z|^2)^{b+p}\, dA(z)\\
    & = |\eta_0|^p |a_0|^p + |\eta_1|^p (|a_0|+|a_1|)^p + \int_0^1 (1-r^2)^{b+p} \, M^p_p(\mathcal R_{\eta}(f)'',r) \,r\,dr\,. \\
\end{align*}
We set $F(z)=\frac{f(z)}{1-z}\,,\, z\in \mathbb D\,.$ Applying that $g\in \Lambda^p_s$
\begin{align*}
    \int_0^1 & (1-r^2)^{b+p} \, M^p_p(\mathcal R_{\eta}(f)'',r) \,r\,dr \lesssim \int_0^1 (1-r^2)^{b+p} \, M^p_p(g'\star F',r) \,r\,dr\\
    &\,\,\,\, \lesssim \int_0^1 (1-r^2)^{b+p} \, M^p_p(g',r)\,M^p_1(F',r)\,dr \lesssim \int_0^1 (1-r^2)^{b+sp} \,M^p_1(F',r)\,dr\\
    & \lesssim \int_0^1 (1-r^2)^{b+sp} \,M^p_1(\phi,r)\,dr + \int_0^1 (1-r^2)^{b+sp} \,M^p_1(\psi,r)\,dr\doteqdot I_1 + I_2
    \end{align*}
where
$ \phi(z)=\frac{f'(z)}{1-z}\,,\,\, \psi(z)=\frac{f(z)}{(1-z)^2}$\,\,.

Now,
\begin{align*}
    I_1 &\doteqdot \int_0^1 (1-r)^{b+sp} \,M^p_1(\phi,r)\,dr \lesssim \int_0^1 (1-r)^{b+sp-p} \,M^p_p(f',r)\,dr \\
    & = \int_0^1 (1-r)^{a} \,M^p_p(f',r)\,dr
\end{align*}
and, for $\gamma >1$ such that $a>p-2+\frac{1}{\gamma},$
\begin{align*}
    I_2 &\doteqdot \int_0^1 (1-r)^{b+sp} \,M^p_1(\psi,r)\,dr \lesssim  \int_0^1 (1-r)^{b+sp} \frac{1}{(1-r)^{p-\frac{1}{\gamma}}} \,M^p_{\gamma p}(f,r)\,dr\\
    & = \int_0^1 (1-r)^{a+1-p-\frac{1}{\gamma}} \,M^p_{\gamma p}(f,r)\,dr
    \asymp  \int_0^1 (1-r)^{a+1-\frac{1}{\gamma}} \,M^p_{\gamma p}(f',r)\,dr\,.
\end{align*}
Since
$$
M_{\gamma p}(f',r)\lesssim \frac{1}{(1-r)^{\frac{1}{p}-\frac{1}{\gamma p}}} M_{ p}(f',r)
$$
we get that
\begin{align*}
        I_2 & \lesssim \int_0^1 (1-r)^{a} \,M^p_{ p}(f',r)\,dr\,.
\end{align*}
\\
\\
On the other hand, assume that $\mathcal R_{\eta} \in \mathcal B(D^p_a,D^p_b)$. If
$$
g_N(z)= \frac{1}{N^{2-\frac{a+2}{p}}}\,\, \frac{a_N z}{1-a_N z}\,,\, z \in \mathbb D
$$
where $a_n=1-\frac 1N, \,\,N=2,3,...$, then
\begin{align*}
    \|g_N\|^p_{D^p_a}= \int_{\mathbb D} (1-|z|^2)^a |g'_N (z)|^p \, dA(z) \lesssim 1
\end{align*}
so $\|\mathcal R(g_N)\|_{D^p_a} \lesssim 1$ for every $N=2,3,...$
\\
\par We define $a_{n,N}= \frac{1}{N^{2-\frac{a+2}{p}}} \sum_{k=0}^n a^k_N$ then, for $N\leq n \leq 2N$,
\begin{align*}
&   a_{n,N} \asymp N^{-(1-\frac{a+2}{p})}\\
& a_{n+1,N}-a_{n,N}=N^{-(2-\frac{a+2}{p})} a^{n+1}_N \asymp  N^{-(2-\frac{a+2}{p})}\,.
\end{align*}
Set $b_{n,N}=\frac{1}{a_{n,N}}$ thus
\begin{align*}
    &  b_{n,N} \asymp N^{1-\frac{a+2}{p}}
    &   b_{n+1,N} - b_{n,N} \asymp N^{-\frac{a+2}{p}}\,,\quad n\leq n \leq 2N\,.\\
\end{align*}
Take a $\nu \in \mathbb N $ and set
\begin{equation*}
    \lambda_{j,\nu} =
    \begin{cases}
        \beta_{j,2^{\nu}}, \quad \text{if}\quad  2^\nu \leq j \leq 2^{\nu +1}-1\\
        \\
        0 ,\quad \text{otherwise}
    \end{cases}
\end{equation*}
Therefore
$$
|\lambda_{j,\nu}|\leq \beta_{2^\nu, 2^\nu}\lesssim  2^{\nu (1-\frac{a+2}{p})}
$$
and
$$
\sum_{j=1}^\infty |\lambda_{j+1,\nu}- \lambda_{j,\nu}|= \beta_{2^\mu,2^\nu}-\beta_{2^{\nu+1},2^\nu}\lesssim 2^{\nu (1-\frac{a+2}{p})}
$$
Then
\begin{align*}
\|\Delta_{2^\nu}(D(g))\|_{H^p}& \lesssim 2^{\nu (1-\frac{a+2}{p})} \|\Delta_{2^\nu}(D\mathcal R_{\eta}(f_{2^{\nu}}))\|_{H^p} \\
& \\
& \asymp 2^{\nu \frac{1+b}{p}} 2^{\nu (1-\frac{a+2}{p})} \|\Delta_{2^\nu}(D\mathcal R_{\eta}(f_{2^{\nu}}))\|_{A^p_b} \\
& \\
& \lesssim 2^{\nu (1-\frac{a-b+1}{p})}
 \|\mathcal R_{\eta}(f_{2^{\nu}})\|_{D^p_b} \lesssim 1 \,, \quad \nu\in \mathbb N
\end{align*}
\\
which implies that $g \in \Lambda^p_{s}$ for $s= \frac{a-b+1}{p}$\,\,.\\
\par $(ii)$  The proof is similar as before. We omit the details.
\quad \quad \end{Pf}
\par\medskip
\begin{Pf} {\it Theorem\,\@\ref{Hg on Sp}:}
$(ii) \Rightarrow (i)$  It results  immediately.\\
\\
$(i) \Rightarrow (iii)$  Assume that $1<p<q<\infty$ and that
$$
\|\mathcal H_g (f)\|_{S^q} \lesssim \|f\|_{S^p}\,, \quad f\in S^p
$$
then the
$$
\mathcal H_g(1)=  \sum_{n=0}^\infty \eta_{n+1} z^n = \frac{g(z)-g(0)}{z} \in S^q
$$
that is $g\in S^q$.
\\
\\
 $(iii) \Rightarrow (i)$  Suppose that $1<p<q<\infty$ and that
 $$g\in S^q \Leftrightarrow g'\in H^q\,.$$
  It is enough to prove
$$
\|D(\mathcal H_g(f))\|_{H^q} \lesssim \|f\|_{S^p}\,, \quad f\in S^p\,.
$$
Since
$$
\mathcal H_g(f)(z)= \sum_{n=0}^\infty (n+1)\eta_{n+1} \sum_{k=0}^\infty \frac{a_k}{n+k+1}\, z^n\,, \quad z\in \mathbb D
$$
then
\begin{align*}
D(\mathcal H_g(f))(z)& =\sum_{n=0}^\infty (n+1)^2 \eta_{n+1} \sum_{k=0}^\infty \frac{a_k}{n+k+1}\, z^n\\
& = \big(D(\mathcal H(f)) \star g'\big) (z)\,, \quad z\in \mathbb D\,.
\end{align*}
Set $\lambda_{n}=(n+1)\sum_{k=0}^\infty \frac{a_k}{n+k+1}$ then
\begin{align*}
|\lambda_{n}|\leq (n+1)\sum_{k=0}^\infty \frac{|a_k|}{n+k+1}\leq \sum_{k=0}^\infty |a_k|\lesssim \|f\|_{S^p}
\end{align*}
where the last inequality is justified by the fact that
$$
f \in S^p \Longleftrightarrow  f' \in H^p \subseteq H^1
$$
and by the Fejer-Riesz inequality applied on $f'$ that is
$$ \sum_{k=0}^\infty |a_{k+1}| \lesssim \|f'\|_{H^1}\,.$$
Moreover, for each $n$,
\begin{align*}
    \sum_{j=2^n}^{2^{n+1}-1}&|\lambda_{j+1}-\lambda_j| =\sum_{j=2^n}^{2^{n+1}-1} \left|(j+2)\sum_{k-0}^\infty \frac{a_k}{j+k+2}-(j+1)\sum_{k=0}^\infty \frac{a_k}{j+k+1}\right|\\
    &= \sum_{j=2^n}^{2^{n+1}-1}\left|\sum_{k-0}^\infty \frac{a_k}{j+k+2} + (j+1) \left(\sum_{k-0}^\infty \frac{a_k}{j+k+2}- \sum_{k=0}^\infty \frac{a_k}{j+k+1}\right) \right|\\
    &= \sum_{j=2^n}^{2^{n+1}-1}\left|\sum_{k-0}^\infty \frac{a_k}{j+k+2} + (j+1) \sum_{k-0}^\infty a_k \left( \frac{1}{j+k+2}- \frac{1}{j+k+1}\right)  \right|\\
    & =\sum_{j=2^n}^{2^{n+1}-1}\left|\sum_{k-0}^\infty \frac{a_k}{j+k+2} + (j+1) \sum_{k-0}^\infty \frac{a_k} {(j+k+2) (j+k+1)} \right|\\
    & \leq \sum_{j=2^n}^{2^{n+1}-1} \left( \sum_{k-0}^\infty \frac{|a_k|}{j+k+2}
    + (j+1) \sum_{k-0}^\infty \frac{|a_k|} {(j+k+2) (j+k+1)} \right)\\
    &\lesssim \sum_{j=2^n}^{2^{n+1}-1} \frac 1j \,\, \sum_{k=0}^\infty |a_k| \lesssim \|f\|_{S^p}\,.
\end{align*}
Therefore, applying  Theorem \ref{LP-Zy}
\begin{align*}
\|\mathcal H_g(f)\|_{S^q} \asymp \| D(\mathcal H_g (f) )\|_{H^q} \lesssim \|g'\|_{H^q} \|f\|_{H^p}
\end{align*}
which proves the wanted.\\
\\
 $(iii) \Longrightarrow (i)$ Since $g\in S^q\,\, (q>1)$ it holds that
$$
\|S_N(g') - g'\|_{H^q} \longrightarrow 0\,,\quad N\to \infty\,.
$$
For $N=2,3,...$ we define
$$
(\mathcal H_{g})_N(f)= \sum_{n=0}^N (n+1)\eta_{n+1} \sum_{k=0}^\infty \frac{a_k}{n+k+1} z^n\,,\quad z\in \mathbb D
$$
where $f(z)=\sum_{k=0}^\infty a_k z^k \in S^q\,.$

Set
\begin{equation*}
    \lambda_{n}=
\begin{cases}
    0 \quad\quad n=0,1,2,....,N-1\\
    \\
    (n+1)\sum_{k=0}^\infty \frac{a_k}{n+k+1} \,, \quad n=N,N+1,...
\end{cases}
\end{equation*}
then
\begin{align*}
    |\lambda_{n}|\leq (n+1)\sum_{k=0}^\infty \frac{|a_k|}{n+k+1}\leq \sum_{k=0}^\infty |a_k|\lesssim \|f\|_{S^p}
\end{align*}
and
\begin{align*}
    \sum_{j=2^n}^{2^{n+1}-1}&|\lambda_{j+1}-\lambda_j| =\sum_{j=2^n}^{2^{n+1}-1} \left|(j+2)\sum_{k-0}^\infty \frac{a_k}{j+k+2}-(j+1)\sum_{k=0}^\infty \frac{a_k}{j+k+1}\right|\\
    &\lesssim \sum_{j=2^n}^{2^{n+1}-1} \frac 1j \,\, \sum_{k=0}^\infty |a_k| \lesssim \|f\|_{S^p}\,.
\end{align*}
Thus,using Theorem \ref{LP-Zy}
\begin{align*}
    \|\mathcal H_g(f) - &(\mathcal H_g)_N (f)\|_{S^q}= \|\left(\mathcal H_g(f) - (\mathcal H_g)_N (f)\right)'\|_{H^q} \\
    & \simeq \|D\left(\mathcal H_g(f) - (\mathcal H_g)_N (f)\right)\|_{H^q}= \|D\left(\mathcal H_g(f) \right) - D\left((\mathcal H_g)_N (f)\right)\|_{H^q}\\
    &\quad \lesssim \|S_N(g') - g'\|_{H^q}\,\, \|f\|_{S^p}
    \end{align*}
which implies the compactness. \quad \quad \end{Pf}
\section{Final remarks}
\par Looking at Theorem \ref{summing up}, Theorem \ref{summing up2}, Theorem \ref{Rhaly Hardy} and Theorem \ref{Hg Hardy} it is natural to wander what happens with case $\mathcal R_{\eta}: H^p \to H^q,\,\, 2<p\leq q <\infty.$ So far, due to \cite{GGPSi},\cite{GG-2026JGA}, \cite{Bl-Ga-Gi2} we are aware of the succeeding conditions.
\begin{other}\label{Rhaly Hg qp}
    Let  $g(z)=\sum_{n=0}^\infty \eta_n z^n \in \hol(\mathbb D)$ and assume that $2<s< p < \infty$. \\
    \\
    $(i)$ If $g \in \Lambda^s_{\frac 1s} \,\,(\lambda^s_{\frac 1s})$ then
    $ \mathcal R_{\eta}\,,\,\,\mathcal H_g \in \mathcal B(H^p)\,\, (\mathcal K(H^p))$\\
    \\
    $(ii)$ If $ \mathcal R_{\eta}\,,\,\,\mathcal H_g \in \mathcal B(H^p)\,\, (\mathcal K(H^p))$ then $ g \in \Lambda^p_{\frac 1p}\,\, (\lambda^p_{\frac 1p})$\,.
\end{other}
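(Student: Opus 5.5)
Part (ii) should reduce to the necessity arguments already given, taken with $q=p$. For $\mathcal R_{\eta}$ this is exactly Theorem \ref{Rhaly Hardy}(ii) and (iv), which are stated for all $1\le p<\infty$, $1<q<\infty$, including $q=p$. For $\mathcal H_g$, Theorem \ref{Hg Hardy}(ii), (iv) are stated only for $p<q$. I will check that their proofs never use $p<q$: the test functions $f_N=N^{-1/p'}(1-a_Nz)^{-1}$, the two-sided bounds on the resulting coefficients on $N\le n\le 2N$, and the multiplier Theorem \ref{LP-Zy} work verbatim in $H^p$. This gives $\|\Delta_{2^\nu}(g')\|_{H^p}\lesssim 2^{\nu(1-1/p)}$, with a little-oh version under compactness. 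By Theorem \ref{th:mlip}(vi) this is $g\in\Lambda^p_{1/p}$ ($\lambda^p_{1/p}$).

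For part (i) I would work dyadically, since $p>2$ and (\ref{Pv1}) gives $\|h\|_{H^p}^2\lesssim\sum_n\|\Delta_{2^n}h\|_{H^p}^2$. For $\mathcal R_{\eta}$, write $\Delta_{2^n}(\mathcal R_\eta f)=\Delta_{2^n}(g)\star\Delta_{2^n}(F)$ with $F=f/(1-z)$ as in (\ref{F}). Split the coefficients $\sum_{k\le j}a_k$ of $\Delta_{2^n}F$ ($2^n\le j<2^{n+1}$) into two parts.
\begin{itemize}
\item The constant part $c_n=S_{2^n-1}f(1)$, multiplying $\Delta_{2^n}\big(\tfrac1{1-z}\big)$. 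This contributes $|c_n|\,\|\Delta_{2^n}(g)\|_{H^p}$.
\item The intra-block part, the partial sums of $\Delta_{2^n}f$. Its coefficient multiplier is bounded with bounded variation on the block, so by Theorem \ref{LP-Zy} and the M.~Riesz projection it is controlled by $\|\Delta_{2^n} g\|$ times a norm of $\Delta_{2^n}f$.
\end{itemize}
For both parts I would invoke Young's inequality on the circle with exponents $\frac1s+\frac1t=1+\frac1p$. Since $t<p$, this gives $\|\Delta_{2^n}g\star h\|_{H^p}\le\|\Delta_{2^n}g\|_{H^s}\|h\|_{H^t}$ with $\|\Delta_{2^n}g\|_{H^s}\lesssim 2^{-n/s}$, using Theorem \ref{th:mlip}(iv).

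The intra-block terms then give $\sum_n 2^{-2n/s}\|f\|_{H^p}^2<\infty$. The constant terms are the delicate ones. Estimating $|c_n|\lesssim 2^{n/p}\|f\|_{H^p}$ and $\|\Delta_{2^n}(1/(1-z))\|_{H^t}\asymp 2^{n(1-1/t)}$ yields only $O(1)$ per block, which is not square summable. This is the main obstacle. I would try to exploit the strict gap $s<p$ in two steps. First, use the inclusion (\ref{nested}) with an intermediate exponent $s<\sigma<p$, so that $g\in\Lambda^\sigma_{1/\sigma}$ as well as $\Lambda^s_{1/s}$. Second, replace the pointwise bound on $c_n$ by an averaged one. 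Concretely, use $c_n=\frac1{2\pi}\int f(e^{i\theta})D_{2^n}(\theta)\,d\theta$ together with a Hardy--Littlewood type estimate (Theorem \ref{HardyLittle}, or the $\ell^p$-bound for the Ces\`aro-type means behind Theorem \ref{summing up}) to obtain $\sum_n 2^{-2n/\sigma}|c_n|^2\lesssim\|f\|^2_{H^p}$. Balancing $\sigma$ against $t$ should then give a geometric gain $2^{-n\varepsilon}$ in each block.

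For $\mathcal H_g$ I would proceed in the same spirit. Lemma 7 of \cite{GGPSi} gives $\|\Delta_{2^n}\mathcal H_g f\|_{H^p}\lesssim\big(\int_0^1t^{2^{n-1}}|f(t)|dt\big)\|\Delta_{2^n}g'\|_{H^p}$. Passing from the $H^p$ to the $H^s$ norm of $\Delta_{2^n}g'$ costs a factor $2^{n(1/s-1/p)}$, but yields the decay $\|\Delta_{2^n}g'\|_{H^s}\lesssim 2^{n(1-1/s)}$. Summability would then follow from the Fej\'er--Riesz bound $\int_0^1 t^{2^{n-1}}|f(t)|dt\lesssim 2^{-n(1-1/p)}\ldots$, refined through the sublinear Hilbert operator $\tilde H$ exactly as in the proof of Theorem \ref{Hg Hardy}(i). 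The compactness statements should follow, as in the earlier proofs, by approximating $g$ with $S_Ng$ via Theorem \ref{partialsumlamba}(ii) and applying the same estimates to $g-S_Ng$.
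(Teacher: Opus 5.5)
The paper does not prove this statement. It only records it in the final remarks as known from the earlier papers cited there. Your part (ii) is fine: the necessity proofs of Theorems \ref{Rhaly Hardy}(ii),(iv) and \ref{Hg Hardy}(ii),(iv) never use $p<q$, and with $q=p$ they give $g\in\Lambda^p_{1/p}$ ($\lambda^p_{1/p}$). Part (i), however, has a genuine gap.

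\textbf{Where part (i) fails.} Your first step, $\|h\|_{H^p}^2\lesssim\sum_n\|\Delta_{2^n}h\|_{H^p}^2$ from (\ref{Pv1}), loses too much for an operator from $H^p$ into $H^p$ with $p>2$. The right-hand side can be infinite, so no block-by-block estimate can close the argument.

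Take $g(z)=\log\frac{1}{1-z}$, which lies in $\Lambda^s_{1/s}$ for every $s>1$. Then $\mathcal H_g$ is the Hilbert operator and $\mathcal R_{\eta}$ is essentially the Ces\`aro operator. Let $f$ have coefficients $a_0=1$ and $a_k=(n+1)^{-1/2}2^{-n(1-1/p)}$ for $2^n\le k<2^{n+1}$. These are nonincreasing, and $\sum_k k^{p-2}a_k^p\asymp\sum_n (n+1)^{-p/2}<\infty$ because $p>2$. Hence $f\in H^p$ by the Hardy--Littlewood theorem on monotone coefficients.

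On the $n$-th block, the coefficients of $\mathcal R_{\eta}(f)$ and of $\mathcal H_g(f)$ are nonnegative and $\gtrsim (n+1)^{-1/2}2^{-n(1-1/p)}$. Evaluate at $z=1$ and use Nikol'skii's inequality $\|P\|_\infty\lesssim N^{1/p}\|P\|_{H^p}$ for polynomials with spectrum in an interval of length $N$. This gives $\|\Delta_{2^n}(\mathcal R_{\eta}f)\|_{H^p}\gtrsim (n+1)^{-1/2}$ and likewise $\|\Delta_{2^n}(\mathcal H_g f)\|_{H^p}\gtrsim (n+1)^{-1/2}$, which are not square summable.

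The same example shows where your individual steps break:
\begin{itemize}
\item The intra-block pieces are not $O(2^{-n/s})$; both pieces are $\gtrsim (n+1)^{-1/2}$ here.
\item The constant piece is exactly $c_n\Delta_{2^n}(g)$. Any hypothesis $g\in\Lambda^\sigma_{1/\sigma}$, converted to $H^p$ block norms by Young or Nikol'skii, only returns $\|\Delta_{2^n}g\|_{H^p}\lesssim 2^{-n/p}$. That is $\Lambda^p_{1/p}$ information, so the gap $s<p$ is never actually used. The same happens in your $\mathcal H_g$ plan.
\item The $\tilde H$ step borrowed from Theorem \ref{Hg Hardy}(i) rests on the left inequality in (\ref{Dpp-1}), which holds only for $p\le 2$.
\end{itemize}

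\textbf{The missing idea: use $s<p$ on the dual side.} For $h\in H^{p'}$ and $h^*(z)=\overline{h(\bar z)}$,
\[
\Big|\frac{1}{2\pi}\int_0^{2\pi}\mathcal H_g(f)(re^{i\theta})\,\overline{h(re^{i\theta})}\,d\theta\Big|\le\int_0^1|f(t)|\,\big|(g'\star h^*)(tr^2)\big|\,dt,
\qquad
|(g'\star h^*)(\rho^2)|\lesssim(1-\rho)^{\frac1s-1}M_{s'}(\rho,h).
\]
Apply H\"older in $t$ and Fej\'er--Riesz, $\int_0^1|f|^p\lesssim\|f\|_{H^p}^p$. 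What remains is
\[
\int_0^1(1-\rho)^{p'(\frac1s-1)}M_{s'}^{p'}(\rho,h)\,d\rho\lesssim\|h\|_{H^{p'}}^{p'} .
\]
This is Theorem \ref{HardyLittle} applied to $h\in H^{p'}$ with $\lambda=p'$ and target exponent $s'$, since $p'(\frac1{p'}-\frac1{s'})-1=p'(\frac1s-1)$. It requires $s'>p'$, which is exactly $s<p$.

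For $\mathcal R_{\eta}$, write $\mathcal R_{\eta}(f)(z)=\int_0^1(D(g)\star F)(tz)\,dt$. By (\ref{Mprsquare}), $|(D(g)\star F\star h^*)(\rho^3)|\le M_s(\rho,D(g))\,M_1(\rho,F)\,M_{s'}(\rho,h)$. Use $M_1(\rho,F)\lesssim(1-\rho)^{-1/\gamma}M_\gamma(\rho,f)$ with some $\gamma>p$. Then apply H\"older and Theorem \ref{HardyLittle} to both $f$ and $h$; the exponents match exactly.

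These bounds are linear in $\|g\|_{\Lambda^s_{1/s}}$. Your $S_Ng$ approximation via Theorem \ref{partialsumlamba} then yields the compactness statements.
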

It is interesting the subsequent observation about  $\mathcal R_{\eta} : H^p \to H^q$ which holds for any value of $p,q\in [1,\infty)$. A similar reformulation of the problem for the generalized Hilbert operator when acting as $\mathcal H_{g} : H^p \to H^p,\, p\in (1,\infty)$ can be found  in \cite{GP}.

If  $g(z)=\sum_{n=0}^\infty \eta_n z^n \in \hol(\mathbb D)$ and $f(z)=\sum_{k=0}^{\infty} a_k z^k \in H^p$ then
\begin{align*}
    \mathcal R_{\eta}(f) (z) &= \sum_{n=0}^{\infty} \eta_{n} \sum_{k=0}^n a_k z^n
    = \sum_{n=0}^{\infty} (n+1)\eta_{n} \( \frac{1}{n+1} \sum_{k=0}^n a_k \) z^n\\
    & = \sum_{n=0}^{\infty} (n+1)\eta_{n} \widehat{C(f)(n)} z^n\,,\,\,\,z\in \mathbb D
\end{align*}
where $\big\{\widehat{C(f)(n)}\big\}_{n\geq 0}=\big\{\frac{1}{n+1} \sum_{k=0}^n a_k \}_{n\geq 0}$ are the coefficients of the Ces\`{a}ro operator when acting on $H^{p}$. Then
\begin{align*}
    \mathcal R_{\eta}(f)(z) =   D(g) \star \mathcal C(f) (z)  \,,\,\,z\in \mathbb D
\end{align*}
where $D(g)(z)=\sum_{n=0}^{\infty} (n+1) \eta_{n} z^n \in \hol(\mathbb D)$.
\par As stated before, $\mathcal C \in \mathcal B(H^p)$ for any $p\in(0,\infty)$. Moreover,  the Ces\`{a}ro operator is injective \cite{CR}. Therefore the range $\mathcal C(H^p), 1\leq p<\infty,$ of the Ces\`{a}ro operator becomes a Banach space if it is employed with the norm
$$
\|\mathcal C(f)\|=\|f\|_{H^p}\,.
$$
We consider as
$
(\mathcal C(H^p), H^q)
$ the collection of those $h(z)=\sum_{n=0}^{\infty} h_n z^n \in \hol(\mathbb D)$ such that
$$
h\star \mathcal C(f) (z) = \sum_{n=0}^{\infty} h_n \( \frac{1}{n+1} \sum_{k=0}^n a_k \)  z^n \in H^q
$$
when $f\in H^p.$ In other words, $
(\mathcal C(H^p), H^q)$ consists of the coefficient(Hadamard) multipliers of the range $\mathcal C(H^p)$ of the Ces\`{a}ro operator to $H^q$.
\begin{proposition}\label{mult}
Let  $g(z)=\sum_{n=0}^\infty \eta_n z^n \in \hol(\mathbb D)$ and $1 \leq p, q<\infty$. The following are equivalent.\\
$(i)\,\, \mathcal R_{\eta} \in \mathcal B (H^p,H^q)$\\
$(ii)\,\,D(g) \in (\mathcal C(H^p), H^q) $
    \end{proposition}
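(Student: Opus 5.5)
The plan is to turn the identity $\mathcal R_{\eta}(f)=D(g)\star\mathcal C(f)$, established just before the statement, into an equivalence of operator statements. The only functional-analytic input is the Banach structure on $\mathcal C(H^p)$, namely $\|\mathcal C(f)\|=\|f\|_{H^p}$, which is well defined because $\mathcal C$ is injective \cite{CR}.

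First I verify the identity coefficientwise. For $f(z)=\sum a_kz^k\in H^p$, the $n$-th Taylor coefficient of $D(g)\star\mathcal C(f)$ is
$$(n+1)\eta_n\cdot\frac{1}{n+1}\sum_{k=0}^na_k=\eta_n\sum_{k=0}^n a_k,$$
which is exactly the $n$-th coefficient of $\mathcal R_{\eta}(f)$. Both sides define analytic functions in $\mathbb D$, because $g\in\hol(\mathbb D)$ and $\mathcal C(f)\in H^p$. Hence $\mathcal R_{\eta}(f)=D(g)\star\mathcal C(f)$ for every $f\in H^p$.

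$(i)\Rightarrow(ii)$. If $\mathcal R_{\eta}\in\mathcal B(H^p,H^q)$, then for each $f\in H^p$ we have $D(g)\star\mathcal C(f)=\mathcal R_\eta(f)\in H^q$. Since every element of $\mathcal C(H^p)$ has the form $\mathcal C(f)$, this says precisely that $D(g)\in(\mathcal C(H^p),H^q)$. Moreover, the multiplier $h\mapsto D(g)\star h$ from $\mathcal C(H^p)$ to $H^q$ is bounded, since
$$\|D(g)\star\mathcal C(f)\|_{H^q}=\|\mathcal R_\eta f\|_{H^q}\lesssim\|f\|_{H^p}=\|\mathcal C(f)\|.$$

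$(ii)\Rightarrow(i)$. Assume $D(g)\star\mathcal C(f)\in H^q$ for every $f\in H^p$. Then $\mathcal R_\eta$ maps $H^p$ into $H^q$ and is linear, so only boundedness remains. This is the step that needs an argument: membership in $(\mathcal C(H^p),H^q)$ is defined only qualitatively. I will use the closed graph theorem on $\mathcal R_\eta:H^p\to H^q$, both spaces being Banach because $p,q\ge1$.

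Suppose $f_j\to f$ in $H^p$ and $\mathcal R_\eta f_j\to h$ in $H^q$. Convergence in $H^p$ implies convergence of each Taylor coefficient, since $|a_k|\le\|f\|_{H^p}$ for $p\ge1$. Therefore each coefficient $\eta_n\sum_{k\le n}a_k^{(j)}$ of $\mathcal R_\eta f_j$ converges to the corresponding coefficient of $\mathcal R_\eta f$. Convergence in $H^q$ likewise forces coefficientwise convergence to the coefficients of $h$. Hence $h=\mathcal R_\eta f$, the graph is closed, and $\mathcal R_\eta$ is bounded.

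Equivalently, the same closed-graph argument applied to the multiplier map on the Banach space $\mathcal C(H^p)$ shows that every element of $(\mathcal C(H^p),H^q)$ acts boundedly. Either way, $\mathcal R_\eta\in\mathcal B(H^p,H^q)$, which completes the equivalence.
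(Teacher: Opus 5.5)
Your proof is correct and follows the paper's route: the coefficient identity $\mathcal R_{\eta}(f)=D(g)\star\mathcal C(f)$ combined with the norm $\|\mathcal C(f)\|=\|f\|_{H^p}$ on the range, and your $(i)\Rightarrow(ii)$ is essentially the paper's argument. The one difference is in $(ii)\Rightarrow(i)$: there the paper just writes $\|D(g)\star\mathcal C(f)\|_{H^q}\lesssim\|\mathcal C(f)\|_{H^p}\lesssim\|f\|_{H^p}$, tacitly treating the qualitatively defined multiplier as bounded, whereas your closed graph argument (on $\mathcal R_\eta:H^p\to H^q$, or equivalently on the Banach space $(\mathcal C(H^p),\|\cdot\|)$) actually proves the needed bound $\|\mathcal R_\eta(f)\|_{H^q}\lesssim\|f\|_{H^p}$, so your version is the more complete one.
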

\begin{proof}
    $(ii) \Rightarrow (i) : $ Assume that $D(g) \in (\mathcal C(H^p), H^q)$. Then
    \begin{align*}
        \|\mathcal R_{\eta} (f)\|_{H^q} &=\|D(g) \star \mathcal C(f)\|_{H^q}\\ &\lesssim \|\mathcal C(f)\|_{H^p} \lesssim \|f\|_{H^p}\,,\quad f \in H^p\,.
    \end{align*}
$(i) \Rightarrow (ii) : $  Since the Ces\`{a}ro is bounded and injective on $H^p$ for each $h \in \mathcal C(H^p) $ there is a unique $f\in H^p$ such that $h=\mathcal C(f)\,.$
Suppose that
$\mathcal R_{\eta} \in \mathcal B (H^p,H^q)$. Then
\begin{align*}
    \|D(g) \star h\|_{H^q}&=\|D(g) \star \mathcal C(f)\|_{H^q}
    \lesssim \|f\|_{H^p}\\
    &  = \|\mathcal C(f)\|=\|h\| \,,\quad h\in \mathcal C(H^p)\,.
\end{align*}
\end{proof}
According to the results presented  in the previous sections we are allowed to state that when $1<p \leq q \le 2$ or $1<p\leq 2 <q <\infty$
$$
D(g) \in (\mathcal C(H^p), H^q) \Leftrightarrow g \in \Lambda^q_{\frac 1p}\,.
$$
\par\medskip
{\bf Data Availability.} All data generated or analyzed during this
study are included in this article and in its bibliography

\par\medskip
{\bf Conflict of interest.} The authors declare that there is no
conflict of interest.

\par
\end{document}